\newtheorem{theorem}{Theorem}[section]
\newtheorem{corollary}[theorem]{Corollary}
\newtheorem{proposition}[theorem]{Proposition}
\newtheorem{lemma}[theorem]{Lemma}
\newtheorem{question}[theorem]{Question}
\newtheorem{problem}[theorem]{Problem}
\theoremstyle{definition}
\newtheorem{definition}[theorem]{Definition}
\newtheorem{remark}[theorem]{Remark}
\newtheorem{note}[theorem]{Note}
\def\N{{\mathbb N}}
\begin{document}
\title[The $k_{R}$-property of free Abelian topological groups and products of sequential fans]
{The $k_{R}$-property of free Abelian topological groups and products of sequential fans}

  \author{Fucai Lin}
  \address{(Fucai Lin): School of mathematics and statistics,
  Minnan Normal University, Zhangzhou 363000, P. R. China}
  \email{linfucai2008@aliyun.com; linfucai@mnnu.edu.cn}

\author{Shou Lin}
\address{(Shou Lin): School of mathematics and statistics,
  Minnan Normal University, Zhangzhou 363000, P. R. China; Institute of Mathematics, Ningde Teachers' College, Ningde, Fujian
352100, P. R. China} \email{shoulin60@163.com}

  \author{Chuan Liu}
\address{(Chuan Liu): Department of Mathematics,
Ohio University Zanesville Campus, Zanesville, OH 43701, USA}
\email{liuc1@ohio.edu}

  \thanks{The first author is supported by the NSFC (Nos. 11571158, 11471153), the Natural Science Foundation of Fujian Province (Nos. 2017J01405, 2016J05014, 2016J01671, 2016J01672) of China and the Program for New Century Excellent Talents in Fujian Province University.}

  \keywords{$k_{R}$-space; $k$-space; stratifiable space; La\v{s}nev space; $k$-network; free Abelian topological group.}%insert keywords
  \subjclass[2000]{Primary 54H11, 22A05; Secondary  54E20; 54E35; 54D50; 54D55.}%insert subject class

  %\date{\today}
  \begin{abstract}
A space $X$ is called a $k_{R}$-space, if
$X$ is Tychonoff and the necessary and sufficient condition for a real-valued
function $f$ on $X$ to be continuous is that the restriction of $f$ to each compact subset is
continuous. In this paper, we discuss the $k_{R}$-property of products of sequential fans and free Abelian topological groups by applying the $\kappa$-fan introduced by Banakh. In particular, we prove the following two results:

(1) The space $S_{\omega_{1}}\times S_{\omega_{1}}$ is not a $k_{R}$-space.

(2)  The space $S_{\omega}\times S_{\omega_{1}}$ is a $k_{R}$-space if and only if $S_{\omega}\times S_{\omega_{1}}$ is a $k$-space if and only if $\mathfrak b>\omega_1$.

These results generalize some well-known results on sequential fans. Furthermore, we generalize some results of Yamada on the free Abelian topological groups by applying the above results. Finally, we pose some open questions about the $k_{R}$-spaces.
  \end{abstract}

  \maketitle

\section{Introduction}
A topological space $X$ is called a {\it $k$-space} if every
subset of $X$, whose intersection with every compact subset $K$ in $X$ is relatively
open in $K$, is open in $X$. It is well-known that the $k$-property which generalizes
metrizability has been studied intensively by topologists and analysts. A space $X$ is called a {\it $k_{R}$-space}, if
$X$ is Tychonoff and the necessary and sufficient condition for a real-valued
function $f$ on $X$ to be continuous is that the restriction of $f$ to each compact subset is
continuous. Clearly every Tychonoff $k$-space is a $k_{R}$-space. The converse
is false. Indeed, for any non-measurable cardinal $\kappa$ the power $\mathbb{R}^{\kappa}$ is a $k_{R}$-space but not a $k$-space, see \cite[Theorem 5.6]{No1970} and \cite[Problem 7.J(b)]{K1975}. Now, the $k_{R}$-property has been widely used in the study of topology, analysis and category, see \cite{BG2014, B2016, Bl1977, B1981, L2006, Lin1991, M1973}.

The results of our research will be presented in two separate papers. In the current paper, we extend some well-known results on $k$-spaces to $k_{R}$-spaces by applying the $\kappa$-fan introduced by Banakh in \cite{B2016}, and then seek some applications in the study of free Abelian topological groups. In the subsequent paper \cite{LLL2016}, we study the $k_{R}$-property in free topological groups.

Let $\kappa$ be an infinite cardinal.
  For each $\alpha\in\kappa$, let $T_{\alpha}$ be a sequence converging to
  $x_{\alpha}\not\in T_{\alpha}$. Let $T=\bigoplus_{\alpha\in\kappa}(T_{\alpha}\cup\{x_{\alpha}\})$ be the topological sum of $\{T_{\alpha}
  \cup \{x_{\alpha}\}: \alpha\in\kappa\}$. Then
  $S_{\kappa}=\{x\}  \cup \bigcup_{\alpha\in\kappa}T_{\alpha}$
  is the quotient space obtained from $T$ by
  identifying all the points $x_{\alpha}\in T$ to the point $x$. The space $S_{\kappa}$ is called a {\it sequential fan}.
Throughout this paper, for convenience we denote $S_{\omega}$ and $S_{\omega_{1}}$ by the following respectively:

$S_{\omega}=\{a_{0}\}\cup\{a(n, m): n, m\in\omega\}$, where for each $n\in\omega$ the sequence $a(n, m)\rightarrow a_{0}$ as $m\rightarrow\infty$;

$S_{\omega_{1}}=\{\infty\}\cup\{x(\alpha, n): n\in\omega, \alpha\in\omega_{1}\}$, where for each $\alpha\in\omega_{1}$ the sequence $x(\alpha, n)\rightarrow \infty$ as $n\rightarrow\infty$.

The paper is organized as follows. In Section 2, we introduce the necessary notation and terminologies which are
  used for the rest of the paper. In Section 3, we investigate the
  $k_{R}$-property of products of sequential fans. First, we prove that $S_{\omega_{1}}\times S_{\omega_{1}}$ is not a $k_{R}$-space, which generalizes a well-konwn result of Gruenhage and Tanaka. Then we prove that $S_{\omega}\times S_{\omega_{1}}$ is a $k_{R}$-space if and only if $S_{\omega}\times S_{\omega_{1}}$ is a $k$-space if and only if $\mathfrak b>\omega_1$. Furthermore, we discuss the topological properties of some class of spaces with the $k_{R}$-property under the assumption of $\mathfrak b\leq\omega_1$.
  Section 4 is devoted to the study of the $k_{R}$-property of free Abelian topological groups. The main theorems in this section generalizes some results in \cite{LL2016} and \cite{Y1993}. In section 5, we pose some questions about $k_{R}$-spaces.

  %%%%%%%%%%%%%%%%%%%%%%%%%%%%%%%%%%%%%%%%%%%%%%%%%%%%%%%%%%%%%%%%%%%%%%%
  \section{Preliminaries}
In this section, we introduce the necessary notation and terminologies. Throughout this paper, all topological spaces are assumed to be
  Tychonoff, unless otherwise is explicitly stated.
  First of all, let $\N$ be the set of all positive
  integers and $\omega$ the first infinite ordinal. For a space $X$, we always denote the set of all the non-isolated points by $\mbox{NI}(X)$. For undefined
  notation and terminologies, the reader may refer to \cite{AT2008},
  \cite{E1989}, \cite{Gr1984} and \cite{Lin2015}.

  \medskip
  Let $X$ be a topological space and $A \subseteq X$ be a subset of $X$.
  The \emph{closure} of $A$ in $X$ is denoted by $\overline{A}$ and the
  \emph{diagonal} of $X$ is denoted by $\Delta_{X}$. Moreover, $A$ is called \emph{bounded} if every continuous real-valued
  function $f$ defined on $A$ is bounded.
  Recall that $X$ is said to have a \emph{$G_{\delta}$-diagonal} if $\Delta_{X}$ is a $G_\delta$-set
 in $X \times X$. The space $X$ is called a
  \emph{$k$-space} provided that a subset $C\subseteq X$ is closed in $X$ if
  $C\cap K$ is closed in $K$ for each compact subset $K$ of $X$. If there exists
  a family of countably many compact subsets $\{K_{n}: n\in\mathbb{N}\}$ of
  $X$ such that each subset $F$ of $X$ is closed in $X$ provided that
  $F\cap K_{n}$ is closed in $K_{n}$ for each $n\in\mathbb{N}$, then $X$ is called a \emph{$k_{\omega}$-space}.
A space $X$ is called a $k_{R}$-space, if
$X$ is Tychonoff and the necessary and sufficient condition for a real-valued
function $f$ on $X$ to be continuous is that the restriction of $f$ on each compact subset is
continuous. Note that every $k_{\omega}$-space is a $k$-space and every Tychonoff $k$-space is a $k_{R}$-space. A subset $P$ of $X$ is called a
  \emph{sequential neighborhood} of $x \in X$, if each
  sequence converging to $x$ is eventually in $P$. A subset $U$ of
  $X$ is called \emph{sequentially open} if $U$ is a sequential neighborhood of
  each of its points. A subset $F$ of
  $X$ is called \emph{sequentially closed} if $X\setminus F$ is sequentially open. The space $X$ is called a \emph{sequential  space} if each
  sequentially open subset of $X$ is open. The space $X$ is said to be {\it Fr\'{e}chet-Urysohn} if, for
each $x\in \overline{A}\subset X$, there exists a sequence
$\{x_{n}\}$ in $A$ such that $\{x_{n}\}$ converges to $x$.

A space $X$ is called an \emph{ $S_{2}$}-{space} ({\it Arens' space})  if
$$X=\{\infty\}\cup \{x_{n}: n\in \mathbb{N}\}\cup\{x_{n, m}: m, n\in
\omega\}$$ and the topology is defined as follows: Each
$x_{n, m}$ is isolated; a basic neighborhood of $x_{n}$ is
$\{x_{n}\}\cup\{x_{n, m}: m>k\}$, where $k\in\omega$;
a basic neighborhood of $\infty$ is $$\{\infty\}\cup (\bigcup\{V_{n}:
n>k\ \mbox{for some}\ k\in \omega\}),$$ where $V_{n}$ is a
neighborhood of $x_{n}$ for each $n\in\omega$.

  \begin{definition}\cite{B2016}
  Let $X$ be a topological space.

  $\bullet$ A subset $U$ of $X$ is called {\it $\mathbb{R}$-open} if for each point $x\in U$ there is a continuous function $f: X\rightarrow [0, 1]$ such that $f(x)=1$ and $f(X\setminus U)\subset\{0\}$. It is obvious that each $\mathbb{R}$-open set is open. The converse is true for the open subsets of Tychonoff spaces.

  $\bullet$ A subset $U$ of $X$ is called a {\it functional neighborhood} of a set $A\subset X$ if there is a continuous function $f: X\rightarrow [0, 1]$ such that $f(A)\subset\{1\}$ and $f(X\setminus U)\subset\{0\}$. If $X$ is normal, then each neighborhood of a closed subset $A\subset X$ is functional.
  \end{definition}

  \begin{definition}
  Let $\lambda$ be a cardinal. An indexed family $\{X_{\alpha}\}_{\alpha\in\lambda}$ of subsets of a topological space $X$ is called

 $\bullet$ {\it point-countable} if for any point $x\in X$ the set $\{\alpha\in\lambda: x\in X_{\alpha}\}$ is countable;

 $\bullet$ {\it compact-countable} if for any compact subset $K$ in $X$ the set $\{\alpha\in\lambda: K\cap X_{\alpha}\neq\emptyset\}$ is countable;

  $\bullet$ {\it locally finite} if any point $x\in X$ has a neighborhood $O_{x}\subset X$ such that the set $\{\alpha\in\lambda: O_{x}\cap X_{\alpha}\neq\emptyset\}$ is finite;

  $\bullet$ {\it compact-finite} in $X$ if for each compact subset $K\subset X$ the set $\{\alpha\in\lambda: K\cap X_{\alpha}\neq\emptyset\}$ is finite;

  $\bullet$ {\it strongly compact-finite} \cite{B2016} in $X$ if each set $X_{\alpha}$ has an $\mathbb{R}$-open neighborhood $U_{\alpha}\subset X$ such that the family $\{U_{\alpha}\}_{\alpha\in\lambda}$ is compact-finite;

  $\bullet$ {\it strictly compact-finite} \cite{B2016} in $X$ if each set $X_{\alpha}$ has a functional neighborhood $U_{\alpha}\subset X$ such that the family $\{U_{\alpha}\}_{\alpha\in\lambda}$ is compact-finite.
  \end{definition}

 \begin{definition}\cite{B2016}
  Let $X$ be a topological space and $\lambda$ be a cardinal. An indexed family $\{F_{\alpha}\}_{\alpha\in\lambda}$ of subsets of a topological space $X$ is called a {\it fan} (more precisely, a $\lambda$-fan) in $X$ if this family is compact-finite but not locally finite in $X$. A fan $\{X_{\alpha}\}_{\alpha\in\lambda}$ is called {\it strong} (resp. {\it strict}) if each set $F_{\alpha}$ has a $\mathbb{R}$-open neighborhood (resp. functional neighborhood) $U_{\alpha}\subset X$ such that the family $\{U_{\alpha}\}_{\alpha\in\lambda}$ is compact-finite in $X$.

  If all the sets $F_{\alpha}$ of a $\lambda$-fan $\{F_{\alpha}\}_{\alpha\in\lambda}$ belong to some fixed family $\mathscr{F}$ of subsets of $X$, then the fan will be called an {\it $\mathscr{F}^{\lambda}$-fan}. In particular, if each $F_{\alpha}$ is closed in $X$, then the fan will be called a {\it Cld$^{\lambda}$-fan}.
  \end{definition}

  Clearly, we have the following implications:

  $$\mbox{strict fan}\ \Rightarrow\ \mbox{strong fan}\ \Rightarrow\ \mbox{fan}.$$

  \medskip
  Let $\mathscr P$ be a family of subsets of a space $X$. Then, $\mathscr P$ is called a {\it $k$-network}
   if for every compact subset $K$ of $X$ and an arbitrary open set
  $U$ containing $K$ in $X$ there is a finite subfamily $\mathscr {P}^{\prime}\subseteq
  \mathscr {P}$ such that $K\subseteq \bigcup\mathscr {P}^{\prime}\subseteq U$. Recall that a space $X$ is an \emph{$\aleph$-space} (resp.
  {\it $\aleph_{0}$-space}) if
  $X$ has a $\sigma$-locally finite (resp. countable) $k$-network. Recall that a space $X$ is said to be
  \emph{La\v{s}nev} if it is the continuous closed image of some metric space. We list two well-known facts about the La\v{s}nev spaces as follows.

 \medskip
 {\bf Fact 1:} A La\v{s}nev space is metrizable if it contains no closed copy of $S_\omega$, see \cite{Liu1992}.

 \medskip
 {\bf Fact 2:} A La\v{s}nev space is an $\aleph$-space if it contains no closed copy of $S_{\omega_1}$, see \cite{F1985} and \cite{JY1992}.

 \begin{definition}\cite{C1961}
A topological space $X$ is a {\it stratifiable space} if for each open subset $U$ in $X$, one can assign a sequence $\{U_{n}\}_{n=1}^{\infty}$ of open subsets in $X$ such that the following (a)-(c) hold.

(a) $\overline{U_{n}}\subset U$;

(b) $\bigcup_{n=1}^{\infty}U_{n}=U;$

(c) $U_{n}\subset V_{n}$ whenever $U\subset V$.
\end{definition}

Clearly, each La\v{s}nev space is stratifiable \cite{Gr1984}.

\begin{definition}
Let $X$ be a Tychonoff space. An Abelian topological group $A(X)$ is called {\it the free Abelian topological group} over $X$ if $A(X)$ satisfies the following conditions:

\smallskip
(i) there is a continuous mapping $i: X\rightarrow A(X)$ such that $i(X)$ algebraically generates $A(X)$;

\smallskip
(ii) if $f: X\rightarrow G$ is a continuous mapping to an Abelian topological group $G$, then there exists a continuous homomorphism $\overline{f}: A(X)\rightarrow G$ with $f=\overline{f}\circ i$.
\end{definition}

 \medskip
  Let $X$ be a non-empty Tychonoff space. Throughout this paper, $-X=\{-x: x\in X\}$, which is just a copy of
  $X$. Let $0$ be the neutral element of $A(X)$, that is, the empty
  word. For every $n\in\N$, an element
  $x_{1}+x_{2}+\cdots +x_{n}$ is also called a {\it form} for $(x_{1}, x_{2},
  \cdots, x_{n})\in(X\oplus -X\oplus\{0\})^{n}$. The word $g$ is
  called {\it reduced} if it does not contain any pair of
  symbols of $x$ and $-x$. It follows
  that if the word $g$ is reduced and non-empty, then it is different
  from the neutral element $0$ of $A(X)$. Clearly, each element
  $g\in A(X)$ distinct from the neutral element can be uniquely written
  in the form $g=r_{1}x_{1}+r_{2}x_{2}+\cdots +r_{n}x_{n}$, where
  $n\geq 1$, $r_{i}\in\mathbb{Z}\setminus\{0\}$, $x_{i}\in X$, and
  $x_{i}\neq x_{j}$ for $i\neq j$. In this case, the number $\sum_{i=1}^{n}|r_{i}|$  is said to the {\it reduced length} of $g$ (in particular, the neutral element has the reduced length 0), and the {\it support}
  of $g=r_{1}x_{1}+ r_{2}x_{2}+\cdots +r_{n}x_{n}$ is defined as
  $\mbox{supp}(g) =\{x_{1}, \cdots, x_{n}\}$. Given a subset $K$ of
  $A(X)$, we put $\mbox{supp}(K)=\bigcup_{g\in K}\mbox{supp}(g)$. For every $n\in\mathbb{N}$, let $$i_n: (X\oplus -X
  \oplus\{0\})^{n} \to A_n(X)$$ be the natural mapping defined by
  $$i_n(x_1, x_2, ... x_n)= x_1+x_2+...+x_n$$
  for each $(x_1, x_2, ... x_n) \in (X\oplus -X
  \oplus\{0\})^{n}$.

  Let $X$ be a space. For every $n\in\mathbb{N}$, $A_{n}(X)$ denotes the
  subspace of $A(X)$ that consists of all the words of reduced length at
  most $n$ with respect to the free basis $X$.

 The reader may refer to \cite{Ro1996} for undefined notation
and terminologies of free groups.

  %%%%%%%%%%%%%%%%%%%%%%%%%%%%%%%%%%%%%%%%%%%%%%%%%%%%%%%%%%%%%%%%%%%%%%%
  \section{The $k_{R}$-property in sequential fans}
  %%%%%%%%%%%%%%%%%%%%%%%%%%%%%%%%%%%%%%%%%%%%%%%%%%%%%%%%%%%%%%%%%%%%%%%
In this section we discuss the $k_{R}$-property of products of sequential fans and generalize some well-known results.
First, we recall a well-known theorem of Gruenhage and Tanaka as follows:

\begin{theorem}\cite[Corollary 7.6.23]{AT2008}
The product $S_{\omega_{1}}\times S_{\omega_{1}}$ is not a $k$-space.
\end{theorem}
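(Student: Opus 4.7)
The plan is to exhibit a subset $F \subseteq S_{\omega_1} \times S_{\omega_1}$ with $(\infty,\infty) \in \overline{F} \setminus F$ but $F \cap K$ closed in $K$ for every compact $K$; such an $F$ directly witnesses that $S_{\omega_1} \times S_{\omega_1}$ is not a $k$-space.

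First I would pin down the compact subsets. Writing $T_\alpha := \{x(\alpha,n) : n \in \omega\}$, the basic open neighborhoods of $\infty$ in $S_{\omega_1}$ are the sets $U_\phi = \{\infty\} \cup \bigcup_{\alpha < \omega_1}\{x(\alpha,n) : n > \phi(\alpha)\}$ indexed by $\phi \in \omega^{\omega_1}$. A diagonal test against these neighborhoods shows that a sequence in $S_{\omega_1}$ converging to $\infty$ is eventually supported on finitely many of the $T_\alpha$'s; hence every compact $K \subseteq S_{\omega_1}$ containing $\infty$ is contained in some finite subfan $S_A := \{\infty\} \cup \bigcup_{\alpha \in A}T_\alpha$ with $A$ finite. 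Each $T_\alpha$ is clopen in $S_{\omega_1} \setminus \{\infty\}$, so its indicator function is continuous on $S_{\omega_1}$, giving $T_\alpha$ a functional neighborhood equal to itself; this makes $\{T_\alpha\}_{\alpha < \omega_1}$ a strict Cld-$\omega_1$-fan in Banakh's sense, and the product family $\{T_\alpha \times T_\beta\}_{(\alpha,\beta)}$ is then compact-finite in $S_{\omega_1} \times S_{\omega_1}$. Thus every compact $K \subseteq S_{\omega_1}^2$ sits inside a metrizable product $S_A \times S_B$ with $A,B$ finite.

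Next I would assemble $F$ by choosing, for each pair $(\alpha,\beta) \in \omega_1 \times \omega_1$, a finite ``packet'' $F_{\alpha,\beta} \subseteq T_\alpha \times T_\beta$, and setting $F := \bigcup_{(\alpha,\beta)}F_{\alpha,\beta}$. Compact-finiteness of $\{T_\alpha \times T_\beta\}$ together with finiteness of each packet forces $F \cap K$ to be finite, hence closed, in every compact $K$. The packets must be chosen so that every neighborhood $U_\phi \times U_\psi$ of $(\infty,\infty)$ meets $F$: concretely, for every pair $(\phi,\psi) \in \omega^{\omega_1} \times \omega^{\omega_1}$, some $F_{\alpha,\beta}$ contains a point $(x(\alpha,n),x(\beta,m))$ with $n > \phi(\alpha)$ and $m > \psi(\beta)$. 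Once this is arranged, $(\infty,\infty) \in \overline{F} \setminus F$ while $F \cap K$ is closed in $K$ for every compact $K$, completing the argument.

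The main difficulty is the combinatorial choice of the packets. They must be simultaneously sparse---each $F_{\alpha,\beta}$ finite, so that $F \cap (S_A \times S_B)$ stays finite---and rich enough to beat every neighborhood-coding pair $(\phi,\psi)$. A naive ``single dominating point'' assignment $(\alpha,\beta) \mapsto (N_{\alpha,\beta},M_{\alpha,\beta})$ cannot succeed, because any such matrix is pointwise dominated by a suitable $(\phi,\psi)$; one must exploit both the multiplicity of packets and the uncountable width of the index $\omega_1$, which is where the $\kappa$-fan machinery of Banakh enters. The contrast with $S_\omega \times S_\omega$---a $k_\omega$-space admitting no such $F$---confirms that the jump from $\omega$ to $\omega_1$ is essential.
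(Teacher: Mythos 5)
Your overall architecture is the right one (find $F$ with $(\infty,\infty)\in\overline{F}\setminus F$ meeting every compact set in a finite set, using that compacta of $S_{\omega_1}^2$ live in products of finite subfans), and it is essentially the architecture behind the result the paper cites. But the proof stops exactly where the theorem actually lives: you never construct the packets $F_{\alpha,\beta}$, and you say yourself that the "main difficulty is the combinatorial choice of the packets." That choice is the whole content of the statement. The standard construction (which the paper reuses in its Theorem~\ref{l2} to get the stronger $k_R$ version) takes, via \cite[Theorem 20.2]{JW1997}, two families $\mathscr{A}=\{A_{\alpha}:\alpha<\omega_1\}$ and $\mathscr{B}=\{B_{\beta}:\beta<\omega_1\}$ of infinite subsets of $\omega$ such that (a) $A_{\alpha}\cap B_{\beta}$ is finite for all $\alpha,\beta$, and (b) no single $A\subseteq\omega$ makes all $A_{\alpha}\setminus A$ and $B_{\beta}\cap A$ finite (a Luzin-type gap, available in ZFC), and then sets $F_{\alpha,\beta}=\{(x(\alpha,n),x(\beta,n)):n\in A_{\alpha}\cap B_{\beta}\}$. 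Condition (a) gives the finiteness of each packet (your sparsity requirement), and condition (b) is precisely what defeats every neighborhood pair $(\phi,\psi)$, i.e.\ forces $(\infty,\infty)\in\overline{\bigcup_{\alpha,\beta}F_{\alpha,\beta}}$; this last verification (the argument of \cite[Lemma 7.6.22]{AT2008}) is itself a nontrivial step you would still have to supply. Without producing such families and checking (b) does the job, the claim "the packets can be chosen rich enough" is just a restatement of the theorem.

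A secondary point: your appeal to "the $\kappa$-fan machinery of Banakh" to resolve the combinatorics is misplaced. In this paper Banakh's machinery (Lemma~\ref{strict 1}: a $k_R$-space contains no strict Cld-fan) is used only to upgrade the conclusion from "not a $k$-space" to "not a $k_R$-space"; it supplies no combinatorial existence result. The existence of the required almost disjoint, non-separated families comes from set theory (Just--Weese), not from fan theory, and for the plain $k$-space statement you do not need fans at all --- the set $F=\bigcup_{\alpha,\beta}F_{\alpha,\beta}$ directly violates the $k$-space property once (a) and (b) are in hand.
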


Next we generalize this theorem and prove that the product $S_{\omega_{1}}\times S_{\omega_{1}}$ is not a $k_{R}$-space. First of all, we give an important lemma.

\begin{lemma}\label{strict 1}\cite[Proposition 3.2.1]{B2016}
A $k_{R}$-space $X$ contains no strict Cld-fan.
\end{lemma}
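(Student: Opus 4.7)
The plan is a proof by contradiction. Suppose $\{F_\alpha\}_{\alpha\in\lambda}$ is a strict Cld-fan in the $k_R$-space $X$, with witnessing functional neighborhoods $\{U_\alpha\}_{\alpha\in\lambda}$ such that the family $\{U_\alpha\}_{\alpha\in\lambda}$ is compact-finite in $X$. For each $\alpha$ pick a continuous $f_\alpha\colon X\to[0,1]$ with $f_\alpha(F_\alpha)\subset\{1\}$ and $f_\alpha(X\setminus U_\alpha)\subset\{0\}$. My goal is to assemble these $f_\alpha$'s into a single real-valued function on $X$ that is continuous on every compact set but discontinuous at some point, contradicting the definition of a $k_R$-space.

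The candidate is the pointwise sum $f(x)=\sum_{\alpha\in\lambda}f_\alpha(x)$. Since singletons are compact, applying compact-finiteness of $\{U_\alpha\}$ to $\{x\}$ shows that $\{\alpha:x\in U_\alpha\}$ is finite for every $x\in X$, so $f_\alpha(x)\ne 0$ for only finitely many $\alpha$ and $f(x)$ is a well-defined real number. Moreover, for every compact $K\subset X$ the set $S_K=\{\alpha:K\cap U_\alpha\ne\emptyset\}$ is finite and $f_\alpha|_K\equiv 0$ for $\alpha\notin S_K$, so $f|_K=\sum_{\alpha\in S_K}f_\alpha|_K$ is a finite sum of continuous functions, hence continuous on $K$.

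For the failure of continuity on $X$, I use that $\{F_\alpha\}_{\alpha\in\lambda}$ is not locally finite: pick $x_0\in X$ such that every open neighborhood of $x_0$ meets $F_\alpha$ for infinitely many $\alpha$. Applying compact-finiteness once more to $\{x_0\}$, the set $F_0=\{\alpha:x_0\in U_\alpha\}$ is finite. Removing these finitely many indices yields a family $\{F_\alpha\}_{\alpha\in\lambda\setminus F_0}$ which remains compact-finite, remains strict with the same witnesses, and still fails to be locally finite at $x_0$ (deleting finitely many members from a family that is not locally finite at a point cannot make it locally finite there). Redefining $f$ with the new index set, I get $f_\alpha(x_0)=0$ for every surviving $\alpha$, hence $f(x_0)=0$. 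On the other hand, any neighborhood $V$ of $x_0$ contains a point $y\in V\cap F_\alpha$ for some $\alpha$, and then $f(y)\ge f_\alpha(y)=1$. Therefore $f$ is not continuous at $x_0$, although $f$ is continuous on every compact subset of $X$, contradicting the $k_R$-property and completing the proof.

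The main obstacle is not difficulty but bookkeeping: one must check carefully that the uncountable sum $\sum_\alpha f_\alpha$ is really finite pointwise and finite on compacta, and that the trick of discarding the finite set $F_0$ preserves the ``not locally finite'' condition at $x_0$. Both are immediate consequences of the compact-finiteness hypothesis packaged into the definition of a strict fan, so the argument is essentially a one-line construction of a bad test function from the witnesses built into the fan structure.
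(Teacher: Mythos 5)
Your proof is correct: the pointwise sum $f=\sum_{\alpha}f_{\alpha}$ of the witnessing functions is well defined and continuous on every compact set by compact-finiteness of $\{U_{\alpha}\}_{\alpha}$, and after discarding the finitely many $\alpha$ with $x_{0}\in U_{\alpha}$ it is discontinuous at the point $x_{0}$ where local finiteness fails, contradicting the $k_{R}$-property. The paper itself gives no proof but cites Banakh's Proposition 3.2.1, and your argument is essentially that standard one (it in fact never uses closedness of the $F_{\alpha}$, so it proves the slightly stronger statement that a $k_{R}$-space contains no strict fan at all).
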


Now we can prove one of the main results.

\begin{theorem}\label{l2}
The space $S_{\omega_{1}}\times S_{\omega_{1}}$ is not a $k_{R}$-space.
\end{theorem}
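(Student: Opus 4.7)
The plan is to invoke Lemma~\ref{strict 1}, so I would construct a strict Cld-$\omega_1$-fan in $S_{\omega_1}\times S_{\omega_1}$ whose cluster point is $(\infty,\infty)$. For each countably infinite ordinal $\beta<\omega_1$, fix a bijection $e_\beta\colon\omega\to\beta$, and for $\alpha<\beta<\omega_1$ set
\[
p_{\alpha,\beta}=\bigl(x(\alpha,e_\beta^{-1}(\alpha)),\,x(\beta,e_\beta^{-1}(\alpha))\bigr)\in S_{\omega_1}\times S_{\omega_1}.
\]
For each $\alpha<\omega_1$, let $F_\alpha=\{p_{\alpha,\beta}:\beta>\alpha\}$. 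Since each coordinate of every $p_{\alpha,\beta}$ is isolated in $S_{\omega_1}$, each $F_\alpha$ is an open set of isolated points.

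I would check four properties. First, each $F_\alpha$ is closed: the key observation is that for any $S\subseteq\omega_1$ and any $n\in\omega$ the set $\{x(\beta,n):\beta\in S\}$ is closed in $S_{\omega_1}$ (a basic neighbourhood of $\infty$ specified by $g\equiv n+1$ is disjoint from it); this, together with the diagonal choice $g(\beta)=e_\beta^{-1}(\alpha)+1$ for $\beta>\alpha$, rules out every candidate limit point of the forms $(x,\infty)$, $(\infty,y)$ and $(\infty,\infty)$. Second, $\{F_\alpha\}$ is compact-finite because the first coordinate of every point of $F_\alpha$ lies in $T_\alpha$, while $\pi_1(K)$ meets only finitely many spokes for compact $K\subseteq S_{\omega_1}^{2}$. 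Third, each $F_\alpha$ is clopen, so the characteristic function $\chi_{F_\alpha}\colon S_{\omega_1}^{2}\to[0,1]$ is continuous and exhibits $F_\alpha$ as a functional neighbourhood of itself; taking $U_\alpha=F_\alpha$ then makes $\{U_\alpha\}$ compact-finite and the fan strict.

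The main obstacle is the fourth property: $\{F_\alpha\}$ must fail to be locally finite at $(\infty,\infty)$. Given a basic neighbourhood $U\times V$ of $(\infty,\infty)$ determined by $f,g\colon\omega_1\to\omega$, the pigeonhole principle yields $k^*,l^*\in\omega$ with $A=f^{-1}(k^*)$ and $B=g^{-1}(l^*)$ both uncountable; set $M=\max(k^*,l^*)$. For each $\beta\in B$ with $|A\cap\beta|>M$ (which holds cofinally in $B$), the $M$-element set $e_\beta([0,M))$ cannot cover $A\cap\beta$, so some $\alpha\in A\cap\beta$ satisfies $e_\beta^{-1}(\alpha)\geq M$, forcing $p_{\alpha,\beta}\in U\times V$ and hence $F_\alpha\cap(U\times V)\neq\emptyset$. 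To upgrade existence to uncountability, suppose only countably many $\alpha\in A$ are "good"; then the uncountable complement $A'\subseteq A$ would satisfy $A'\cap\beta\subseteq e_\beta([0,M))$ for every $\beta\in B$, contradicting $|A'\cap\beta|>M$ for any sufficiently large $\beta\in B$. Hence $\{F_\alpha\}_{\alpha<\omega_1}$ is a strict Cld-$\omega_1$-fan, and Lemma~\ref{strict 1} delivers that $S_{\omega_1}\times S_{\omega_1}$ is not a $k_R$-space.
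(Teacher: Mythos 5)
Your proof is correct, but it takes a genuinely different route to the same key lemma. Like the paper, you reduce everything to Lemma~\ref{strict 1} (a $k_R$-space contains no strict Cld-fan), and your verifications all go through: each $F_\alpha$ consists of isolated points of the product and is closed by the diagonal choice $g(\beta)=e_\beta^{-1}(\alpha)+1$, hence clopen and a functional neighbourhood of itself; compact-finiteness follows since $\pi_1(K)$ meets only finitely many spokes and $\pi_1(F_\alpha)\subset T_\alpha$; and your pigeonhole-plus-counting argument at $(\infty,\infty)$ correctly shows every basic box $U_f\times V_g$ meets uncountably many $F_\alpha$ (only infinitely many are needed). The difference is in the combinatorial gadget: the paper builds a \emph{countable} strict Cld$^{\omega}$-fan $\{X_n\}_{n\in\mathbb{N}}$ from the Just--Weese pair of almost disjoint families $\mathscr{A},\mathscr{B}$ (with the gap property (b)), and its failure of local finiteness and compact-finiteness both lean on the fact, quoted from the proof of \cite[Lemma 7.6.22]{AT2008}, that $(\infty,\infty)$ lies in the closure of $\bigcup_n X_n$ but not in the closure of any countable subset of $S_{\omega_1}\times S_{\omega_1}$; you instead build an $\omega_1$-indexed fan from bijections $e_\beta\colon\omega\to\beta$, which is completely self-contained elementary ZFC combinatorics and needs no external citation. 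What the paper's choice buys is reuse: the same families $\mathscr{A},\mathscr{B}$ and the same sets $X_n$ are recycled later (in Theorem~\ref{A2} via $i_2(X_n)$ and in Proposition~\ref{M3} for $A_4(\mathbf{G}_1)$), whereas your fan serves only the present theorem; on the other hand your argument is arguably more transparent. (One trivial bookkeeping point: $e_\beta$ exists only for infinite $\beta$, so $F_\alpha$ should range over infinite $\beta>\alpha$, as you implicitly do.)
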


\begin{proof}
By Lemma~\ref{strict 1}, it suffices to prove that $S_{\omega_{1}}\times S_{\omega_{1}}$ contains a strict Cld$^{\omega}$-fan. Next we construct a strict Cld$^{\omega}$-fan in $S_{\omega_{1}}\times S_{\omega_{1}}$.

It follows from \cite[Theorem 20.2]{JW1997} that we can find two families $\mathscr{A}=\{A_{\alpha}: \alpha\in\omega_{1}\}$ and $\mathscr{B}=\{B_{\alpha}: \alpha\in\omega_{1}\}$ of infinite subsets of $\omega$ such that

(a) $A_{\alpha}\cap B_{\beta}$ is finite for all $\alpha, \beta<\omega_{1}$;

(b) for no $A\subset \omega$, all the sets $A_{\alpha}\setminus A$ and $B_{\alpha}\cap A$, $\alpha\in\omega_{1}$ are finite.

For each $n\in\mathbb{N}$, put $$X_{n}=\{(x(\alpha, n), x(\beta, n)): n\in A_{\alpha}\cap B_{\beta}, \alpha, \beta\in\omega_{1}\}.$$ It is obvious that $(\infty, \infty)\not\in \bigcup_{n\in\mathbb{N}}X_{n}$. However, it follows from the proof of \cite[Lemma 7.6.22]{AT2008} that  $(\infty, \infty)\in\overline{\bigcup_{n\in\mathbb{N}}X_{n}}$ but not for any countable subset of $\bigcup_{n\in\mathbb{N}}X_{n}$, which implies that the family $\{X_{n}: \in\mathbb{N}\}$ is not locally finite in $S_{\omega_{1}}\times S_{\omega_{1}}$. Moreover, it is easy to see that each $X_{n}$ is closed and discrete in $S_{\omega_{1}}\times S_{\omega_{1}}$. We claim that the family $\{X_{n}: n\in\mathbb{N}\}$ is compact-finite in $S_{\omega_{1}}\times S_{\omega_{1}}$. Indeed, let $K$ be an any compact subset of $S_{\omega_{1}}\times S_{\omega_{1}}$. It is easy to see that there exists a finite subset $\{\alpha_{i}\in\omega_{1}: i=1, \cdots, m\}$ of $\omega_{1}$ such that $$K\cap\bigcup_{n\in\mathbb{N}}X_{n}\subset (\bigcup_{i=1}^{m}\{x(\alpha_{i}, n): n\in\mathbb{N}\})\times(\bigcup_{i=1}^{m}\{x(\alpha_{i}, n): n\in\mathbb{N}\}).$$ Assume that $K$ intersects infinitely many $X_{n}$. Then there exist $1\leq i, j\leq m$ and an infinite subset $\{n_{k}: k\in \mathbb{N}\}$ in $\mathbb{N}$ such that $$\{x(\alpha_{i}, n_{k}), x(\alpha_{j}, n_{k}): k\in\mathbb{N}\}\subset K\cap (\bigcup_{n\in\mathbb{N}} X_{n}).$$ It is obvious that $$(\infty, \infty)\in\overline{\{x(\alpha_{i}, n_{k}), x(\alpha_{j}, n_{k}): k\in\mathbb{N}\}}.$$ However, since the point $(\infty, \infty)$ does not belong to the closure of any countable subset of $\bigcup_{n\in\mathbb{N}}X_{n}$ in $S_{\omega_{1}}\times S_{\omega_{1}}$, we obtain a contradiction.

Since $X_{n}$ is also an functional neighborhood of itself for each $n\in\mathbb{N}$, the space $S_{\omega_{1}}\times S_{\omega_{1}}$ contains a strict $\mathrm{Cld}^{\omega}$-fan $\{X_{n}: n\in\mathbb{N}\}$.
\end{proof}

Next we prove the second main result in this section. First, we recall some concepts.

Consider $^\omega\omega$, the collection of all functions from $\omega$ to $\omega$. For any $f, g\in ^\omega\omega$, define $f\leq g$ if $f(n)\leq g(n)$ for all but finitely many $n\in \omega$. A subset $\mathscr{H}$ of $^\omega\omega$ is {\it bounded} if there is a $g\in$$^\omega\omega$ such that $f\leq g$ for all $f\in\mathscr{H}$, and is {\it unbounded} otherwise. We denote by $\mathfrak b$ the smallest cardinality of an unbounded family in $^\omega\omega$. It is well-known that $\omega <\mathfrak b\leq \mathrm{c}$, where $\mathrm{c}$ denotes the cardinality of the continuum. Let $\mathscr{F}$ be the set of all finite subsets of $\omega$.

In \cite{Gr1980}, Gruenhage proved that $\omega_1<\mathfrak b$ if and only if the product $S_{\omega}\times S_{\omega_{1}}$ is a $k$-space. Indeed, we have the following result.

\begin{theorem}\label{BF}
The following statements are equivalent.
 \begin{enumerate}
\item $\mathfrak b>\omega_1$.
\item The product $S_{\omega}\times S_{\omega_{1}}$ is a $k_{R}$-space.
\item The product $S_{\omega}\times S_{\omega_{1}}$ is a $k$-space.
  \end{enumerate}
\end{theorem}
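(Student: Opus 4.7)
The plan is to prove the cycle $(1) \Rightarrow (3) \Rightarrow (2) \Rightarrow (1)$. The implication $(3) \Rightarrow (2)$ is immediate because every Tychonoff $k$-space is a $k_{R}$-space, while $(1) \Rightarrow (3)$ is Gruenhage's theorem from \cite{Gr1980}, recorded just before the statement. The substantive step is therefore $(2) \Rightarrow (1)$, which I plan to prove contrapositively: assuming $\mathfrak{b} = \omega_{1}$, I will exhibit a strict $\mathrm{Cld}^{\omega_{1}}$-fan in $S_{\omega} \times S_{\omega_{1}}$ and invoke Lemma~\ref{strict 1}.

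Under $\mathfrak{b} = \omega_{1}$, fix an unbounded family $\{f_{\alpha} : \alpha \in \omega_{1}\} \subset {}^{\omega}\omega$. For each $\alpha \in \omega_{1}$ put
$$F_{\alpha} = \{(a(n, f_{\alpha}(n)), x(\alpha, n)) : n \in \omega\}.$$
Every element of $F_{\alpha}$ is a product of two isolated points of the fans, so it is isolated in $S_{\omega} \times S_{\omega_{1}}$ and $F_{\alpha}$ is automatically open. To verify that $F_{\alpha}$ is also closed, I plan a short case analysis on points $(p,q) \notin F_{\alpha}$; the only delicate cases are $(a_{0}, \infty)$ and $(a_{0}, x(\alpha, n_{0}))$, and both are resolved by choosing a neighborhood of $a_{0}$ of the form $\{a_{0}\} \cup \{a(n, m) : m \geq g(n)\}$ with $g$ satisfying $g(n) > f_{\alpha}(n)$ at the relevant indices. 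Since $F_{\alpha}$ is then clopen, $\chi_{F_{\alpha}}$ is continuous and $F_{\alpha}$ serves as its own functional neighborhood.

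Compact-finiteness of the family is cheap: for any compact $K \subset S_{\omega} \times S_{\omega_{1}}$ the projection $\pi_{2}(K)$ is compact in $S_{\omega_{1}}$, hence contained in $\{\infty\} \cup T_{\alpha_{1}} \cup \cdots \cup T_{\alpha_{m}}$ for finitely many indices, so $K$ meets $F_{\alpha}$ only when $\alpha \in \{\alpha_{1}, \ldots, \alpha_{m}\}$. The crucial point is the failure of local finiteness at $(a_{0}, \infty)$: a basic neighborhood $U \times V$ of this point is encoded by a function $g : \omega \to \omega$ (for $U$) and a function $h : \omega_{1} \to \omega$ (for $V$), and $U \times V$ meets $F_{\alpha}$ iff there exists $n \geq h(\alpha)$ with $f_{\alpha}(n) \geq g(n)$. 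If only finitely many $\alpha$ met the neighborhood, then for all but finitely many $\alpha$ we would have $f_{\alpha}(n) < g(n)$ whenever $n \geq h(\alpha)$, hence $f_{\alpha} \leq g$ in the paper's "almost everywhere" sense; absorbing the finite exception set into a single function $g'$ dominating the finitely many exceptional $f_{\alpha_{i}}$ would then furnish a common bound for the whole family, contradicting unboundedness.

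The step I expect to be the main obstacle is this last translation of "only finitely many $F_{\alpha}$ meet $U \times V$" into an honest bound for the family, since that is precisely where the hypothesis $\mathfrak{b} = \omega_{1}$ pays off and demands careful bookkeeping about the finite exceptions. The closedness of $F_{\alpha}$ and the compact-finiteness of the family, by contrast, are routine once one writes out the neighborhood bases of $a_{0}$ and $\infty$. Once all ingredients are in place, $\{F_{\alpha}\}_{\alpha \in \omega_{1}}$ is a strict $\mathrm{Cld}^{\omega_{1}}$-fan and Lemma~\ref{strict 1} finishes the contrapositive, showing that $S_{\omega} \times S_{\omega_{1}}$ fails to be a $k_{R}$-space whenever $\mathfrak{b} = \omega_{1}$.
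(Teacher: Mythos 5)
Your proposal is correct and follows essentially the same route as the paper: reduce everything to $(2)\Rightarrow(1)$ via Gruenhage's theorem and the trivial implication, then under $\mathfrak b\leq\omega_1$ build from an unbounded family $\{f_\alpha\}_{\alpha<\omega_1}$ a compact-finite but not locally finite family of clopen sets in $S_\omega\times S_{\omega_1}$ (hence a strict Cld-fan, since clopen sets are their own functional neighborhoods) and invoke Lemma~\ref{strict 1}. The only differences are cosmetic: you take the graph sets $\{(a(n,f_\alpha(n)),x(\alpha,n)):n\in\omega\}$ where the paper takes the points on or below the graph, and you verify the failure of local finiteness at $(a_0,\infty)$ by the domination/bounding argument rather than by exhibiting the point in the closure of the union, which amounts to the same computation.
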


\begin{proof}
By Gruenhage's result, it suffices to prove (2) $\Rightarrow$ (1). Assume that $\mathfrak b\leq\omega_1$. Then there exists a subfamily $\{f_{\alpha}\in ^{\omega}\omega: \alpha<\omega_{1}\}$ such that for any $g\in ^{\omega}\omega$ there exists $\alpha<\omega_{1}$ such that $f_{\alpha}(n)>g(n)$ for infinitely many $n$'s. For each $\alpha<\omega_{1}$, put $$G_{\alpha}=\{(a(n, m), x(\alpha, m)): m\leq f_{\alpha}(n), n\in\omega\}.$$ Obviously, each $G_{\alpha}$ is clopen in $S_{\omega}\times S_{\omega_{1}}$. Moreover, it is easy to see that the family $\{G_{\alpha}\}_{\alpha<\omega_{1}}$ of subsets is compact-finite. However, the family $\{G_{\alpha}\}_{\alpha<\omega_{1}}$ is not locally finite at the point $(a_{0}, \infty)$ in $S_{\omega}\times S_{\omega_{1}}$. Indeed, since each $G_{\alpha}$ is clopen in $S_{\omega}\times S_{\omega_{1}}$ and $(a_{0}, \infty)\not\in G_{\alpha}$, it suffices to prove that $(a_{0}, \infty)\in \overline{\bigcup_{\alpha<\omega_{1}}G_{\alpha}}$. Take an arbitrary open neighborhood $U$ at $(a_{0}, \infty)$ in $S_{\omega}\times S_{\omega_{1}}$. Then there exists $f\in ^{\omega}\omega$ and $g\in ^{\omega_{1}}\omega$ such that $$(\{a_{0}\}\cup\{a(n, m): m\geq f(n), n\in\omega\})\times(\{\infty\}\cup\{x(\alpha, m): m\geq g(\alpha), \alpha<\omega_{1}\})\subset U.$$Therefore, there exists $\alpha<\omega_{1}$ such that $f_{\alpha}(n)>f(n)$ for infinitely many $n$'s, then there is a $j\in\omega$ such that $j\geq g(\alpha)$ and $f_{\alpha}(j)>f(j)$, which shows $(a(j, f_{\alpha}(j)), x(\alpha, j))\in U\cap G_{\alpha}$. By the arbitrary choice of $U$, the point $(a_{0}, \infty)$ is in $\overline{\bigcup_{\alpha<\omega_{1}}G_{\alpha}}$.

Since $S_{\omega}\times S_{\omega_{1}}$ is normal and each $G_{\alpha}$ is clopen in $S_{\omega}\times S_{\omega_{1}}$, each $G_{\alpha}$ is also a functional neighborhood of itself. Therefore, it follows from Lemma~\ref{strict 1} that $S_{\omega}\times S_{\omega_{1}}$ is not a $k_{R}$-space, which is a contradiction. Hence $\mathfrak b>\omega_1$.
\end{proof}

 \begin{note}If $\mathfrak b\leq\omega_1$, then some classes of spaces with the $k_{R}$-property have some special topological properties, see the following Theorem~\ref{kr charac}. In order to prove this theorem, we need some concepts and technique lemmas.
Yamada in \cite{Y1993} introduced the following spaces.
\end{note}

\smallskip
Let $\mathfrak{T}$ be a class of metrizable spaces such that each element $P$ of $\mathfrak{T}$ can be represented as $P=X_{0}\cup \bigcup_{i=1}^{\infty}X_{i}$ satisfying the following conditions:

\smallskip
(1) $X_{i}$ is an infinite discrete open subspace of $P$ for every $i\in \mathbb{N}$,

\smallskip
(2) the family $\{X_{i}: i\in\omega\}$ is pairwise disjoint, and

\smallskip
(3) $X_{0}$ is a compact subspace of $P$, and $\{V_{k}=X_{0}\cup \bigcup_{i=k}^{\infty}X_{i}: k\in\mathbb{N}\}$ is a neighborhood base at $X_{0}$ in $P$.

\smallskip
In the above definition, if each $X_{i}$ consists of countably many elements and $X_{0}$ is an one-point set, we denote the space by $P_{0}$. We put $C=\{\frac{1}{n}: n\in\mathbb{N}\}\cup\{0\}$ with the subspace topology of $I$. Let $$\mathbf{G}_{0}=\bigoplus\{C_{i}: i\in\mathbb{N}\}\bigoplus P_{0},$$ where each $C_{i}$ is a copy of $C$ for each $i\in\mathbb{N}$.
Let $$\mathbf{G}_{1}=\bigoplus\{C_\alpha: \alpha<\omega_1\},$$ where $C_\alpha=\{c(\alpha, n): n\in \mathbb{N}\}\cup\{c_\alpha\}$ with $c(\alpha, n)\to c_\alpha$ as $n\rightarrow\infty$ for each $\alpha\in\omega_{1}$.

\smallskip
From here on, we shall use the notations $\mathfrak{T}$, $P_{0}$, $\mathbf{G}_{0}$ and $\mathbf{G}_{1}$ with the meaning
the above meaning.

\begin{lemma}\label{M1}
The space $S_{\omega}\times P$ is not a $k_{R}$-space for each space $P$ in $\mathfrak{T}$.
\end{lemma}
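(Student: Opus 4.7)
The plan is to invoke Lemma~\ref{strict 1} and produce a strict $\mathrm{Cld}^{\omega}$-fan inside $S_{\omega}\times P$. For each $n\in\mathbb{N}$ I would choose a one-to-one sequence $\{p(n,m):m\in\omega\}$ of distinct points in the infinite discrete open subspace $X_{n}\subset P$, and set
\[
F_{n}:=\{(a(n,m),p(n,m)):m\in\omega\}.
\]
The candidate for the fan is the family $\{F_{n}:n\in\mathbb{N}\}$, and the task splits into the three familiar checks.

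For the first check I would observe that every $X_{n}$ is clopen in $P$: openness is in the definition of $\mathfrak{T}$, while closedness follows from the neighborhood-base hypothesis (for $y\in X_{0}$ the set $V_{n+1}$ is an open neighborhood of $y$ disjoint from $X_{n}$, and for $y\in X_{j}$ with $j\neq n$ the singleton $\{y\}$ suffices). Once $X_{n}$ is clopen and discrete, each $F_{n}$ is a union of isolated points of $S_{\omega}\times P$ and hence open, while a non-eventually-constant sequence in $F_{n}$ would force its second coordinates to converge in the closed discrete set $X_{n}$, which is impossible, so $F_{n}$ is also closed. Its indicator map then provides a continuous $[0,1]$-valued witness that $F_{n}$ is a functional neighborhood of itself. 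Compact-finiteness of $\{F_{n}\}$ is handled next: every compact subset of $S_{\omega}$ is contained in the union of finitely many columns $\{a(n,m):m\in\omega\}$, so for any compact $K\subset S_{\omega}\times P$ the first-coordinate projection $\pi_{S_{\omega}}(K)$ lies in finitely many columns, and since each $F_{n}$ has first coordinate entirely in column $n$, $K$ meets $F_{n}$ only for the corresponding finite set of indices.

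The step I expect to carry the real content is the verification that $\{F_{n}\}$ fails to be locally finite, and this is where I would use the full strength of the neighborhood-base assumption. Fix any $x^{*}\in X_{0}$, and let $U\times W$ be an arbitrary basic neighborhood of $(a_{0},x^{*})$, where $U\supset\{a_{0}\}\cup\{a(n,m):m\geq f(n),\,n\in\omega\}$ for some $f\in\omega^{\omega}$. The hypothesis that $\{V_{k}\}$ is a neighborhood base of $X_{0}$ in $P$ yields $W\supset V_{k}$ for some $k$, so that $W\supset X_{n}$ for every $n\geq k$; then $(a(n,m),p(n,m))\in(U\times W)\cap F_{n}$ whenever $n\geq k$ and $m\geq f(n)$, which produces infinitely many $n$ with $F_{n}\cap(U\times W)\neq\emptyset$. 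Collecting everything, $\{F_{n}:n\in\mathbb{N}\}$ is a strict $\mathrm{Cld}^{\omega}$-fan in $S_{\omega}\times P$, and Lemma~\ref{strict 1} delivers the conclusion that $S_{\omega}\times P$ is not a $k_{R}$-space.
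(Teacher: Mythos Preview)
Your construction of the family $\{F_n\}$ and the verifications of closedness, compact-finiteness, and strictness are essentially the same as the paper's; your route to strictness (observing that each $F_n$ is clopen and hence a functional neighborhood of itself) is in fact more direct than the paper's, which appeals to the $\aleph$-space machinery of \cite{B2016} together with normality.

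There is, however, a genuine gap in your non-local-finiteness argument. You fix a point $x^{*}\in X_{0}$, take an arbitrary neighborhood $W$ of $x^{*}$, and assert that ``the hypothesis that $\{V_{k}\}$ is a neighborhood base of $X_{0}$ in $P$ yields $W\supset V_{k}$ for some $k$.'' This is false whenever $|X_{0}|\ge 2$: the family $\{V_{k}\}$ is declared to be a neighborhood base of the compact \emph{set} $X_{0}$, not of each of its points, and every $V_{k}$ contains all of $X_{0}$. A small metric ball $W$ around $x^{*}$ need not contain the other points of $X_{0}$, hence cannot contain any $V_{k}$. (Concretely, take $P$ to be the disjoint union of two copies of $P_{0}$ with compact part $X_{0}=\{a,b\}$; the basic neighborhood of $a$ consisting of $a$ together with only the ``$a$-side'' tails misses $b\in V_{k}$ for every $k$.) So from this step on the argument does not go through.

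The repair is a standard compactness manoeuvre. Suppose for contradiction that $\{F_{n}\}$ is locally finite. Then every point $(a_{0},x)$ with $x\in X_{0}$ has a basic neighborhood $U_{x}\times W_{x}$ meeting only finitely many $F_{n}$. Since $X_{0}$ is compact, finitely many $W_{x_{1}},\dots,W_{x_{r}}$ cover $X_{0}$; put $U=\bigcap_{i\le r}U_{x_{i}}$ and $W=\bigcup_{i\le r}W_{x_{i}}$. Now $W$ is an open set containing $X_{0}$, so the neighborhood-base hypothesis legitimately gives $W\supset V_{k}$ for some $k$, hence $W\supset X_{n}$ for all $n\ge k$. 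Since $U$ is a neighborhood of $a_{0}$, your original computation shows $(a(n,m),p(n,m))\in U\times W$ for all $n\ge k$ and $m\ge f(n)$, so $U\times W\subset\bigcup_{i}(U_{x_{i}}\times W_{x_{i}})$ meets infinitely many $F_{n}$, a contradiction. (For the record, the paper's displayed equality $\{a_{0}\}\times X_{0}=\overline{\bigcup_{n}F_{n}}\setminus\bigcup_{n}F_{n}$ is likewise too strong in general for the same reason; only the nonemptiness of the right-hand side is needed, and that follows from this compactness argument.)
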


\begin{proof}
Fix an arbitrary space $P$ in $\mathfrak{T}$. Then $P$ can be represented as $X_{0}\cup \bigcup_{i=1}^{\infty}X_{i}$ satisfying (1)-(3) in the above definition. Obviously, $S_{\omega}\times P$ is a normal space. By Lemma~\ref{strict 1}, it suffices to prove that $S_{\omega}\times P$ contains a strict closed$^{\omega}$-fan.

For each $n\in\mathbb{N}$, take an arbitrary countably infinite and pairwise disjoint subset $\{b(n, m): m\in\mathbb{N}\}$ of $X_{n}$. For each $n\in\mathbb{N}$, put $$F_{n}=\{(a(n, m), b(n, m)): m\in\mathbb{N}\}.$$ Obviously, each $F_{n}$ is closed. We claim that the family $\{F_{n}\}$ is compact-finite in $S_{\omega}\times P$. Indeed, for each compact subset $K$ in $S_{\omega}\times P$, there exist compact subsets $K_{1}$ and $K_{2}$ in $S_{\omega}$ and $P$ respectively such that $K\subset K_{1}\times K_{2}$. Since $K_{1}$ is compact in $S_{\omega}$, there exists a natural number $n_{0}$ such that $$K_{1}\subset \{a(n, m): n\leq n_{0_{}}, m\in\omega\}\cup\{a_{0}\}.$$ Therefore, $K\cap F_{n}=\emptyset$ for each $n>n_{0}$.

It is easy to see that $$\emptyset\neq\overline{\bigcup_{n\in\mathbb{N}} F_{n}}\setminus\bigcup_{n\in\mathbb{N}} F_{n}\subset\{a_{0}\}\times X_{0}.$$ Hence the family $\{F_{n}\}$ is not locally finite at $S_{\omega}\times P$. Since $S_{\omega}\times P$ is a normal $\aleph$-space, it follows from \cite[Proposition 2.9.2]{B2016} that the family $\{F_{n}\}$ is strongly compact-finite, and then by the normality the family $\{F_{n}\}$ is also strictly compact-finite.

Therefore, the family $\{F_{n}\}$ is a strict closed$^{\omega}$-fan in $S_{\omega}\times P$.
\end{proof}

\begin{proposition}\label{network}
Let $S_{\omega}\times Z$ be a $k_{R}$-space, where $Z$ is a stratifiable space. If $\{Z_{n}\}$ is a decreasing network for some point $z_{0}$ in $Z$, then the closure of $Z_{n}$ is compact for some $n$.
\end{proposition}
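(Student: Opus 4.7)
The plan is to argue by contradiction through Lemma~\ref{strict 1}. Assume that $\overline{Z_{n}}$ is not compact for every $n\in\mathbb{N}$. I will build a strict $\mathrm{Cld}^{\omega}$-fan $\{F_{n}:n\in\mathbb{N}\}$ in $S_{\omega}\times Z$; since $S_{\omega}\times Z$ is a $k_{R}$-space by hypothesis, this will contradict Lemma~\ref{strict 1}.

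The first step is to choose, for each $n\in\mathbb{N}$, an infinite set $D_{n}=\{z(n,m):m\in\mathbb{N}\}\subseteq Z_{n}$ that is closed and discrete in $Z$. Since $Z$ is stratifiable, the closed subspace $\overline{Z_{n}}$ is paracompact and a $\sigma$-space, so Chaber's theorem (countably compact $\sigma$-spaces are compact) combined with the non-compactness of $\overline{Z_{n}}$ produces an infinite closed discrete subset of $\overline{Z_{n}}$. Collectionwise normality of $Z$ lets me separate this set by a discrete open family $\{V_{\alpha}\}$ in $Z$; each $V_{\alpha}$ meets $Z_{n}$ because the corresponding point lies in $\overline{Z_{n}}$, so I can pick $z(n,m)\in V_{\alpha}\cap Z_{n}$, and the resulting $D_{n}$ is closed discrete in $Z$ and sits inside $Z_{n}$.

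Set $F_{n}=\{(a(n,m),z(n,m)):m\in\mathbb{N}\}$. A routine point-by-point check, using that the $a(n,m)$ are isolated in $S_{\omega}$ and $D_{n}$ is closed discrete in $Z$, shows $F_{n}$ is closed in $S_{\omega}\times Z$. For compact-finiteness, the projection of any compact $K\subseteq S_{\omega}\times Z$ onto $S_{\omega}$ is contained in $\{a(n,m):n\le n_{0},\,m\in\omega\}\cup\{a_{0}\}$ for some $n_{0}$, whence $K\cap F_{n}=\emptyset$ for all $n>n_{0}$. The family is not locally finite at $(a_{0},z_{0})$: for any basic open neighbourhood $V_{1}\times V_{2}$ of $(a_{0},z_{0})$ with $V_{1}=\{a_{0}\}\cup\{a(n,m):m\ge f(n),\,n\in\omega\}$, the network property of $\{Z_{n}\}$ supplies $n_{0}$ with $Z_{n_{0}}\subseteq V_{2}$, and then for every $n\ge n_{0}$ and $m\ge f(n)$ the point $(a(n,m),z(n,m))$ lies in $(V_{1}\times V_{2})\cap F_{n}$, while $(a_{0},z_{0})\notin\bigcup_{n}F_{n}$.

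Finally, to upgrade to strictness, put $U_{n}=\{a(n,m):m\in\mathbb{N}\}\times Z$. Each $U_{n}$ is open in $S_{\omega}\times Z$ and contains $F_{n}$, and $\{U_{n}\}$ is compact-finite by the same projection argument. As $S_{\omega}$ is La\v{s}nev, hence stratifiable, and the stratifiable class is closed under finite products by Borges, $S_{\omega}\times Z$ is stratifiable and therefore normal; Urysohn's lemma applied to the disjoint closed sets $F_{n}$ and $(S_{\omega}\times Z)\setminus U_{n}$ exhibits $U_{n}$ as a functional neighbourhood of $F_{n}$. Thus $\{F_{n}\}$ is a strict $\mathrm{Cld}^{\omega}$-fan, contradicting Lemma~\ref{strict 1}. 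I expect the main obstacle to be the initial extraction step, where stratifiability of $Z$ must be used in full strength, via Chaber's theorem and collectionwise normality, to replace the limit points of $Z_{n}$ inside $\overline{Z_{n}}\setminus Z_{n}$ by nearby points of $Z_{n}$ while preserving closed-discreteness in the ambient space $Z$.
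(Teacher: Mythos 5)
Your proof is correct, but it takes a genuinely different route from the paper's. The paper does not build a fan in $S_{\omega}\times Z$ directly: it extracts a disjoint family of countable closed discrete sets $C_{k}\subset\overline{Z_{n_{k}}}\setminus\{z_{0}\}$, forms $Z_{0}=\{z_{0}\}\cup\bigcup_{k}C_{k}$, notes that $Z_{0}$ is a closed copy of a space in $\mathfrak{T}$, invokes \cite[Proposition 5.10]{BG2014} (closed subspaces of stratifiable $k_{R}$-spaces are $k_{R}$) to conclude that $S_{\omega}\times Z_{0}$ would be a $k_{R}$-space, and then contradicts Lemma~\ref{M1}. You instead inline the fan construction in the full product: you push the closed discrete witnesses of non-countable compactness into $Z_{n}$ itself (legitimate, since stratifiable spaces are paracompact $\sigma$-spaces, so a non-compact $\overline{Z_{n}}$ is not countably compact, and collectionwise normality lets you select a closed discrete set inside $Z_{n}$ near it), and then the decreasing network property of $\{Z_{n}\}$ at $z_{0}$ yields non-local finiteness of $\{F_{n}\}$ at $(a_{0},z_{0})$, while the explicit open tubes $U_{n}=\{a(n,m):m\in\mathbb{N}\}\times Z$ together with normality of the stratifiable product give strictness; Lemma~\ref{strict 1} then finishes. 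What each approach buys: yours avoids both \cite[Proposition 5.10]{BG2014} and \cite[Proposition 2.9.2]{B2016}, and it sidesteps the verification that $Z_{0}$ really belongs to $\mathfrak{T}$ (in particular that the sets $\{z_{0}\}\cup\bigcup_{k\geq K}C_{k}$ form a neighborhood base at $z_{0}$ in $Z_{0}$, which is the delicate point of the paper's reduction; your use of $Z_{n}\subseteq V_{2}$ for all large $n$ handles exactly this issue cleanly). The paper's argument, on the other hand, is shorter given Lemma~\ref{M1}, which it needs elsewhere anyway, and reuses that lemma instead of repeating a fan construction.
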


\begin{proof}
If not, then each $\overline{Z_{n}}$ is not compact and thus not countably compact. Hence there exist a sequence $\{n_{k}\}$ in $\mathbb{N}$ and the family $\{C_{k}\}$ of countably infinite, discrete and closed subsets of $Z$ such that $C_{k}\subset \overline{Z_{n_{k}}}\setminus\{z_{0}\}$ for each $k\in\mathbb{N}$, and the family $\{C_{k}\}$ is pairwise disjoint. Put $$Z_{0}=\{z_{0}\}\cup\bigcup_{k\in\mathbb{N}} C_{k}.$$ It is easy to see that $Z_{0}$ belongs to $\mathfrak{T}$ and is closed in $Z$. Since $Z$ is stratifiable, $S_{\omega}\times Z$ is stratifiable, hence $S_{\omega}\times Z_{0}$ is stratifiable. By \cite[Proposition 5.10]{BG2014}, $S_{\omega}\times Z_{0}$ is a $k_{R}$-subspace by the assumption. However, it follows from Lemma~\ref{M1} that $S_{\omega}\times Z_{0}$ is not a $k_{R}$-subspace, which is a contradiction. Therefore, there exists $n\in\mathbb{N}$ such that the closure of $Z_{n}$ is compact.
\end{proof}

By Proposition~\ref{network}, we have the following corollary.

\begin{corollary}\label{compact}
Let $X$ be a stratifiable space with a point-countable $k$-network. If $S_{\omega}\times X$ is a $k_{R}$-space, then $X$ has a point-countable $k$-network consisting of sets with compact closures.
\end{corollary}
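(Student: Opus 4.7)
Plan: Let $\mathscr{P}$ be the given point-countable $k$-network for $X$. Since $X$ is stratifiable, we may replace $\mathscr{P}$ by its closure under finite intersections and assume each of its members is closed; these operations preserve both point-countability and the $k$-network property. The goal is to prove that the subfamily $\mathscr{Q}:=\{P\in\mathscr{P}:P\text{ is compact}\}$ is itself a point-countable $k$-network of $X$, which is exactly the conclusion.

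Point-countability of $\mathscr{Q}$ is inherited from $\mathscr{P}$. To see that $\mathscr{Q}$ is a network at each $x\in X$, enumerate $\mathscr{P}_x=\{P\in\mathscr{P}:x\in P\}$ as $\{P_n:n\in\mathbb N\}$ and set $Z_n=P_1\cap\cdots\cap P_n\in\mathscr{P}$. Then $\{Z_n\}$ is a decreasing network at $x$ in $X$, so Proposition~\ref{network} yields some $n_0$ with $\overline{Z_{n_0}}=Z_{n_0}$ compact; combining this with the network property of $\mathscr{P}$, for any open $U\ni x$ we can choose $n\ge n_0$ large enough that also $Z_n\subseteq U$, obtaining $Z_n\in\mathscr{Q}$ with $x\in Z_n\subseteq U$. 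The same argument, with a prescribed $P\in\mathscr{P}$ placed at the head of the enumeration, also gives: for every $P\in\mathscr{P}$ and every $x\in P$ there is $Q\in\mathscr{Q}$ with $x\in Q\subseteq P$.

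For the $k$-network property, fix a compact $K\subseteq U$ with $U$ open, and use $\mathscr{P}$'s $k$-network property to cover $K$ by finitely many $P_1,\dots,P_m\in\mathscr{P}$ with $P_i\subseteq U$. Each compact $K\cap P_i\subseteq P_i$ is then covered pointwise by $\mathscr{Q}$-elements lying in $P_i$, thanks to the previous paragraph. The main obstacle is to extract from this pointwise assignment a finite subcover of $K\cap P_i$, since members of $\mathscr{Q}$ are closed rather than open. To surmount this, we plan to show that $\mathscr{Q}$ is a $cs^*$-network for $X$: given $x_n\to x$ and open $U\ni x$, first apply $\mathscr{P}$'s $k$-network property to the compact set $\{x\}\cup\{x_n\}\subseteq U$ to obtain a closed $P\in\mathscr{P}$ with $x\in P\subseteq U$ containing a subsequence of $(x_n)$, and then apply Proposition~\ref{network} inside the closed stratifiable subspace $P$ (for which $S_\omega\times P$ is a closed, hence $k_R$, subspace of $S_\omega\times X$) to refine $P$ to some $Q\in\mathscr{Q}$ with $x\in Q\subseteq P$ that still captures a subsequence of $(x_n)$. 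Since $X$ is sequential (a consequence of being a stratifiable space with a point-countable $k$-network), the standard theorems relating point-countable $cs^*$-networks to $k$-networks then promote $\mathscr{Q}$ to a $k$-network of $X$, completing the proof.
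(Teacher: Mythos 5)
Your first two paragraphs are essentially sound and correspond to what the paper intends (the paper justifies this corollary only by the one-line appeal to Proposition~\ref{network}): closing $\mathscr{P}$ under finite intersections and applying Proposition~\ref{network} to the decreasing intersections at each point does show that $\mathscr{Q}=\{P\in\mathscr{P}:\overline{P}\ \mbox{is compact}\}$ is a point-countable network refining $\mathscr{P}$. The genuine gap is in your third paragraph, at exactly the step that carries all the difficulty. You say that, having found $P\in\mathscr{P}$ with $P\subseteq U$ catching a subsequence of $(x_n)$, you will ``apply Proposition~\ref{network} inside $P$'' to refine $P$ to some $Q\in\mathscr{Q}$ with $x\in Q\subseteq P$ that \emph{still captures a subsequence}. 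Proposition~\ref{network} cannot deliver this: it only asserts that in whatever decreasing network $\{Z_m\}$ at $x$ you feed it, some $\overline{Z_m}$ is compact; the $Z_m$ (finite intersections of members of $\mathscr{P}$ containing $x$) are mere network elements at $x$ and need not contain a single term of the sequence (if, say, $\{x\}\in\mathscr{P}$, they are eventually $\{x\}$). So the $cs^*$-type property of $\mathscr{Q}$ --- the heart of the $k$-network verification --- is left unproved, and this is precisely the nontrivial content of the corollary (it is in effect \cite[Lemma 2.1]{LiuLin1997}, which the paper invokes elsewhere). A correct argument must be indirect: for instance, assume that none of the countably many members of $\mathscr{P}$ lying in $U$ and catching a subsequence of $(x_n)$ has compact closure, and use them, in the spirit of Proposition~\ref{network} and Lemma~\ref{M1}, to produce a closed copy of a space in $\mathfrak{T}$ (or a decreasing network all of whose members have non-compact closures), contradicting the $k_R$-property of $S_\omega\times X$.

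Two smaller defects. The opening reduction ``we may assume each member of $\mathscr{P}$ is closed'' is not legitimate: replacing members by their closures preserves the $k$-network property (shrink $U$ using regularity and compactness of $K$) but can destroy point-countability (for example, the pairwise disjoint spines of $S_{\omega_1}$ form a point-countable family, yet every spine acquires the vertex in its closure), and you use closedness essentially, both to conclude $x\in P$ and to make $S_\omega\times P$ a closed subspace of $S_\omega\times X$. Finally, your last sentence rests on ``$X$ is sequential because it is stratifiable with a point-countable $k$-network'', which is false (the Arens--Fort space is a countable stratifiable non-sequential space all of whose compact subsets are finite, so its singletons form a countable $k$-network), and sequentiality of $X$ is not among the hypotheses of the corollary. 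This slip is repairable: the promotion of a point-countable $cs^*$-network to a $k$-network only requires every compact subset of $X$ to be sequentially compact, which holds here since compact subsets of stratifiable spaces are metrizable. The missing catching argument in the previous paragraph, however, is a real hole.
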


Let $X$ and $Y$ be two topological spaces. We recall that the pair $(X, Y)$ satisfies the {\it Tanaka's conditions} \cite{T1976} if one of the following (1)-(3) hold:

\smallskip
(1) Both $X$ and $Y$ are first-countable;

\smallskip
(2) Both $X$ and $Y$ are $k$-spaces, and $X$ or $Y$ is locally compact\footnote{A space $X$ is called {\it locally compact} if every point of $X$ has a compact neighbourhood.};

\smallskip
(3) Both $X$ and $Y$ are locally $k_{\omega}$-spaces\footnote{A space $X$ is called {\it locally $k_{\omega}$} if every point of $X$ has a $k_{\omega}$-neighbourhood.}.

\begin{theorem}\label{kr charac}
Assume $\mathfrak b\leq\omega_1$, then the following statements hold.
 \begin{enumerate}
\item If $X$ is a stratifiable $k$-space with a point-countable $k$-network and $S_{\omega}\times X$ is a $k_{R}$-space, then $X$ is a locally $\sigma$-compact space; in particular, if $X$ has a compact-countable $k$-network then $X$ is a locally $k_{\omega}$-space.
\item If $X$ is a stratifiable $k$-space with a point-countable $k$-network and $S_{\omega_{1}}\times X$ is a $k_{R}$-space, then $X$ is a locally compact space.
\item If both $X$ and $Y$ are stratifiable $k$-spaces with a compact countable $k$-network, then $X\times Y$ is a $k_{R}$-space if and only if the pair $(X, Y)$ satisfies the conditions of Tanaka.
  \end{enumerate}
\end{theorem}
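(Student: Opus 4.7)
The plan is to obstruct each conclusion that fails by producing one of two forbidden configurations: a strict $\mathrm{Cld}$-fan, ruled out by Lemma~\ref{strict 1}, or a closed subspace of the form $S_{\omega}\times P$ with $P\in\mathfrak{T}$, ruled out by Lemma~\ref{M1}; in both cases, the closed-heredity of the $k_{R}$-property (\cite[Proposition 5.10]{BG2014}) is what propagates the obstruction from the subspace to $S_{\omega}\times X$ or $S_{\omega_{1}}\times X$. A useful preliminary for (2): since $S_{\omega}$ embeds as a closed subspace of $S_{\omega_{1}}$, the hypothesis of (2) implies that of (1) automatically.

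For (1), Corollary~\ref{compact} lets me replace the given point-countable $k$-network by a point-countable $k$-network $\mathcal{P}$ of compact-closure sets. Fix $x\in X$, enumerate $\mathcal{P}_{x}=\{P\in\mathcal{P}:x\in P\}=\{P_{n}:n\in\mathbb{N}\}$ and put $V=\bigcup_{n}\overline{P_{n}}$. The heart of the argument is to show that $V$ is a neighborhood of $x$. If $V$ is not a neighborhood, then stratifiability together with the $k$-space property allows an inductive extraction of pairwise-disjoint countably infinite closed discrete sets $C_{k}\subset X\setminus(\overline{P_{1}}\cup\cdots\cup\overline{P_{k}})$ whose union has $x$ as its only accumulation point; the subspace $\{x\}\cup\bigcup_{k}C_{k}$ is then closed in $X$ and homeomorphic to the element $P_{0}$ of $\mathfrak{T}$, so the closed embedding $S_{\omega}\times P_{0}\hookrightarrow S_{\omega}\times X$ contradicts Lemma~\ref{M1}. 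Hence $V$ is a $\sigma$-compact neighborhood of $x$. For the ``in particular'' clause, compact-countability of $\mathcal{P}$ forces each compact subset of $V$ to meet only finitely many $\overline{P_{n}}$, turning $\{\overline{P_{n}}\}_{n}$ into a $k_{\omega}$-decomposition of $V$, so $X$ is locally $k_{\omega}$.

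For (2), the preliminary observation and (1) give that $X$ is locally $\sigma$-compact. If $X$ fails to be locally compact at some $x$, then starting from a $\sigma$-compact neighborhood $V=\bigcup_{n}K_{n}$ of $x$ with no $K_{n}$ a neighborhood, I would use stratifiability, the $k$-space property and the point-countable $k$-network to build a closed copy of $S_{\omega}$ at $x$ by diagonalizing over a sequence of non-compactness witnesses. This yields a closed embedding $S_{\omega}\times S_{\omega_{1}}\hookrightarrow X\times S_{\omega_{1}}$, which contradicts Theorem~\ref{BF}: under the standing assumption $\mathfrak{b}\le\omega_{1}$, the product $S_{\omega}\times S_{\omega_{1}}$ is not a $k_{R}$-space. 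This is the single place where $\mathfrak{b}\le\omega_{1}$ is used.

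For (3), the direction $(\Leftarrow)$ is immediate from Tanaka's classical product theorem: each of his three conditions makes $X\times Y$ a $k$-space, hence a $k_{R}$-space. For $(\Rightarrow)$, the key structural fact is that a stratifiable $k$-space with compact-countable $k$-network that is not first countable contains a closed copy of $S_{\omega}$ at every non-first-countable point. I would distinguish three cases. If both $X$ and $Y$ are first countable, we are in Tanaka~(1). If neither is, two closed embeddings $S_{\omega}\hookrightarrow X,\ S_{\omega}\hookrightarrow Y$ make $S_{\omega}\times X$ and $S_{\omega}\times Y$ into closed subspaces of $X\times Y$, hence $k_{R}$-spaces; by (1) both factors are locally $k_{\omega}$, placing us in Tanaka~(3). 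If exactly one, say $X$, is first countable, then the closed copy $S_{\omega}\hookrightarrow Y$ makes $S_{\omega}\times X$ a $k_{R}$-space and (1) makes $X$ locally $k_{\omega}$; since a first countable hemicompact space is locally compact, $X$ is locally compact, giving Tanaka~(2). The main obstacle running through (1) and (2) is the inductive construction of the closed copies of $P_{0}$ and $S_{\omega}$: point-countability of $\mathcal{P}$ is what bounds the countably many previously chosen network members at each stage, while stratifiability is what keeps the diagonalized union closed in $X$.
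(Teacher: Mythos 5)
There is a genuine gap, and it is located exactly where you claim economy: you assert that $\mathfrak b\le\omega_1$ enters only in part (2), but your argument for part (1) cannot be made to work without it, because statement (1) is not a ZFC theorem. Indeed, if $\mathfrak b>\omega_1$ then $X=S_{\omega_1}$ is a counterexample to your ``$V$ is a neighborhood'' claim and to the conclusion itself: $S_{\omega_1}$ is a La\v{s}nev (hence stratifiable) sequential space with a compact-countable $k$-network (singletons, $\{\infty\}$, and tails of the spines), and $S_{\omega}\times S_{\omega_1}$ is then a $k$-space, hence a $k_R$-space, by Gruenhage's theorem (Theorem~\ref{BF}); yet $S_{\omega_1}$ is not locally $\sigma$-compact at $\infty$, since every compact subset of $S_{\omega_1}$ meets only finitely many spines while every neighborhood of $\infty$ meets all $\omega_1$ of them. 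The precise false step in your construction is the passage from ``$\bigcup_k C_k$ has $x$ as its only accumulation point'' to ``$\{x\}\cup\bigcup_k C_k$ is homeomorphic to $P_0$''. Membership in $\mathfrak{T}$ requires condition (3) of its definition, namely that the tails $\{x\}\cup\bigcup_{k\ge m}C_k$ form a neighborhood base at $x$ in the subspace; this fails at points of uncountable character. For example, at the vertex of $S_{\omega_1}$ one can choose pairwise disjoint closed discrete sets $C_k$ (one point from each of countably many spines) whose union accumulates only at $\infty$, but the resulting closed subspace is a copy of $S_\omega$, not of the metrizable space $P_0$; in fact $S_{\omega_1}$ contains no closed copy of $P_0$ at all. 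This is exactly why Proposition~\ref{network} assumes $\{Z_n\}$ is a decreasing \emph{network} at the point: that hypothesis is what supplies condition (3), and it is not available in your setting.

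For comparison, the paper's proof of (1) also starts from Corollary~\ref{compact}, but then uses $\mathfrak b\le\omega_1$ essentially: it shows $X$ has no closed subspace admitting a perfect map onto $S_{\omega_1}$ (otherwise $S_\omega\times S_{\omega_1}$ would be a perfect image of the closed, hence $k_R$, subspace $S_\omega\times H$, contradicting Theorem~\ref{BF}), and then invokes the characterization of local $\sigma$-compactness from \cite[Lemma 1.3]{LiuLin2006}. So the cardinal assumption is used in (1), and therefore also in (3), which rests on (1). Your sketch of (2) has a related defect: ``$X$ not locally compact at $x$ $\Rightarrow$ a closed copy of $S_\omega$ at $x$'' is false (consider $\mathbb{Q}$, which is nowhere locally compact and contains no copy of $S_\omega$); the paper instead first rules out closed copies of $S_2$ and $S_\omega$ in $X$ (again because $S_{\omega_1}\times S_\omega$ would be a closed $k_R$ subspace, contradicting Theorem~\ref{BF}), deduces first countability from \cite[Corollary 3.9]{Lin1997}, and then gets local compactness from \cite[Lemma 2.1]{LiuLin1997} together with the compact-closure $k$-network. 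Your part (3) case analysis and the direction using Tanaka's product theorem are fine in outline, but they inherit the gap through your part (1).
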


\begin{proof}
By Theorem~\ref{BF}, we see that $S_{\omega}\times S_{\omega_{1}}$ is not a $k_{R}$-space.

\smallskip
(1) Assume that $X$ is a stratifiable $k$-space and $S_{\omega}\times X$ is a $k_{R}$-space. By Corollary~\ref{compact}, it follows that $X$ has a point-countable $k$-network consisting of sets with compact closures. We claim that there is no closed copy $H$ of $X$ such that $H$ is the inverse image of $S_{\omega_{1}}$ under some perfect mapping. Assume to the contrary that there exist a closed subspace $H$ of $X$ and a perfect mapping $f: H\rightarrow S_{\omega_{1}}$ from $H$ onto $S_{\omega_{1}}$. Then it is easy to see that the mapping $\mbox{id}_{S_{\omega}}\times f: S_{\omega}\times H\rightarrow S_{\omega}\times S_{\omega_{1}}$ is also an onto perfect mapping. Moreover, $S_{\omega}\times H$ is a $k_{R}$-space since $S_{\omega}\times X$ is a stratifiable $k_{R}$-space. Since the $k_{R}$-property is preserved by the perfect mappings, $S_{\omega}\times S_{\omega_{1}}$ is a $k_{R}$-space, which is a contradiction.

Since $X$ is a $k$-space with a point-countable $k$-network consisting of sets with compact closures, it follows from \cite[Lemma 1.3]{LiuLin2006} that $X$ contains no closed copy subspace of $X$ such that it is the inverse image of $S_{\omega_{1}}$ under some perfect mapping if and only if $X$ is a locally $\sigma$-compact space, thus $X$ is a locally $\sigma$-compact space.

If $X$ has a compact-countable $k$-network, then $X$ is a locally $\aleph_{0}$-space. Then $X$ is a locally $k_{\omega}$-space since $X$ is a $k$-space.

\smallskip
(2) Assume that $X$ is a stratifiable $k$-space and $S_{\omega_{1}}\times X$ is a $k_{R}$-space. Moreover, since $S_{\omega}\times S_{\omega_{1}}$ is not a $k_{R}$-space, it is easy to see that $X$ contain no closed copies of $S_{2}$ and $S_{\omega}$. Then $X$ is first-countable by \cite[Corollary 3.9]{Lin1997} since $X$ is a $k$-space with a point-countable $k$-network. Since $S_{\omega}\times X$ is a closed subspace of the stratifiable space $S_{\omega_{1}}\times X$, the subspace $S_{\omega}\times X$ is a $k_{R}$-space. By Corollary~\ref{compact}, it follows that $X$ has a point-countable $k$-network consisting of sets with compact closures. Then it follows from \cite[Lemma 2.1]{LiuLin1997} and the first-countability of $X$ that $X$ is locally compact.

\smallskip
(3) Assume that both $X$ and $Y$ are stratifiable $k$-spaces with a compact-countable $k$-network. Obviously, it suffices to prove that if $X\times Y$ is a $k_{R}$-space then $(X, Y)$ satisfies the conditions of Tanaka. We divide the proof into the following cases.

\smallskip
{\bf Case 1} Both $X$ and $Y$ contain no closed copies of $S_{2}$ and $S_{\omega}$.

\smallskip
Since both $X$ and $Y$ are $k$-spaces, it follows from \cite[Corollary 3.9]{Lin1997} that $X$ and $Y$ are all first-countable. Therefore, the pair $(X, Y)$ satisfies the conditions of Tanaka.

\smallskip
{\bf Case 2} Both $X$ and $Y$ contain closed copies of $S_{2}$ or $S_{\omega}$.

\smallskip
Then both $S_{\omega}\times Y$ and $X\times S_{\omega}$ are locally $k_{\omega}$-spaces by (1). Therefore, the pair $(X, Y)$ satisfies the conditions of Tanaka.

\smallskip
{\bf Case 3} The space $X$ contains a closed copy of $S_{2}$ or $S_{\omega}$ and $Y$ contains no closed copies of $S_{2}$ and $S_{\omega}$.

\smallskip
Then $Y$ is first-countable by \cite[Corollary 3.9]{Lin1997} and $S_{\omega}\times Y$ is a $k_{R}$-space since $X\times Y$ is stratifiable. By Corollary~\ref{compact}, it follows that $Y$ has a compact-countable $k$-network consisting of sets with compact closures. Then it follows from \cite[Lemma 2.1]{LiuLin1997} and the first-countability of $Y$ that $Y$ is locally compact. Therefore, the pair $(X, Y)$ satisfies the conditions of Tanaka.

\smallskip
{\bf Case 4} The space $Y$ contains a closed copy of $S_{2}$ or $S_{\omega}$ and $X$ contains no closed copies of $S_{2}$ and $S_{\omega}$.

\smallskip
The proof is the same as the proof of Case 3.
\end{proof}

\begin{corollary}\label{kr charac1}
Assume $\mathfrak b\leq\omega_1$, then the following statements hold.
 \begin{enumerate}
\item If $X$ is a La\v{s}nev space and $S_{\omega}\times X$ is a $k_{R}$-space, then $X$ is a locally $k_{\omega}$-space.
\item If $X$ is a La\v{s}nev space and $S_{\omega_{1}}\times X$ is a $k_{R}$-space, then $X$ is a locally compact space.
\item If $X$ and $Y$ are La\v{s}nev spaces, then $X\times Y$ is a $k_{R}$-space if and only if the pair $(X, Y)$ satisfies the conditions of Tanaka.
  \end{enumerate}
\end{corollary}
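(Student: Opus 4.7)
The plan is to obtain each of (1)--(3) as a specialization of Theorem~\ref{kr charac} by upgrading the hypothesis ``La\v{s}nev'' to match the exact hypotheses of that theorem. Since every La\v{s}nev space is stratifiable (see the Note following Fact~2) and is a $k$-space as a closed quotient of a metric space, only the point-countable / compact-countable $k$-network assumptions of Theorem~\ref{kr charac} need to be verified.

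For (1), assume $X$ is La\v{s}nev and $S_{\omega}\times X$ is a $k_{R}$-space. The first step is to rule out a closed copy of $S_{\omega_{1}}$ in $X$: if one existed, then $S_{\omega}\times S_{\omega_{1}}$ would sit closedly in the stratifiable space $S_{\omega}\times X$ (stratifiability of this product is the same fact already used in the proof of Proposition~\ref{network}), and Proposition~5.10 of \cite{BG2014} would promote it to a $k_{R}$-space, contradicting Theorem~\ref{BF} because $\mathfrak b\leq\omega_{1}$. Hence Fact~2 applies and makes $X$ an $\aleph$-space. Its $\sigma$-locally finite $k$-network is automatically both point-countable and compact-countable, so Theorem~\ref{kr charac}(1) yields that $X$ is a local $k_{\omega}$-space. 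Part (2) is essentially the same argument: since $S_{\omega}$ embeds as a closed subspace of $S_{\omega_{1}}$, the product $S_{\omega}\times X$ is a closed subspace of the stratifiable $k_{R}$-space $S_{\omega_{1}}\times X$, hence is itself $k_{R}$, and the argument of (1) supplies the point-countable $k$-network required to invoke Theorem~\ref{kr charac}(2).

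For (3), the backward direction is immediate: under any of the Tanaka conditions, $X\times Y$ is a $k$-space by Tanaka's classical theorem, and since $X\times Y$ is Tychonoff (La\v{s}nev spaces are normal), it is automatically a $k_{R}$-space. For the forward direction, the plan is to invoke the known fact that every La\v{s}nev space admits a compact-countable $k$-network -- this follows by combining a $\sigma$-hereditarily closure-preserving $k$-network present in any La\v{s}nev space with the facts that compact subsets of a stratifiable space are metrizable and that hereditarily closure-preserving families in compact metric spaces are countable -- after which Theorem~\ref{kr charac}(3) applies verbatim.

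The main obstacle, I expect, is not the logical skeleton (which is a direct reduction) but the two supporting facts one has to cite from outside the paper: first, that $S_{\omega}\times X$ and $S_{\omega_{1}}\times X$ remain stratifiable whenever $X$ is La\v{s}nev, which is what licenses the transfer of the $k_{R}$-property to the closed subspaces $S_{\omega}\times S_{\omega_{1}}$ etc.; and second, that every La\v{s}nev space possesses a compact-countable $k$-network, a classical statement from the theory of generalized metric spaces that is not proved internally in the excerpt. Once these two points are settled, items (1)--(3) fall out of Theorem~\ref{kr charac} without further work.
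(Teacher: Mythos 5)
Your reduction of (1) and (2) to Theorem~\ref{kr charac} is correct and is essentially the intended argument: under $\mathfrak b\leq\omega_1$, a closed copy of $S_{\omega_1}$ in $X$ would make $S_{\omega}\times S_{\omega_1}$ a closed subspace of the stratifiable space $S_{\omega}\times X$ (resp.\ of $S_{\omega_1}\times X$), hence a $k_R$-space by \cite[Proposition 5.10]{BG2014}, contradicting Theorem~\ref{BF}; Fact~2 then makes $X$ an $\aleph$-space, and a $\sigma$-locally finite $k$-network is both point-countable and compact-countable, so Theorem~\ref{kr charac}(1),(2) apply. The backward implication in (3) is also fine (each Tanaka condition gives a $k$-space product, which is Tychonoff and hence $k_R$).

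The gap is in the forward direction of (3), in your justification of the auxiliary claim that every La\v{s}nev space has a compact-countable $k$-network. The step ``hereditarily closure-preserving families in compact metric spaces are countable'' is false in the form you need: uncountably many members of an HCP family may meet a fixed compact set, provided they all meet it inside a common \emph{finite} set of points. For instance, in $S_{\omega_1}$ the family $\{\{\infty\}\cup\{x(\alpha,n):n\in\omega\}\}_{\alpha<\omega_1}$ is hereditarily closure-preserving, yet every member meets the compact set $\{\infty\}$; Foged's $\sigma$-HCP $k$-network may consist of sets of exactly this kind, so compact-countability does not follow as you state it. The claim itself can be repaired: for each HCP piece $\mathscr{P}_n$ the set $D_n$ of points lying in infinitely many members of $\mathscr{P}_n$ is closed and discrete (La\v{s}nev spaces are sequential), and replacing each $P\in\mathscr{P}_n$ by $P\setminus D_n$ while adjoining the singletons of points of $D_n$ yields a $\sigma$-compact-finite, hence compact-countable, $k$-network (this refinement is in the spirit of \cite{Liu1992,LiuT1998}, which you should cite rather than the false intermediate claim). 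Alternatively, and closer to what the corollary's placement after Facts~1--2 suggests, you can avoid the general fact entirely by a case analysis on closed copies of $S_{\omega_1}$: if neither $X$ nor $Y$ contains one, both are $\aleph$-spaces by Fact~2 and Theorem~\ref{kr charac}(3) applies verbatim; both cannot contain one, since otherwise $S_{\omega_1}\times S_{\omega_1}$ would be a closed subspace of the stratifiable $k_R$-space $X\times Y$, contradicting Theorem~\ref{l2}; and if exactly one does, say $X$, then $S_{\omega_1}\times Y$ is a closed subspace of $X\times Y$, hence $k_R$, so Theorem~\ref{kr charac}(2) (with $Y$ an $\aleph$-space) makes $Y$ locally compact and Tanaka's condition (2) holds because both spaces are $k$-spaces.
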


\begin{theorem}\label{La}
Let $X$ be a stratifiable space such that $X^2$ is a $k_R$-space. If $X$ satisfies one of the following conditions, then either $X$ is metrizable or $X$ is the topological sum of $k_\omega$-subspaces.
\begin{enumerate}
\item $X$ is a $k$-space with a compact-countable $k$-network;
\item $X$ is a Fr\'echet-Urysohn space with a point-countable $k$-network.
\end{enumerate}
\end{theorem}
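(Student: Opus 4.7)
The plan is to perform a dichotomy on whether $X$ contains a closed copy of the sequential fan $S_\omega$. Since $X$ is stratifiable, so is $X^{2}$, and closed subspaces of stratifiable $k_R$-spaces are themselves $k_R$ by \cite[Proposition 5.10]{BG2014} (the fact used in Proposition~\ref{network}); this is the bridge that lets information transfer from $X^{2}$ to subspaces of the form $C\times X$ with $C\subset X$ closed. If $X$ has no closed copy of $S_\omega$, the goal is metrizability; otherwise, the goal is the topological sum decomposition.

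\emph{No closed copy of $S_\omega$.} Under hypothesis (2), since Arens' space $S_{2}$ is not Fr\'echet-Urysohn (no sequence of isolated points converges to $\infty$), the Fr\'echet-Urysohn assumption combined with the absence of $S_\omega$ forces first-countability by \cite[Corollary 3.9]{Lin1997}. Under hypothesis (1), one must additionally rule out a closed copy of $S_{2}$: if $S_{2}\subset X$ were closed, then $S_{2}\times S_{2}\subset X^{2}$ would be closed, and one constructs a strict $\mathrm{Cld}^{\omega}$-fan in $S_{2}\times S_{2}$ in the spirit of the proof of Theorem~\ref{l2} (using the discrete squares of "horizontal" and "vertical" isolated points), contradicting Lemma~\ref{strict 1}. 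Hence in either case $X$ is first-countable, and a first-countable stratifiable space with a point-countable $k$-network is metrizable by classical results.

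\emph{Closed copy of $S_\omega$.} Let $C\subset X$ be a closed copy of $S_\omega$; then $C\times X$ is a closed, hence $k_R$, subspace of $X^{2}$, so $S_\omega\times X$ is a $k_R$-space. Corollary~\ref{compact} then equips $X$ with a point-countable $k$-network of sets with compact closures, and Lemma~\ref{M1} forbids any closed copy of a member $P\in\mathfrak{T}$ inside $X$, since $S_\omega\times P$ would otherwise appear as a closed non-$k_R$-subspace of $S_\omega\times X$. Adapting the argument of Theorem~\ref{kr charac}(1) (via \cite[Lemma 1.3]{LiuLin2006}), the exclusion of such $P$-subspaces yields local $\sigma$-compactness of $X$, which the compact-countable $k$-network in case (1) (respectively the first-countability provided by Fr\'echet-Urysohn plus the compact-closure $k$-network in case (2)) upgrades to local $k_\omega$-ness.

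The main obstacle will be converting local $k_\omega$-ness into a genuine topological-sum decomposition into $k_\omega$-subspaces. The plan is to exploit the paracompactness and monotone normality of stratifiable spaces: consider the equivalence relation on $X$ declaring $x\sim y$ whenever $x$ and $y$ lie in a common $k_\omega$-subspace of $X$, verify that each equivalence class is clopen (via the local $k_\omega$-neighborhoods together with closed-hereditariness of $k_\omega$ within stratifiable spaces), and then show each class is itself a $k_\omega$-space by extracting a countable cofinal family of compact sets from the local $k_\omega$-witnesses using Lindel\"ofness inherited from $\sigma$-compactness. This final step is delicate because local $k_\omega$-ness alone is not sufficient for a sum decomposition outside of the stratifiable setting.
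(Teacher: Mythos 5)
There is a genuine gap, and it sits exactly where you try to dispose of Arens' space under hypothesis (1). You claim that if $X$ contains a closed copy of $S_{2}$, then $S_{2}\times S_{2}\subset X^{2}$ admits a strict $\mathrm{Cld}^{\omega}$-fan, contradicting Lemma~\ref{strict 1}. This is false: $S_{2}$ (like $S_{\omega}$) is a $k_{\omega}$-space --- the sets $K_{n}=\{\infty\}\cup\{x_{i}:i\in\mathbb{N}\}\cup\{x_{i,m}:i\leq n,\ m\in\mathbb{N}\}$ are compact and determine the topology --- and the product of two $k_{\omega}$-spaces is $k_{\omega}$, hence a $k$-space and a $k_{R}$-space, so it contains no strict Cld-fan at all. (The well-known failure concerns the countable \emph{power} $(S_{2})^{\omega}$, not the square.) Consequently your "no closed copy of $S_\omega$" branch draws the wrong conclusion: $X=S_{2}$ itself is a stratifiable $k$-space with a countable $k$-network, has no closed copy of $S_{\omega}$, contains a closed copy of $S_{2}$, and $X^{2}$ is $k_{R}$, yet $S_{2}$ is not metrizable. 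The correct handling of this case (as in the paper) is not to exclude $S_{2}$ but to use it: $S_{2}\times X$ is a closed, hence $k_{R}$, subspace of the stratifiable space $X^{2}$, and $S_{\omega}\times X$ is its image under a perfect quotient map, so $S_{\omega}\times X$ is again $k_{R}$; one then lands in the "topological sum of $k_{\omega}$-subspaces" alternative, not in the metrizable one.

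Two further points weaken the second half of your argument. First, you propose to get local $\sigma$-compactness by "adapting Theorem~\ref{kr charac}(1) via \cite[Lemma 1.3]{LiuLin2006}", but that theorem's proof is carried out under $\mathfrak b\leq\omega_{1}$ (it needs $S_{\omega}\times S_{\omega_{1}}$ to fail to be $k_{R}$, which is false when $\mathfrak b>\omega_{1}$ by Theorem~\ref{BF}), whereas Theorem~\ref{La} is a ZFC statement; so this route is not available as stated. Second, your passage from local $k_{\omega}$-ness to an actual topological sum (the equivalence relation whose classes you hope are clopen, plus extracting countable cofinal families of compacta) is only a sketch, and its key steps are not justified. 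The paper avoids both difficulties at once: once $S_{\omega}\times X$ is known to be $k_{R}$, Corollary~\ref{compact} gives a compact-countable $k$-network consisting of sets with compact closures, hence a star-countable compact $k$-network $\mathscr{P}$; Heath's decomposition \cite{H1964} splits $\mathscr{P}$ into countable pieces $\mathscr{P}_{\alpha}$ with pairwise disjoint unions $X_{\alpha}=\bigcup\mathscr{P}_{\alpha}$, and a short $k$-space argument shows each $X_{\alpha}$ is clopen with a countable compact $k$-network, i.e.\ $X=\bigoplus_{\alpha}X_{\alpha}$ with each $X_{\alpha}$ a $k_{\omega}$-space. You should replace the $S_{2}\times S_{2}$ fan claim and the local-$k_{\omega}$ gluing plan by this (or an equivalent complete) argument.
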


\begin{proof} First, suppose that $X$ is a $k$-space with a compact-countable $k$-network. We divide the proof into the following three cases.

 \smallskip
{\bf Case 1.1:} The space $X$ contains a closed copy of $S_\omega$.

 \smallskip
Since $S_\omega\times X$ is a closed subspace of the stratifiable space $X^2$, it follows from \cite[Proposition 5.10]{BG2014} that $S_\omega\times X$ is a $k_R$-space. By Proposition~\ref{network}, the space $X$ has a compact-countable $k$-network consisting of sets with
compact closures $\mathscr{P}$. Then $\mathscr{P}$ is star-countable, hence it follows from \cite{H1964} that we have $$\mathscr{P}=\bigcup_{\alpha\in A}\mathscr{P}_\alpha,$$ where each $\mathscr{P}_\alpha$ is countable and $(\bigcup\mathscr{P}_\alpha)\cap(\bigcup\mathscr{P}_\beta)=\varnothing$ for any $\alpha\neq\beta\in A$. For each $\alpha\in A$, put $X_\alpha=\bigcup\mathscr{P}_\alpha$. Since $\mathscr{P}$ is a $k$-network, it is easy to see that the family $\{X_{\alpha}: \alpha\in A\}$ is compact-finite in $X$. Put $\overline{\mathscr{P}}=\{\overline{P}: P\in\mathscr{P}\}$. We claim that $\overline{\mathscr{P}}$ is star-countable, hence $\overline{\mathscr{P}}$ is compact-countable since $\overline{\mathscr{P}}$ is a $k$-network in $X$.

Suppose not, there exists a $P\in\mathscr{P}$ and an uncountable subfamily $\{P_{\alpha}: \alpha<\omega_{1}\}$ of $\mathscr{P}$ such that $\overline{P}\cap \overline{P_{\alpha}}\neq\emptyset$ for each $\alpha<\omega_{1}$. Without loss of generality, we may assume that $P_{\alpha}\in\mathscr{P}_{\alpha}$ for each $\alpha<\omega_{1}$. Since each $\overline{P_{\alpha}}$ is metrizable, there exists a non-trivial sequence $T_{\alpha}$ in $P_{\alpha}$ converging to some point in $\overline{P}$. Without loss of generality, we may assume that $\overline{P}\cap T_{\alpha}=\emptyset$ for each $\alpha<\omega_{1}$. Let $F=\overline{P}\cup\bigcup_{\alpha<\omega_{1}}T_{\alpha}$. Since $X$ is a $k$-space and the family $\{T_{\alpha}: \alpha<\omega_{1}\}$ is compact-finite, the set $F$ is closed in $X$. Let $f: F\rightarrow F/P$ be the natural quotient mapping. Then $f$ is perfect and $F/P$ is homeomorphic to $S_{\omega_{1}}$, hence $F^{2}$ is the inverse image of $(S_{\omega_{1}})^{2}$ under a perfect mapping. By Theorem~\ref{l2}, $(S_{\omega_{1}})^{2}$ is not a $k_{R}$-space, thus $F^{2}$ is not a $k_{R}$-space. However, since $X$ is a stratifiable space and $F$ is closed in $X$, the subspace $F^{2}$ is a $k_{R}$-space, which is a contradiction.

Therefore, without loss of generality, we may assume that $\mathscr{P}$ is a compact-countable compact $k$-network of $X$. Obviously, each $X_\alpha$ is a closed $k$-subspace of $X$ and has a countable compact $k$-network $\mathscr{P}_\alpha$. Moreover, we claim that each $X_\alpha$ is open in $X$. Indeed, fix an arbitrary $\alpha\in A$. Since $X$ is a $k$-space, it suffices to prove that $\bigcup\{X_{\beta}: \beta\in A, \beta\neq\alpha\}\cap K$ is closed in $K$ for each compact subset $K$ in $X$. Take an arbitrary compact subset $K$ in $X$. Since $\mathscr{P}$ is a $k$-network of $X$, there exists a finite subfamily $\mathscr{P}^{\prime}\subset\mathscr{P}$ such that $K\subset \bigcup\mathscr{P}^{\prime}$. Then
\begin{eqnarray}
\bigcup\{X_{\beta}: \beta\in A, \beta\neq\alpha\}\cap K&=&\bigcup\{X_{\beta}: \beta\in A, \beta\neq\alpha\}\cap K\cap\bigcup\mathscr{P}^{\prime}\nonumber\\
&=&K\cap\{P: P\in\mathscr{P}^{\prime}, P\not\in\mathscr{P}_{\alpha}\}.\nonumber
\end{eqnarray}
Since each element of $\mathscr{P}^{\prime}$ is compact, the set $\bigcup\{X_{\beta}: \beta\in A, \beta\neq\alpha\}\cap K$ is closed in $K$.
Therefore, $X=\bigoplus_{\alpha\in A}X_\alpha$ and each $X_\alpha$ is a $k_\omega$-subspace of $X$. Thus $X$ is the topological sum of $k_\omega$-subspaces.

 \smallskip
{\bf Case 1.2:} The space $X$ contains a closed copy of $S_2$.

 \smallskip
Obviously, $S_2\times X$ is a $k_R$-space. Since $S_\omega$ is the image of $S_2$ under the perfect mapping and the $k_R$-property is preserved by the quotient mapping, $S_\omega\times X$ is a $k_R$-space. By Case 1.1, $X$ is the topological sum of $k_\omega$-subspaces.

 \smallskip
{\bf Case 1.3:} The space $X$ contains no copy of $S_\omega$ or $S_2$.

 \smallskip
Since $X$ is a $k$-space with a point-countable $k$-network, it follows from \cite[Lemma 8]{YanLin1999} and \cite[Corollary 3.10]{Lin1997} that $X$ has a point-countable base, and thus $X$ is metrizable since a stratifiable space with a point-countable base is metrizable \cite{Gr1984}.

Finally, let $X$ be a Fr\'echet-Urysohn space with a point-countable $k$-network. We divide the proof into the following two cases

 \smallskip
{\bf Case 2.1:} The space $X$ contains a closed copy of $S_\omega$ or $S_2$.

 \smallskip
By Case 1.2, without loss of generality we may assume that $X$ contains a closed copy of $S_\omega$. By Proposition~\ref{network}, the space $X$ has a point-countable $k$-network consisting of sets with
compact closures. Since $X$ is a regular Fr\'echet-Urysohn space, it follows from \cite[Corollary 3.6]{Sa97a} that $X$ is a La\v{s}nev space, hence $X$ has a compact-countable $k$-network. By Case 1.1, the space $X$ is the topological sum of $k_\omega$-subspaces.

 \smallskip
{\bf Case 2.2} The space $X$ contains no copy of $S_\omega$ or $S_2$.

 \smallskip
It follows from \cite[Lemma 8]{YanLin1999} and \cite[Corollary 3.10]{Lin1997} that $X$ has a point-countable base. Then $X$ is metrizable since a stratifiable space with a point-countable base is metrizable \cite{Gr1984}.
\end{proof}

\begin{remark}
 By the proof of Theorem~\ref{La}, the condition (2) implies (1) in Theorem~\ref{La}, and furthermore $X^2$ is a $k$-space.
\end{remark}

\maketitle
\section{The applications to free Abelian topological groups}
In this section, we mainly discuss the $k_{R}$-property in the free Abelian topological groups. Recently, T. Banakh in \cite{B2016} proved that $A(X)$ is a $k$-space if $A(X)$ is a $k_{R}$-space for a La\v{s}nev space $X$. Indeed, he obtained this result in wider classes of spaces. However, he did not discuss the following quesiton:

\begin{question}
Let $X$ be a space. For some $n\in\omega$, if $A_{n}(X)$ is a $k_{R}$-space, is $A_{n}(X)$ a $k$-space?
\end{question}

We shall give some answers to the above question and generalize some results of Yamada in the free Abelian topological groups. First, we give a characterization for some class of spaces such that $A_2(X)$ is a $k_{R}$-space if and only if $A_2(X)$ is a $k$-space.

\begin{theorem}\label{A2}
Let $X$ be a stratifiable Fr\'{e}chet-Urysohn space with a point-countable $k$-network. Then the following statements are equivalent:
\begin{enumerate}
\item $A_2(X)$ is a $k$-space;
\item $A_2(X)$ is a $k_R$-space;
\item the space $X$ is metrizable or $X$ is a locally $k_\omega$-space.
\end{enumerate}
\end{theorem}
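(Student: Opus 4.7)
The plan is to prove the cycle $(1)\Rightarrow(2)\Rightarrow(3)\Rightarrow(1)$. The implication $(1)\Rightarrow(2)$ is immediate since every Tychonoff $k$-space is a $k_R$-space. For $(3)\Rightarrow(1)$, I would use classical results: if $X$ is metrizable then $A_2(X)$ is a $k$-space by \cite{Y1993}, while if $X$ is a local $k_\omega$-space then $X$ is a topological sum of $k_\omega$-subspaces, and a standard argument shows $A(X)$ is itself locally $k_\omega$, so in particular $A_2(X)$ is a $k$-space.

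The main work is $(2)\Rightarrow(3)$, which I would prove by contrapositive: assume $X$ is neither metrizable nor a local $k_\omega$-space, and derive that $A_2(X)$ is not a $k_R$-space. Since $X$ is a stratifiable Fr\'echet-Urysohn space with a point-countable $k$-network that is not metrizable, the arguments in Cases 1.3 and 2.2 of the proof of Theorem~\ref{La} (via \cite[Lemma 8]{YanLin1999} and \cite[Corollary 3.10]{Lin1997}) force $X$ to contain a closed copy of $S_\omega$; moreover $X$ is La\v{s}nev by \cite[Corollary 3.6]{Sa97a}, hence has a compact-countable $k$-network. Since $X$ is not locally $k_\omega$, the extraction procedure in the proof of Proposition~\ref{network} produces a point $z_0\in X$ and a closed subspace $P\subseteq X$ with $P\in\mathfrak{T}$; by passing to closed subcopies we may assume the $S_\omega$-copy and $P$ are disjoint subsets of $X$ and that the base points $a_0\in S_\omega$ and $z_0\in P$ differ.

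The key step is to embed $S_\omega\times P$ as a closed subspace of $A_2(X)$ via the subtraction map
\[
\varphi\colon S_\omega\times P\to A_2(X), \qquad \varphi(a,p)=a-p.
\]
Continuity is clear, and injectivity follows from the disjointness of the two copies together with the uniqueness of reduced forms in $A(X)$. That $\varphi$ is a closed embedding is the standard fact that, for two disjoint closed subsets $A,B$ of a Tychonoff space, the map $A\times B\to A_2(X)$, $(a,b)\mapsto a-b$, is a closed embedding; this rests on the support argument, since any convergent net $a_k-b_k\to g$ in $A_2(X)$ has all its terms in a compact set with compact support in $X$, and reduced-form uniqueness then forces $g=a-b$ with $a\in A$, $b\in B$. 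Since $X$ is stratifiable, so is $A_2(X)$ (a classical result on free Abelian topological groups over stratifiable spaces), and therefore the closed subspace $\varphi(S_\omega\times P)$ of the $k_R$-space $A_2(X)$ is itself a $k_R$-space by \cite[Proposition 5.10]{BG2014}. This contradicts Lemma~\ref{M1}, which asserts that $S_\omega\times P$ is not a $k_R$-space for any $P\in\mathfrak{T}$.

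The main obstacle is the closed-embedding step for $\varphi$: verifying that $\varphi(S_\omega\times P)$ is closed in $A_2(X)$ and that $\varphi$ is a homeomorphism onto its image requires careful support- and reduced-length bookkeeping in $A_2(X)$, using that compact subsets of $A_2(X)$ have compact support in $X$. A secondary subtlety is confirming both the stratifiability of $A_2(X)$ and the inheritance of the $k_R$-property by closed subspaces under stratifiability --- both are standard but must be cited carefully.
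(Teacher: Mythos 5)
Your reduction of (2)$\Rightarrow$(3) has a genuine gap at the step ``since $X$ is not locally $k_\omega$, the extraction procedure in the proof of Proposition~\ref{network} produces a closed subspace $P\subseteq X$ with $P\in\mathfrak{T}$.'' Failure of local $k_\omega$-ness does not give you a point with a decreasing network all of whose members have non-compact closure, and in fact the implication is false: $X=S_{\omega_1}$ is a stratifiable (La\v{s}nev) Fr\'echet--Urysohn space with a point-countable $k$-network (take $\{\{\infty\}\}\cup\{\{x(\alpha,n)\}\}\cup\{\{\infty$-free tails of single legs$\}\}$), it is neither metrizable nor locally $k_\omega$, yet every closed metrizable subspace of $S_{\omega_1}$ is locally compact (a closed subspace meeting infinitely many legs infinitely contains a closed copy of $S_\omega$), so $S_{\omega_1}$ contains no closed copy of any member of $\mathfrak{T}$. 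Thus your contrapositive argument produces nothing for $X=S_{\omega_1}$, although the theorem asserts in particular that $A_2(S_{\omega_1})$ is not a $k_R$-space. This is exactly the case the paper treats separately: it first shows $X$ contains no closed copy of $S_{\omega_1}$ by pushing the strict $\mathrm{Cld}^{\omega}$-fan $\{X_n\}$ of Theorem~\ref{l2} through the closed map $i_2$ (using \cite[Proposition 3.4.3]{B2016}) to get a strict fan in $A_2(X)$, and only then excludes closed members of $\mathfrak{T}$ by a second fan argument (the fan of Lemma~\ref{M1} inside $X\times X$, made strict via \cite[Proposition 2.9.2]{B2016} and normality, then pushed through $i_2$); local $k_\omega$-ness finally follows from the cited equivalence \cite[Corollary 2.12]{LiuLin2006}: for a Fr\'echet space with a point-countable $k$-network consisting of sets with compact closures, locally $k_\omega$ is equivalent to containing no closed copy of $S_{\omega_1}$. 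Your closed-embedding of $S_\omega\times P$ via $(a,p)\mapsto a-p$ is a workable substitute for the paper's ``push the fan through $i_2$'' step (given that $i_2$ is a closed map), but it only covers the $\mathfrak{T}$-obstruction, not the $S_{\omega_1}$-obstruction.

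A secondary flaw is in (3)$\Rightarrow$(1): the claim that a topological sum of $k_\omega$-subspaces has $A(X)$ locally $k_\omega$ is false. The space $\mathbf{G}_1=\bigoplus_{\alpha<\omega_1}C_\alpha$ is a topological sum of compact metrizable pieces, yet $A_4(\mathbf{G}_1)$ is not even a $k_R$-space (Proposition~\ref{M3}), so $A(\mathbf{G}_1)$ is not a $k$-space and a fortiori not locally $k_\omega$. The needed conclusion is only about $A_2(X)$, and the paper gets it directly: under (3) the square $(X\cup\{0\}\cup(-X))^2$ is a $k$-space (metrizable, or locally $k_\omega$ since products of $k_\omega$-neighborhoods are $k_\omega$), and $i_2$ is a closed, hence quotient, map onto $A_2(X)$, so $A_2(X)$ is a $k$-space. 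You should replace your detour through $A(X)$ by this argument (or by Yamada's result that $A_2$ of a metrizable space is a $k$-space for the metrizable clause).
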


\begin{proof}
Obviously, we have (1) $\Rightarrow$ (2). It suffices to prove (3) $\Rightarrow$ (1) and (2) $\Rightarrow$ (3).

\smallskip
(3) $\Rightarrow$ (1). Since $(X\cup\{0\}\cup (-X))^2$ is a $k$-space and $i_2$ is a closed mapping, $A_2(X)$ is a $k$-space.

\smallskip
(2) $\Rightarrow$ (3). Use the same notations as in Theorem~\ref{l2}. First we claim that $X$ contains no copy of $S_{\omega_1}$. If not, let $Y_n$=$i_2(X_n)$ for each $n\in\mathbb{N}$. Since the family $\{X_n\}$ is a strict Cld$^\omega$-fan, it follows from \cite[Proposition 3.4.3]{B2016} that the family $\{Y_n\}$ is a strict Cld$^\omega$-fan, which implies that $A_2(X)$ is not a $k_R$-space, a contradiction. Therefore, $X$ contains no closed copy of $S_{\omega_1}$.

\smallskip
If $X$ contains no close copy of $S_\omega$, then it follows from \cite[Lemma 8]{YanLin1999} and \cite[Corollaries 3.9 and 3.10]{Lin1997} that $X$ has a point-countable base, thus it is metrizable since $X$ is stratifiable \cite{Gr1984}. Therefore, we may assume that $X$ contains a closed copy of $S_{\omega}$. Next we prove that $X$ is a locally $k_\omega$-space. Indeed we claim that $X$ contains no closed subspace belonging to $\mathfrak{T}$. Assume to the contrary that $X$ contains a closed subspace $P\in\mathfrak{T}$. By the proof of Lemma~\ref{M1}, $X\times X$ contains a closed Cld$^{\omega}$-fan. Since $X$ is an $\aleph$-space, it follows from \cite[Proposition 2.9.2]{B2016} and the normality of $(X\cup\{0\}\cup (-X))^2$ that $(X\cup\{0\}\cup (-X))^2$ contains a strict Cld$^{\omega}$-fan. Since $i_2$ is a closed mapping, it follows from \cite[Proposition 3.4.3]{B2016} that $A_2(X)$ contains a strict Cld$^{\omega}$-fan, which is a contradiction. Hence $X$ contains no closed subspace belonging to $\mathfrak{T}$, which means that every first-countable subspace is locally compact. Then it follows from \cite[Lemma 2.1]{LiuLin1997} that $X$ has a point-countable $k$-network whose elements have compact closures. Finally, since $X$ is a Fr\'{e}chet-Urysohn space with a point-countable $k$-network, it follows from \cite[Corollary 2.12]{LiuLin2006} or \cite[Corollary 5.4.10]{Lin2015} that $X$ is a locally $k_\omega$-space if and only if $X$ contains no closed copy of $S_{\omega_1}$, thus $X$ is a locally $k_\omega$-space.
\end{proof}

\begin{note}
By Theorem~\ref{A2}, it follows that $A_{2}(S_{\omega_{1}})$ is not a $k_{R}$-space.
\end{note}

 \begin{lemma}\label{k-R and k}
 Let $A(X)$ be a $k_{R}$-space. If each $A_{n}(X)$ is a normal $k$-space, then $A(X)$ is $k$-space.
 \end{lemma}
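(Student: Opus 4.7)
The plan is to verify the $k$-property of $A(X)$ directly: given any $F\subseteq A(X)$ with $F\cap K$ closed in $K$ for every compact $K\subseteq A(X)$, I will build, for each $x\in A(X)\setminus F$, a continuous real-valued function on $A(X)$ separating $x$ from $F$, and conclude $x\notin\overline{F}$. The $k_{R}$-hypothesis will be used only at the very end, to promote ``continuous on every compact subset'' to global continuity.

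First I would observe that $F\cap A_{n}(X)$ is closed in $A_{n}(X)$ for every $n$: since every compact subset of $A_{n}(X)$ is compact in $A(X)$, the trace $F\cap K$ is closed in $K$ for every compact $K\subseteq A_{n}(X)$, and the $k$-property of $A_{n}(X)$ forces the conclusion. Now fix $x\in A(X)\setminus F$ and take $m$ with $x\in A_{m}(X)$. I would then inductively construct continuous $h_{n}\colon A_{n}(X)\to[0,1]$ for $n\geq m$ satisfying $h_{n}(x)=1$, $h_{n}(F\cap A_{n}(X))\subseteq\{0\}$, and $h_{n+1}\uhr A_{n}(X)=h_{n}$. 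The base step is Urysohn's lemma in the normal space $A_{m}(X)$ applied to the closed set $F\cap A_{m}(X)$ and the point $x$. For the induction step, using that $A_{n}(X)$ is closed in $A_{n+1}(X)$ (a standard fact for free Abelian topological groups over Tychonoff spaces), the set $A_{n}(X)\cup(F\cap A_{n+1}(X))$ is closed in $A_{n+1}(X)$; the function equal to $h_{n}$ on $A_{n}(X)$ and to $0$ on $F\cap A_{n+1}(X)$ agrees on the overlap $F\cap A_{n}(X)$, so by the pasting lemma it is continuous on this closed union, and Tietze's theorem in the normal space $A_{n+1}(X)$ extends it to the desired $h_{n+1}$.

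Define $h\colon A(X)\to[0,1]$ by $h(y)=h_{n}(y)$ for any $n\geq m$ with $y\in A_{n}(X)$; the tower's compatibility makes this unambiguous. Now I would invoke the classical theorem of A.V. Arkhangel'ski\u{\i} (see \cite{AT2008}) that every compact subset of $A(X)$ is contained in $A_{n}(X)$ for some $n\in\N$. For such an $n$ (chosen $\geq m$), $h\uhr K=h_{n}\uhr K$ is continuous, and the $k_{R}$-property of $A(X)$ then yields continuity of $h$ on all of $A(X)$. Since $h(x)=1$ and $h(F)\subseteq\{0\}$, the open set $h^{-1}((1/2,\infty))$ is a neighborhood of $x$ disjoint from $F$; hence $x\notin\overline{F}$, $F$ is closed, and $A(X)$ is a $k$-space.

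The main obstacle is engineering the tower of $h_{n}$'s so that both the vanishing on $F$ and the compatibility with the previous level persist through the induction; this is where normality of each $A_{n}(X)$ is essential, via Urysohn at the base and Tietze at each successor stage. Once the tower is assembled, Arkhangel'ski\u{\i}'s theorem on compact subsets of $A(X)$ funnels every compactness check to a single level $A_{n}(X)$, and the $k_{R}$-property of $A(X)$ closes the argument.
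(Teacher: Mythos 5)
Your proposal is correct. It rests on the same key reduction as the paper's proof, namely that every compact subset of $A(X)$ lies in some $A_{n}(X)$ (\cite[Corollary 7.4.4]{AT2008}); but where the paper then simply invokes \cite[Lemma 2]{L2006}, which packages exactly the implication ``increasing closed cover by normal $k$-subspaces capturing all compacta $+$ $k_{R}$ $\Rightarrow$ $k$'', you in effect reprove that lemma in this setting. Your tower construction is sound: $F\cap A_{n}(X)$ is closed in $A_{n}(X)$ by the $k$-property of each level, Urysohn in the normal space $A_{m}(X)$ starts the tower, and at each successor stage the pasting lemma on the closed union $A_{n}(X)\cup (F\cap A_{n+1}(X))$ (the two pieces agree on $F\cap A_{n}(X)$, where $h_{n}$ vanishes) followed by Tietze in $A_{n+1}(X)$ extends it; the resulting $h$ is continuous on every compact set because each compact set sits inside some $A_{n}(X)$, so the $k_{R}$-property of $A(X)$ makes $h$ globally continuous and $h^{-1}\bigl((1/2,1]\bigr)$ separates $x$ from $F$. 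The trade-off is transparency versus brevity: your argument is self-contained and shows precisely where normality of the levels enters (Urysohn at the base, Tietze at each step), while the paper's two-line proof delegates this bookkeeping to Li's lemma.
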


 \begin{proof}
 It is well-known that each compact subset of $A(X)$ is contained in some $A_{n}(X)$ \cite[Corollary 7.4.4]{AT2008}. Hence it follows from \cite[Lemma 2]{L2006} that $A(X)$ is a $k$-space.
 \end{proof}

\begin{theorem}
Let $X$ be a paracompact $\sigma$-space\footnote{A regular space $X$ is called a {\it $\sigma$-space} if it has a $\sigma$-locally finite network.}. Then $A(X)$ is a $k_{R}$-space and each $A_{n}(X)$ is a $k$-space if and only if $A(X)$ is a $k$-space.
\end{theorem}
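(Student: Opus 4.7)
The plan is to reduce the nontrivial direction to Lemma~\ref{k-R and k} by checking that normality of each $A_{n}(X)$ is automatic under the hypothesis on $X$.

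The ($\Leftarrow$) direction is routine: if $A(X)$ is a $k$-space, then, being Tychonoff as every topological group is, it is automatically a $k_{R}$-space; and since each $A_{n}(X)$ is a closed subspace of $A(X)$ (a standard fact, see \cite{AT2008}), each $A_{n}(X)$ inherits the $k$-space property as a closed subspace.

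For the ($\Rightarrow$) direction, assume that $A(X)$ is a $k_{R}$-space and each $A_{n}(X)$ is a $k$-space. In order to apply Lemma~\ref{k-R and k}, the only additional fact I need is that each $A_{n}(X)$ is normal. This is where the hypothesis that $X$ is a paracompact $\sigma$-space enters. By a known preservation theorem of Arkhangel'skii--Tkachenko (cf.\ \cite{AT2008}), if $X$ is a paracompact $\sigma$-space then $A(X)$ itself is a paracompact $\sigma$-space, and in particular each closed subspace $A_{n}(X)$ is paracompact, hence normal. With normality in hand, Lemma~\ref{k-R and k} immediately yields that $A(X)$ is a $k$-space.

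The main point of potential difficulty is the normality claim for $A_{n}(X)$. If the preservation result for paracompact $\sigma$-spaces under the functor $A(\cdot)$ is not cited in the exact form needed, one falls back on the direct argument at the level of $A_{n}(X)$: the space $(X \oplus -X \oplus \{0\})^{n}$ is a paracompact $\sigma$-space, since the class of paracompact $\sigma$-spaces is closed under finite topological sums and finite products; the natural mapping $i_{n}:(X \oplus -X \oplus \{0\})^{n}\to A_{n}(X)$ is closed when $X$ is paracompact; and the class of paracompact $\sigma$-spaces is preserved by closed continuous mappings. Consequently $A_{n}(X)$ is a paracompact $\sigma$-space, and in particular normal, and the argument closes as above.
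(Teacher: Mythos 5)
Your proof is correct and takes essentially the same route as the paper: invoke \cite[Theorem 7.6.7]{AT2008} to conclude that $A(X)$ is a paracompact $\sigma$-space, hence each closed subspace $A_{n}(X)$ is normal, and then apply Lemma~\ref{k-R and k}, the converse direction being routine exactly as you describe. (Your fallback argument via closedness of $i_{n}$ for paracompact $X$ is not needed and relies on a less standard claim, but the primary argument already suffices.)
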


\begin{proof}
Since $X$ is a paracompact $\sigma$-space, it follows from \cite[Theorem 7.6.7]{AT2008} that $A(X)$ is also a paracompact $\sigma$-space, hence each $A_{n}(X)$ is normal. Now apply Lemma~\ref{k-R and k} to conclude the proof.
\end{proof}

In \cite{Y1993}, Yamada proved that $A_{4}(\mathbf{G}_{1})$ is not a $k$-space. Indeed, we prove that $A_{4}(\mathbf{G}_{1})$ is not a $k_{R}$-space.

Suppose that $\mathscr{U}_{X}$ is the universal uniformity of a space $X$. Fix an arbitrary $n\in\mathbb{N}$. For each $U\in\mathscr{U}_{X}$ let
$$W_{n}(U)=\{x_{1}-y_{1}+x_{2}-y_{2}+\cdots+x_{k}-y_{k}: (x_{i}, y_{i})\in U\ \mbox{for}\ i=1, \cdots, k, k\leq n\},$$and $\mathscr{W}_{n}=\{W_{n}(U): U\in\mathscr{U}_{X}\}$. Then the family $\mathscr{W}_{n}$ is a neighborhood base at $0$ in $A_{2n}(X)$ for each $n\in\mathbb{N}$, see \cite{Y1993, Y1997}.

\begin{proposition}\label{M3}
The subspace $A_{4}(\mathbf{G}_{1})$ is not a $k_{R}$-space.
\end{proposition}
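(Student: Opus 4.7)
The plan is to apply Lemma~\ref{strict 1} by exhibiting a strict $\mathrm{Cld}^\omega$-fan in $A_4(\mathbf{G}_1)$. The idea is to transplant the almost-disjoint-family construction from the proof of Theorem~\ref{l2} into the free Abelian group setting, using the standard neighborhood base $\mathscr{W}_2$ at the origin of $A_4(\mathbf{G}_1)$ recalled just before the statement. Fix, as in the proof of Theorem~\ref{l2}, two families $\mathscr{A}=\{A_\alpha\}_{\alpha<\omega_1}$ and $\mathscr{B}=\{B_\alpha\}_{\alpha<\omega_1}$ of infinite subsets of $\omega$ satisfying (a) $A_\alpha\cap B_\beta$ is finite for all $\alpha,\beta$, and (b) no $A\subseteq\omega$ makes all the sets $A_\alpha\setminus A$ and $B_\alpha\cap A$ finite. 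For each $k\in\mathbb{N}$ put
$$F_k=\{(c(\alpha,k)-c_\alpha)+(c(\beta,k)-c_\beta)\colon\alpha\neq\beta\in\omega_1,\ k\in A_\alpha\cap B_\beta\}\subseteq A_4(\mathbf{G}_1).$$
Each element of $F_k$ has reduced length $4$, so in particular $0\notin F_k$.

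I would then verify the three properties required of a $\mathrm{Cld}^\omega$-fan. First, closedness of each $F_k$: for a nonzero $g\in A_4(\mathbf{G}_1)$ the finite support of $g$ lies in finitely many of the clopen summands $C_\alpha$ of $\mathbf{G}_1$, and the topological-group structure of $A(\mathbf{G}_1)$, together with the fact that the universal uniformity of $\mathbf{G}_1$ separates distinct $C_\alpha$'s, yields a neighborhood of $g$ meeting $F_k$ in at most finitely many points; for $g=0$, the neighborhood $W_2(U)$ with $U$ corresponding to the constant function $\omega_1\ni\alpha\mapsto k+1$ is disjoint from $F_k$. Second, compact-finiteness: for compact $K\subseteq A_4(\mathbf{G}_1)$, $\mathrm{supp}(K)$ is a compact subset of the topological sum $\mathbf{G}_1$ and hence sits in $\bigcup_{\alpha\in S}C_\alpha$ for a finite $S\subseteq\omega_1$, whence $\{k\colon F_k\cap K\neq\varnothing\}\subseteq\bigcup_{\alpha,\beta\in S}(A_\alpha\cap B_\beta)$ is finite by condition (a). Third, $0\in\overline{\bigcup_k F_k}$: a basic neighborhood $W_2(U)$ of $0$ is determined by some $h\colon\omega_1\to\omega$, and applying \cite[Lemma 7.6.22]{AT2008} (or the combinatorial content of (a) and (b)) to the pair $(h,h)$ furnishes $\alpha\neq\beta$ and $k\in A_\alpha\cap B_\beta$ with $k\geq\max\{h(\alpha),h(\beta)\}$, producing an element of $F_k\cap W_2(U)$.

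It remains to promote $\{F_k\}$ from a $\mathrm{Cld}^\omega$-fan to a strict one. Because $\mathbf{G}_1$ is metrizable, $A_4(\mathbf{G}_1)$ is a La\v{s}nev (hence stratifiable and normal) space, so I would follow the template used at the end of the proof of Lemma~\ref{M1}: apply \cite[Proposition 2.9.2]{B2016} in a normal $\aleph$-setting to upgrade our compact-finite family to a strongly compact-finite one, and then use normality to convert $\mathbb{R}$-open neighborhoods into functional ones. The main obstacle I expect is verifying the $\aleph$-space hypothesis of \cite[Proposition 2.9.2]{B2016} for $A_4(\mathbf{G}_1)$. If that hypothesis proves delicate, a direct construction of the required functional neighborhoods is available: the characteristic-type map $\phi_k\colon\mathbf{G}_1\to[0,1]$ sending each $c(\alpha,k)$ to $1$ and every other point to $0$ is continuous (since $\{c(\alpha,k)\colon\alpha<\omega_1\}$ is clopen in $\mathbf{G}_1$), its homomorphism extension $\bar\phi_k\colon A(\mathbf{G}_1)\to\mathbb{R}$ takes value $2$ on $F_k$ and $0$ on each $F_{k'}$ with $k'\neq k$, and a suitable thinning of the preimages $\bar\phi_k^{-1}((3/2,5/2))$ will preserve compact-finiteness.
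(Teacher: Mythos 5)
Your fan is the same one the paper uses: your $F_k$ are (up to the harmless restriction $\alpha\neq\beta$) exactly the sets $X_n$ in the paper's proof, and your verifications of compact-finiteness (closure of $\mathrm{supp}(K)$ compact in the paracompact space $A_4(\mathbf{G}_1)$, hence contained in finitely many summands $C_\alpha$, then property (a)) and of $0\in\overline{\bigcup_kF_k}$ (property (b) applied to the function defining $W_2(U)$) are the paper's arguments. For closedness of each $F_k$ the paper is cleaner where you are vague: it notes $Z=\mathrm{supp}(F_k)$ is closed and discrete in $\mathbf{G}_1$, so by Uspenskii's theorem $A(Z)$ is a discrete closed subgroup of $A(\mathbf{G}_1)$ and $F_k\subset A_4(Z)$ is closed; your claim that a generic nonzero $g$ has ``a neighborhood meeting $F_k$ in at most finitely many points'' is not an argument as written, though it is repairable along those lines.

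The genuine gap is the strictness step, which is the real content here. Your primary route invokes \cite[Proposition 2.9.2]{B2016}, whose hypothesis is that the ambient space is an $\aleph$-space; this is available for $A_3(\mathbf{G}_0)$ (a normal $\aleph_0$-space, and that is how the paper argues in Proposition~\ref{M2}), but not for $A_4(\mathbf{G}_1)$: the set $\{c(\alpha,n)-c_\alpha:\alpha<\omega_1,\ n\in\mathbb{N}\}\cup\{0\}$ is (easily checked to be) a closed copy of $S_{\omega_1}$ in $A_4(\mathbf{G}_1)$, and since closed subspaces of $\aleph$-spaces are $\aleph$-spaces while $S_{\omega_1}$ is not, this route collapses. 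Your fallback also fails as stated: the family $\{\bar\phi_k^{-1}((3/2,5/2))\}_k$ is badly non-compact-finite --- the compact convergent sequence $\{2c(\alpha,k):k\in\mathbb{N}\}\cup\{2c_\alpha\}$ meets every member, and length-$4$ variants such as $\{2c(\alpha,k)-c_\gamma-c_\delta\}_k\cup\{2c_\alpha-c_\gamma-c_\delta\}$ show that intersecting with $A_4\setminus A_3$ does not help --- so ``a suitable thinning will preserve compact-finiteness'' is precisely the missing construction, not a routine adjustment. The paper supplies it explicitly: it encloses $X_n$ in the open set $U_n=\{c(\alpha,n)-x+c(\beta,n)-y:\ n\in A_\alpha\cap B_\beta,\ x\in C_\alpha^{n},\ y\in C_\beta^{n}\}$ (open because $A_4(\mathbf{G}_1)\setminus A_3(\mathbf{G}_1)$ is open), whose compact-finiteness again combines the finiteness of the summands meeting $\mathrm{supp}(K)$ with almost-disjointness (a) --- note that $U_n$ retains the constraint $n\in A_\alpha\cap B_\beta$ and pins the positive letters to $c(\cdot,n)$, exactly the structure your $\bar\phi_k$-preimages discard --- and then uses paracompactness (hence normality) of $A_4(\mathbf{G}_1)$, which is a paracompact $\sigma$-space since $\mathbf{G}_1$ is metrizable, to convert these open neighborhoods into functional ones.
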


\begin{proof}
It suffices to prove that $A_{4}(\mathbf{G}_{1})$ contains a strict Cld$^{\omega}$-fan. Let $\mathscr{A}=\{A_{\alpha}: \alpha\in\omega_{1}\}$ and $\mathscr{B}=\{B_{\alpha}: \alpha\in\omega_{1}\}$ be two families of infinite subsets of $\omega$ as in the proof of Theorem~\ref{l2}. For each $n\in\mathbb{N}$, put $$X_{n}=\{c(\alpha, n)-c_{\alpha}+c(\beta, n)-c_{\beta}: n\in A_{\alpha}\cap B_{\beta}, \alpha, \beta\in\omega_{1}\}.$$ It suffices to prove the following three statements.

\smallskip
(1) The family $\{X_{n}\}$ is strictly compact-finite in $A_{4}(\mathbf{G}_{1})$.

\smallskip
Since $\mathbf{G}_{1}$ is a La\v{s}nev space, it follows from \cite[Theorem 7.6.7]{AT2008} that $A(\mathbf{G}_{1})$ is also a paracompact $\sigma$-space, hence $A_{4}(\mathbf{G}_{1})$ is paracompact (and thus normal). Hence it suffices to prove that the family $\{X_{n}\}$ is strongly compact-finite in $A_{4}(\mathbf{G}_{1})$. For each $\alpha\in\omega_{1}$ and $n\in\mathbb{N}$, let $C_{\alpha}^{n}=C_{\alpha}\setminus\{c(\alpha, m): m\leq n\},$ and put $$U_{n}=\{c(\alpha, n)-x+c(\beta, n)-y: n\in A_{\alpha}\cap B_{\beta}, \alpha, \beta\in\omega_{1}, x\in C_{\alpha}^{n}, y\in C_{\beta}^{n}\}.$$
Obviously, each $X_{n}\subset A_{4}(\mathbf{G}_{1})\setminus A_{3}(\mathbf{G}_{1})$. Since $A_{4}(\mathbf{G}_{1})\setminus A_{3}(\mathbf{G}_{1})$ is open in $A_{4}(\mathbf{G}_{1})$, it follows from \cite[Corollary 7.1.19]{AT2008} that each $U_{n}$ is open in $A_{4}(\mathbf{G}_{1})$. We claim that the family $\{U_{n}\}$ is compact-finite in $A_{4}(\mathbf{G}_{1})$.
If not, then there exist a compact subset $K$ in $A_{4}(\mathbf{G}_{1})$ and a subsequence $\{n_{k}\}$ in $\mathbb{N}$ such that $K\cap U_{n_{k}}\neq\emptyset$ for each $k\in\mathbb{N}$. For each $k\in\mathbb{N}$, choose an arbitrary point $$z_{k}=c(\alpha_{k}, n_{k})-x_{k}+c(\beta_{k}, n_{k})-y_{k}\in K\cap U_{n_{k}},$$ where $x_{k}\in C_{\alpha_{k}}^{n_{k}}$ and $y_{k}\in C_{\beta_{k}}^{n_{k}}$. Since $A_{4}(\mathbf{G}_{1})$ is paracompact, it follows from \cite{AOP1989} that the closure of the set supp($K$) is compact in $\mathbf{G}_{1}$. Therefore, there exists $N\in\mathbb{N}$ such that $$\mbox{supp}(K)\cap \bigcup \{C_{\alpha}: \alpha\in\omega_{1}\setminus\{\gamma_{i}\in\omega_{1}: i\leq N\}\}=\emptyset,$$ that is, supp($K$$)\subset \bigcup_{\alpha\in\{\gamma_{i}\in\omega_{1}: i\leq N\}} C_{\alpha}.$ Since each $z_{k}\in K$, there exists $$\alpha_{k}, \beta_{k}\in\{\gamma_{i}\in\omega_{1}: i\leq N\}$$ such that $A_{\alpha_{k}}\cap B_{\beta_{k}}$ is an infinite set, which is a contradiction since $A_{\alpha}\cap B_{\beta}$ is finite for all $\alpha, \beta<\omega_{1}$.

\smallskip
(2) Each $X_{n}$ is closed in $A_{4}(\mathbf{G}_{1})$.

\smallskip
Fix an arbitrary $n\in\mathbb{N}$. Next we prove that $X_{n}$ is closed in $A_{4}(\mathbf{G}_{1})$. Let $Z=\mbox{supp}(X_{n})$. The set $Z$ is a closed discrete subset of $\mathbf{G}_{1}$. Since $\mathbf{G}_{1}$ is metriable, it follows from \cite{U1991} that $A(Z)$ is topologically isomorphic to a closed subgroup of $A(\mathbf{G}_{1})$, hence $A_{4}(Z)$ is a closed subspace of $A_{4}(\mathbf{G}_{1})$. Since $A(Z)$ is discrete and $X_{n}\subset A_{4}(Z)$, the set $X_{n}$ is closed in $A_{4}(Z)$ (and thus closed in $A_{4}(\mathbf{G}_{1})$).

\smallskip
(3) The family $\{X_{n}\}$ is not locally finite at the point $0$ in $A_{4}(\mathbf{G}_{1})$.

\smallskip
Indeed, it suffices to prove that $0\in\overline{\bigcup_{n\in\mathbb{N}}X_{n}}\setminus\bigcup_{n\in\mathbb{N}}X_{n}$. Take an arbitrary $U$ that belongs to the universal uniformity on $\mathbf{G}_{1}$. We shall prove $W_{2}(U)\cap \bigcup_{n\in\mathbb{N}}X_{n}\neq\emptyset$.

\smallskip
Indeed, we can choose a function $f: \omega_{1}\rightarrow \omega$ such that $$V=\Delta_{\mathbf{G}_{1}}\cup\bigcup_{\alpha\in\omega_{1}}C_{\alpha}^{f(\alpha)}\times C_{\alpha}^{f(\alpha)}\subset U.$$ For each $\alpha<\omega_{1}$, put $$A_{\alpha}^{\prime}=\{n\in A_{\alpha}: n\geq f(\alpha)\}$$ and $$B_{\alpha}^{\prime}=\{n\in B_{\alpha}: n\geq f(\alpha)\}.$$ By the condition (b) of the families $\mathscr{A}$ and $\mathscr{B}$, it is easy to see that there exist $\alpha, \beta\in\omega_{1}$ such that $A_{\alpha}^{\prime}\cap B_{\beta}^{\prime}\neq\emptyset$. So, choose $n\in A_{\alpha}^{\prime}\cap B_{\beta}^{\prime}$. Then both $(c(\alpha, n), c_{\alpha})$ and $(c(\beta, n), c_{\beta})$ belong to $V$, thus $$c(\alpha, n)-c_{\alpha}+c(\beta, n)-c_{\beta}\in W_{2}(V)\subset W_{2}(U),$$ which shows $W_{2}(U)\cap X_{n}\neq\emptyset$ (and thus $W_{2}(U)\cap \bigcup_{n\in\mathbb{N}}X_{n}\neq\emptyset$).
\end{proof}

\begin{lemma}\label{c1}
Let $X$ be a space. For each $n\in\mathbb{N}$, the subspace $A_{2^{n}-1}(X)$ of $A(X)$ contains a closed copy of $X^{n}$.
\end{lemma}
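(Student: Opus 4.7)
My approach is to exhibit an explicit closed embedding via a binary-encoding trick. Define $\phi_n : X^n \to A_{2^n-1}(X)$ by
$$\phi_n(x_1, \ldots, x_n) = \sum_{i=1}^n 2^{i-1} x_i,$$
understood as the formal word $x_1 + (x_2 + x_2) + (x_3 + x_3 + x_3 + x_3) + \cdots + (x_n + \cdots + x_n)$, in which $x_i$ is repeated exactly $2^{i-1}$ times. The total number of summands is $\sum_{i=1}^{n} 2^{i-1} = 2^n - 1$, so $\phi_n(X^n) \subseteq A_{2^n-1}(X)$, and $\phi_n$ is continuous as it is assembled from the group addition in $A(X)$ applied to the inclusion $X \hookrightarrow A(X)$.

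Injectivity of $\phi_n$ hinges on uniqueness of binary representation: if $\phi_n(x_1, \ldots, x_n) = \phi_n(y_1, \ldots, y_n)$, then comparing coefficients at each $z \in X$ in the free Abelian group $A(X)$ gives $\sum_{i : x_i = z} 2^{i-1} = \sum_{i : y_i = z} 2^{i-1}$, so the subsets $\{i : x_i = z\}$ and $\{i : y_i = z\}$ of $\{1, \ldots, n\}$ coincide for every $z$, forcing $x_i = y_i$ for all $i$. In particular, $\phi_n^{-1}$ on the image admits an explicit binary-decoding description: given $g = r_1 z_1 + \cdots + r_k z_k$ in reduced form with distinct $z_j \in X$ and $r_j > 0$, the binary expansion of each $r_j$ recovers the indices $i$ with $x_i = z_j$.

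To promote $\phi_n$ from a continuous injection to a topological embedding I would factor $\phi_n = \mu \circ \Delta$, where $\Delta : X^n \to X^{2^n-1}$ is the generalized diagonal repeating the $i$-th coordinate $2^{i-1}$ times, and $\mu$ is the restriction to $X^{2^n-1}$ of the natural map $i_{2^n-1}$. The diagonal $\Delta$ is a closed topological embedding, being a composition of component-wise diagonals into a finite power. Continuity of $\phi_n^{-1}$ on the image then reduces to the explicit binary decoding above combined with the standard behaviour of convergence in $A_{2^n-1}(X)$, namely that in a net converging in $A(X)$ the reduced supports and coefficients are eventually controlled.

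The main obstacle will be verifying closedness of $\phi_n(X^n)$ in $A_{2^n-1}(X)$, since $\mu$ is not generally a closed map. My plan is to take a net $\phi_n(x^{(\alpha)}) \to g$ in $A_{2^n-1}(X)$ and use that convergence in $A(X)$, together with the rigid binary-partition structure of the coefficients in elements of $\phi_n(X^n)$, forces (after passing to a cofinal subnet) each coordinate $x_i^{(\alpha)}$ to converge to some $y_i \in X$, yielding $y = (y_1, \ldots, y_n) \in X^n$ with $\phi_n(y) = g$. The injectivity argument guarantees that this decoding is unambiguous, so that once the neighborhood-basis computations in the topological group $A(X)$ are carried out, $\phi_n$ is the sought closed embedding of $X^n$ into $A_{2^n-1}(X)$.
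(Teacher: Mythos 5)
Your map is exactly the one the paper uses: $f(x_1,\ldots,x_n)=x_1+2x_2+\cdots+2^{n-1}x_n$, and your injectivity argument via uniqueness of binary expansions is fine, as is continuity. But the entire content of the lemma is the remaining claim that this map is a topological embedding with closed image in $A_{2^n-1}(X)$, and that part of your write-up is only a plan, not a proof. Moreover, the principle you lean on --- that ``in a net converging in $A(X)$ the reduced supports and coefficients are eventually controlled'' --- is false in general: because of cancellation, a net such as $x_\alpha-y_\alpha$ with $(x_\alpha,y_\alpha)$ running through an entourage of the universal uniformity converges to $0$ while its supports escape every compact set. So one cannot simply ``pass to a cofinal subnet and force each coordinate to converge'' by appealing to standard convergence behaviour; the argument must exploit the positivity of the coefficients (no cancellation against $-X$) together with an explicit description of the topology of $A(X)$ --- e.g.\ the neighbourhoods $W_n(U)$ built from the universal uniformity that the paper itself recalls in Section 4, or Graev's extension of continuous pseudometrics --- to bound coordinatewise distances by group distances and to show that a limit word must again have coefficients with pairwise disjoint binary supports. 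None of these computations are carried out in your proposal, and they are not routine.

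The paper avoids this work by citing the proof of \cite[Corollary 7.1.16]{AT2008}, which is precisely the statement that this binary-coefficient map is a closed embedding of $X^n$ into $A(X)$. So either invoke that result, as the paper does, or complete your sketch: show that for every continuous pseudometric $d$ on $X$ its Graev extension $\hat d$ satisfies an estimate of the form $\hat d\bigl(\phi_n(x),\phi_n(y)\bigr)\geq \min_i d(x_i,y_i)$ (possible only because all coefficients are positive and the binary blocks are rigid), which yields both continuity of $\phi_n^{-1}$ and, combined with the closedness of $A_{2^n-1}(X)$ in $A(X)$ and the characterization of the image as the words whose coefficients have pairwise disjoint binary expansions, the closedness of $\phi_n(X^n)$. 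As it stands, the key step is asserted rather than proved, and its stated justification is incorrect.
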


\begin{proof}
Let the mapping $f: X^{n}\rightarrow A(X)$ defined by $$f(x_{1},\cdots,  x_{n})=x_{1}+2x_{2}+\cdots +2^{n-1}x_{n}$$ for each $(x_{1}, \cdots, x_{n})\in X^{n}$. It follows from the proof of \cite[Corollary 7.1.16]{AT2008} that $f$ is a homeomorphic mapping from $X^{n}$ onto $f(X^{n})$.
Then $A_{2^{n}-1}(X)$ contains a closed copy of $X^{n}$.
\end{proof}

Now we can prove the following one of the main results in this paper.

\begin{theorem}\label{k-space-characterization}
Let $X$ be a non-metrizable stratifiable $k$-space with a compact-countable $k$-network. Then the following statements are equivalent:
\begin{enumerate}
\item $A(X)$ is a sequential space;
\item $A(X)$ is a $k_{R}$-space;
\item each $A_n(X)$ is a $k_{R}$-space;
\item $A_4(X)$ is a $k_{R}$-space;
\item the space $X$ is the topological sum of a $k_{\omega}$-space and a discrete space.
\end{enumerate}
\end{theorem}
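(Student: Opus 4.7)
The plan is to establish the equivalences via the cycle $(1)\Rightarrow(2)\Rightarrow(3)\Rightarrow(4)\Rightarrow(5)\Rightarrow(1)$. The implication $(1)\Rightarrow(2)$ is immediate: every sequential space is a $k$-space, and every Tychonoff $k$-space is a $k_{R}$-space. For $(2)\Rightarrow(3)$, I first note that, as a stratifiable space, $X$ is a paracompact $\sigma$-space, so by \cite[Theorem 7.6.7]{AT2008} the group $A(X)$ is itself a paracompact $\sigma$-space, in particular stratifiable. Since each $A_{n}(X)$ is closed in $A(X)$ and the $k_{R}$-property is inherited by closed subspaces of stratifiable spaces \cite[Proposition 5.10]{BG2014}, every $A_{n}(X)$ is a $k_{R}$-space. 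The implication $(3)\Rightarrow(4)$ is trivial.

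The core step is $(4)\Rightarrow(5)$. By Lemma~\ref{c1}, $X^{2}$ embeds as a closed subset of $A_{3}(X)$; since $A_{3}(X)$ is itself closed in the stratifiable space $A_{4}(X)$, the hypothesis together with \cite[Proposition 5.10]{BG2014} forces $X^{2}$ to be a $k_{R}$-space. Applying Theorem~\ref{La}(1), and using that $X$ is not metrizable, I obtain a decomposition $X=\bigoplus_{\alpha\in A}X_{\alpha}$ into $k_{\omega}$-subspaces. The task is to verify that the set $A'=\{\alpha\in A: X_{\alpha}\text{ is non-discrete}\}$ is at most countable. Suppose, toward a contradiction, that $|A'|\geq\omega_{1}$. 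Each non-discrete $X_{\alpha}$, being a stratifiable $k_{\omega}$-space with countable $k$-network, contains a non-trivial convergent sequence $C_{\alpha}\subset X_{\alpha}$ that is closed in $X$. Choosing such sequences for $\omega_{1}$-many indices in $A'$ yields a closed copy of $\mathbf{G}_{1}$ inside $X$. By Uspenskii's theorem \cite{U1991}, $A_{4}(\mathbf{G}_{1})$ then embeds as a closed subspace of $A_{4}(X)$, hence is itself a $k_{R}$-space by \cite[Proposition 5.10]{BG2014} --- contradicting Proposition~\ref{M3}. Therefore $A'$ is countable; the sum of the non-discrete $X_{\alpha}$'s is a $k_{\omega}$-space $Y$, while the remaining summands form a discrete space $D$, giving $X = Y\oplus D$.

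For $(5)\Rightarrow(1)$, when $X = Y\oplus D$ with $Y$ a $k_{\omega}$-space and $D$ discrete, Yamada's work \cite{Y1993} together with standard results on free Abelian topological groups over topological sums shows that $A(X)$ decomposes topologically as the direct product of $A(Y)$, which is a $k_{\omega}$-space (and thus sequential), and the discrete group $A(D)$; such a product is easily seen to be sequential, so $A(X)$ is sequential. The chief obstacle in the whole argument is $(4)\Rightarrow(5)$: the cardinality bound on the non-discrete summands must be squeezed out of the failure of the $k_{R}$-property of $A_{4}(\mathbf{G}_{1})$ (Proposition~\ref{M3}), which is the only place where the specific bound $4$ is used and which depends crucially on the Uspenskii embedding of $A(Z)$ as a closed subgroup of $A(X)$ whenever $Z$ is closed in $X$.
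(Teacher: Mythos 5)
Your argument follows essentially the same route as the paper: stratifiability of $A(X)$ plus \cite[Proposition 5.10]{BG2014} for $(2)\Rightarrow(3)$, Lemma~\ref{c1} and Theorem~\ref{La} to get the $k_\omega$-decomposition, the closed copy of $\mathbf{G}_{1}$ together with Proposition~\ref{M3} and the embedding from \cite{U1991} to bound the non-discrete summands, and $A(X)\cong A(Y)\times A(D)$ for $(5)\Rightarrow(1)$. The only organizational difference is in $(4)\Rightarrow(5)$: the paper first proves that $\mbox{NI}(X)$ is $\omega_1$-compact (its Claim 1), building $\mathbf{G}_{1}$ from a closed discrete subset of $\mbox{NI}(X)$ via paracompactness, and only then invokes Theorem~\ref{La}; you invoke Theorem~\ref{La} first and then build $\mathbf{G}_{1}$ by picking one nontrivial convergent sequence in each of $\omega_1$-many non-discrete clopen summands. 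This is equivalent in substance and arguably a bit cleaner, since the clopen summands make the union of the chosen sequences automatically closed and a topological sum, with no appeal to paracompactness needed; note that the existence of the convergent sequences uses sequentiality of $X$ (a $k$-space whose compact subsets are metrizable, by stratifiability), which you should say explicitly, just as the paper does in Claim 1.

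One justification in your $(2)\Rightarrow(3)$ is faulty as written: you deduce stratifiability of $A(X)$ from the statement that $A(X)$ is a paracompact $\sigma$-space (via \cite[Theorem 7.6.7]{AT2008}), but being a paracompact $\sigma$-space does not imply stratifiability --- stratifiability is strictly stronger, so the parenthetical ``in particular stratifiable'' is not a valid inference. Since stratifiability of $A(X)$ is exactly what drives the closed-subspace heredity of the $k_R$-property throughout your proof (in $(2)\Rightarrow(3)$ and again in $(4)\Rightarrow(5)$ when passing to $A_3(X)$, $X^2$ and $A_4(\mathbf{G}_1)$), you need Sipacheva's theorem that $A(X)$ is stratifiable whenever $X$ is stratifiable \cite{S1993}, which is precisely what the paper cites; with that substitution the step is sound. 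Finally, in $(5)\Rightarrow(1)$ the remark that the product of a $k_\omega$-space and a discrete space is sequential needs the observation that compact subsets of $A(Y)$ are metrizable (again by stratifiability of $A(Y)$), since $k_\omega$ alone does not imply sequentiality; the paper glosses this point in the same way, so this is a shared, easily repaired omission rather than a defect of your approach.
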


\begin{proof}
The implications of (1) $\Rightarrow$ (2) and (3) $\Rightarrow$ (4) are trivial. Since $X$ is stratifiable, $A(X)$ is a stratifiable space by \cite{S1993}.  By \cite[Proposition 5.10]{BG2014}, the implication of (2) $\Rightarrow$ (3) holds. It suffices to prove (5) $\Rightarrow$ (1) and (4) $\Rightarrow$ (5).

\smallskip
(5) $\Rightarrow$ (1). Let $X=Y\bigoplus D$, where $Y$ is a $k_{\omega}$-space and $D$ a discrete space. It is well-known that $A(X)$ is topologically isomorphic to $A(Y)\times A(D)$. Since $A(Y)$ is a $k_{\omega}$-space by \cite[Theorem 7.4.1]{AT2008} and $A(D)$ is a discrete space, it follows that $A(X)$ is a $k$-space (and thus a sequential space).

\smallskip
(4) $\Rightarrow$ (5). First, we show the following claim.

\smallskip
{\bf Claim 1}: The subspace $\mbox{NI}(X)$ is $\omega_1$-compact\footnote{Recall that a space is called $\omega_1$-{\it compac} if every uncountable subset of $X$ has a cluster point.}.

\smallskip
If not, then there exists a closed, discrete and uncountable subset $\{x_\alpha: \alpha<\omega_1\}$ in $\mbox{NI}(X)$. Since $X$ is paracompact and $\mbox{NI}(X)$ is closed in $X$, there is an uncountable and discrete collection of open subsets $\{U_\alpha: \alpha<\omega_1\}$ in $X$ such that $x_\alpha\in U_\alpha$ for each $\alpha<\omega_1$. For each $\alpha<\omega_1$, since $X$ is sequential and $X\setminus\{x_{\alpha}\}$ is not sequentially closed, there exists a non-trivial sequence $\{x(n, \alpha): n\in \mathbb{N}\}$ converging to $x_\alpha$ in $X$. For each $\alpha<\omega_1$, let $$C_\alpha=\{x(n, \alpha): n\in \mathbb{N}\}\cup \{x_\alpha\}$$ and put $$Z=\bigcup\{C_\alpha: \alpha<\omega_1\}.$$ Obviously, $Z$ is homeomorphic to $\mathbf{G}_{1}$. Without loss of generality, we may assume that $Z=\mathbf{G}_{1}$. Since $\mathbf{G}_{1}$ is a closed subset of $X$ and $X$ is a La\v{s}nev space, it follows from \cite{U1991} that the subspace $A_4(\mathbf{G}_{1})$ is homeomorphic to a closed subset of $A_4(X)$, thus $A_4(\mathbf{G}_{1})$ is a $k_{R}$-subspace. However, by Proposition~\ref{M3}, the subspace $A_4(\mathbf{G}_{1})$ is not a $k_{R}$-subspace, which is a contradiction. Therefore, Claim 1 holds.

By the stratifiability of $X$, each compact subset of $X$ is metrizable \cite{Gr1984}. Then it follows from Theorem~\ref{La} and Lemma~\ref{c1} that $X$ is the topological sum of a family of $k_{\omega}$-spaces. Let $X=\bigoplus_{\alpha\in A} X_{\alpha}$, where each $X_{\alpha}$ is a $k_{\omega}$-space. Let $$A^{\prime}=\{\alpha\in A: X_{\alpha}\ \mbox{is non-discrete}\}.$$ By Claim 1, the set $A^{\prime}$ is countable, hence $X$ is the topological sum of a $k_{\omega}$-space and a discrete space.
\end{proof}

\begin{remark}
The space $X$ is a non-metrizable space in Theorem~\ref{k-space-characterization}. It is natural to ask what happen when $X$ is a metrizable space. Now we give an answer to this question, see Theorem~\ref{A4} below. First we generalize a result of Yamada in \cite{Y1993}, where Yamada proved that $A_{3}(\mathbf{G}_{0})$ is not a $k$-space. Indeed, we prove that $A_{3}(\mathbf{G}_{0})$ is not a $k_{R}$-space.
\end{remark}

\begin{proposition}\label{M2}
The subspace $A_{3}(\mathbf{G}_{0})$ is not a $k_{R}$-space.
\end{proposition}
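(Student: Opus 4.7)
The plan is to mimic the proof of Proposition~\ref{M3}: construct a strict $\mathrm{Cld}^{\omega}$-fan inside $A_{3}(\mathbf{G}_{0})$ and invoke Lemma~\ref{strict 1}. Write $C_{i}=\{c(i,k):k\in\N\}\cup\{c_{i}\}$ with $c(i,k)\to c_{i}$, $X_{0}=\{x_{0}\}$, and $X_{i}=\{x(i,k):k\in\N\}$ (discrete open in $P_{0}$) for $i\geq 1$; for each $n\in\N$ set
$$F_{n}=\{x(n,k)+c(n,k)-c_{n}:k\in\N\}\subseteq A_{3}(\mathbf{G}_{0}).$$
A new phenomenon not present in Proposition~\ref{M3} is a weight obstruction: every length-$3$ word has weight (sum of coefficients) $\pm 1$ or $\pm 3$, whereas every basic neighbourhood $W_{k}(U)$ of $0$ in $A(\mathbf{G}_{0})$ consists only of weight-$0$ elements, so no fan of length-$3$ words can accumulate at $0$. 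I instead aim for accumulation at the weight-$1$ element $x_{0}\in A_{1}\subseteq A_{3}(\mathbf{G}_{0})$, which is compatible with the weight of each element of each $F_{n}$.

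For the closedness of each $F_{n}$: since $\mathbf{G}_{0}$ is metrizable, $A_{3}(\mathbf{G}_{0})=i_{3}((\mathbf{G}_{0}\oplus-\mathbf{G}_{0}\oplus\{0\})^{3})$ is La\v{s}nev, hence Fr\'{e}chet--Urysohn. Any convergent subsequence $\{x(n,k_{j})+c(n,k_{j})-c_{n}\}_{j}$ would admit a coordinate-convergent representative in $(\mathbf{G}_{0}\oplus-\mathbf{G}_{0}\oplus\{0\})^{3}$, which in particular forces $x(n,k_{j})$ to converge in the closed discrete set $X_{n}$; this stabilises $k_{j}$ and puts the limit back in $F_{n}$. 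For the compact-finiteness of $\{F_{n}\}$: by \cite{AOP1989} the support of any compact $K\subset A_{3}(\mathbf{G}_{0})$ has compact closure $L$ in $\mathbf{G}_{0}$, and the pairwise disjoint clopen summands $C_{i}$ of $\mathbf{G}_{0}$ force $L$ to meet only finitely many of them; for every $n$ outside that finite set, $c_{n}\notin L\supseteq\mathrm{supp}(K)$, yet $c_{n}\in\mathrm{supp}(g)$ for all $g\in F_{n}$, so $K\cap F_{n}=\emptyset$.

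To see that $\{F_{n}\}$ is not locally finite at $x_{0}$, setting $z_{n,k}=x(n,k)+c(n,k)-c_{n}$ I compute
$$z_{n,k}-x_{0}=(x(n,k)-x_{0})+(c(n,k)-c_{n})\in W_{2}(U)$$
whenever $(x(n,k),x_{0})\in U$ and $(c(n,k),c_{n})\in U$. Since $x(n,k)\to x_{0}$ in $P_{0}$ as $n\to\infty$ (uniformly in $k$, via the base $V_{j}=X_{0}\cup\bigcup_{i\geq j}X_{i}$ at $x_{0}$) and $c(n,k)\to c_{n}$ in $C_{n}$ as $k\to\infty$, every entourage $U$ of the universal uniformity on $\mathbf{G}_{0}$ admits, for each sufficiently large $n$, some $k$ with $z_{n,k}\in(x_{0}+W_{2}(U))\cap A_{3}\cap F_{n}$. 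Because $\{(x_{0}+W_{2}(U))\cap A_{3}:U\in\mathscr{U}_{\mathbf{G}_{0}}\}$ is a neighbourhood base at $x_{0}$ in $A_{3}(\mathbf{G}_{0})$ (being inherited from the basis $\mathscr{W}_{2}$ at $0$ in $A_{4}(\mathbf{G}_{0})\supset A_{3}$ via group translation), every neighbourhood of $x_{0}$ meets infinitely many $F_{n}$.

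Finally, $\mathbf{G}_{0}$ is a second countable metric space, so $A(\mathbf{G}_{0})$ is a paracompact $\aleph_{0}$-space and hence so is its closed subspace $A_{3}(\mathbf{G}_{0})$; by \cite[Proposition 2.9.2]{B2016} together with normality, the compact-finite family $\{F_{n}\}$ upgrades to a strictly compact-finite one, yielding a strict $\mathrm{Cld}^{\omega}$-fan. Lemma~\ref{strict 1} then concludes that $A_{3}(\mathbf{G}_{0})$ is not a $k_{R}$-space. The principal difficulty is the weight mismatch that rules out accumulation at $0$; once $x_{0}$ is identified as the correct weight-$1$ accumulation point, the key calculation $z_{n,k}-x_{0}\in W_{2}(U)$ runs in parallel with that of Proposition~\ref{M3}.
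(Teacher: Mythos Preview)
Your proof is correct and takes essentially the same route as the paper: the family $\{F_{n}\}$ you construct coincides (up to a harmless sign) with the paper's $\{c_{n}-c(n,m)+y(n,m):m\in\N\}$, and both arguments show it is a strict $\mathrm{Cld}^{\omega}$-fan accumulating at $x_{0}=y_{0}$, invoking normality together with the $\aleph_{0}$-space property for the strictness upgrade. The only differences are cosmetic: the paper cites \cite[Theorem~3.4]{Y1993} for the non-local-finiteness that you verify directly via $z_{n,k}-x_{0}\in W_{2}(U)$, and it proves closedness of each $F_{n}$ through the closed subgroup $A(\mathrm{supp}\,F_{n})$ rather than your lifting-to-coordinates argument (which, as written, is slightly loose since $i_{3}$ is not perfect, but is immediately repaired by the same support-compactness fact from \cite{AOP1989} you already use for compact-finiteness).
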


\begin{proof}
Let $\mathbf{G}_{0}=\bigoplus\{C_{i}: i\in\mathbb{N}\}\bigoplus P_{0}$, where $C_{i}=\{c(i, m): m\in\mathbb{N}\}\cup\{c_{i}\}$ is a convergent sequence with the limit point $c_{_{i}}$ for each $i\in\mathbb{N}$, and let $\{y_{0}\}\cup\{y(n, m): n, m\in\mathbb{N}\}$ be a closed copy of $P_{0}$ in $\mathbf{G}_{0}$, where the set $\{y(n, m): m\in\mathbb{N}\}$ is discrete and open in $\mathbf{G}_{0}$ for each $n\in\mathbb{N}$. Since $A_{3}(\mathbf{G}_{0})$ is a normal $\aleph_{0}$-space, it follows from \cite[Proposition 2.9.2]{B2016} that each compact-finite family is strongly compact-finite, hence by the normality the compact-finite family is also strictly compact-finite. Therefore, it suffices to prove that $A_{3}(\mathbf{G}_{0})$ contains a Cld$^{\omega}$-fan.

\smallskip
For each $n\in\mathbb{N}$, put $$F_{n}=\{c_{n}-c(n, m)+y(n, m): m\in\mathbb{N}\}.$$
We claim that the family $\{F_{n}\}$ is a Cld$^{\omega}$-fan in $A_{3}(\mathbf{G}_{0})$. We divide the proof into the following statements.

\smallskip
(1) Each $F_{n}$ is closed in $A_{3}(\mathbf{G}_{0})$.

\smallskip
Fix an arbitrary $n\in\mathbb{N}$. Indeed, let $$X_{n}=\mbox{supp}(F_{n})=C_{n}\cup\{y(n, m): m\in\mathbb{N}\}.$$ Clearly, $X_{n}$ is closed in $\mathbf{G}_{0}$, thus $A(X_{n})$ is topologically isomorphic to a closed subgroup of $A(\mathbf{G}_{0})$. Hence $A_{3}(X_{n})$  is closed in $A_{3}(\mathbf{G}_{0})$. Since $F_{n}\subset A_{3}(X_{n})$, it suffices to prove that $F_{n}$ is closed in $A_{3}(X_{n})$. By \cite[Theorem 4.5]{Y1998}, $A_{3}(X_{n})$ is metrizable.  Assume to the contrary that there exists a $g\in A_{3}(X_{n})$ such that $g\in \overline{F_{n}}^{A_{3}(X_{n})}\setminus F_{n}$, then there exists a sequence $\{c_{n}-c(n, m_{k})+y(n, m_{k})\}$ in $F_{n}$ converging to $g$. Since $X_{n}$ is paracompact, the closure of the set $$\mbox{supp}(\{g\}\cup\{c_{n}-c(n, m_{k})+y(n, m_{k}): k\in\mathbb{N}\})$$ is compact. However, the set $$\mbox{supp}(\{g\}\cup\{c_{n}-c(n, m_{k})+y(n, m_{k}): k\in\mathbb{N}\})$$ contains a closed infinite discrete subset $\{y(n, m_{k}): k\in\mathbb{N}\}$, which is a contradiction. Therefore, $F_{n}$ is closed in $A_{3}(X_{n})$ (and thus closed in $A_{3}(\mathbf{G}_{0})$).

\smallskip
(2) The family $\{F_{n}\}$ is compact-finite in $A_{3}(\mathbf{G}_{0})$.

\smallskip
Assume to the contrary that there exist a compact subset $K$ in $A_{3}(\mathbf{G}_{0})$ and an increasing sequence $\{n_{i}: i\in\mathbb{N}\}$ in $\mathbb{N}$ such that $K\cap F_{n_{i}}\neq\emptyset$ for each $i\in\mathbb{N}$. For each $i\in\mathbb{N}$, choose a point $c_{n_{i}}-c(n_{i}, m(i))+y(n_{i}, m(i))$. Moreover, since $A_{3}(\mathbf{G}_{0})$ is paracompact, the closure of the set $\mbox{supp}(K)$ is compact. However, the set $\{c_{n_{i}}: i\in\mathbb{N}\}\subset\mbox{supp}(K)$, which is a contradiction.

\smallskip
(3)  The family $\{F_{n}\}$ is not locally finite at the point $y_{0}$ in $A_{3}(\mathbf{G}_{0})$.

\smallskip
Clearly, it suffices to prove that $y_{0}\in \overline{\bigcup_{n\in\mathbb{N}} F_{n}}\setminus\bigcup_{n\in\mathbb{N}} F_{n}$. Indeed, this was proved in \cite[Theorem 3.4]{Y1993}.
\end{proof}

\begin{theorem}\label{A4}
If $X$ is a metrizable space, then the following statements are equivalent:
\begin{enumerate}
\item $A_{n}(X)$ is a $k$-space for each $n\in\mathbb{N}$;
\item $A_{4}(X)$ is a $k$-space;
\item each $A_n(X)$ is a $k_{R}$-space;
\item $A_4(X)$ is a $k_{R}$-space;
\item either $X$ is locally compact and the set $\mbox{NI}(X)$ is separable, or $\mbox{NI}(X)$ is compact.
\end{enumerate}
\end{theorem}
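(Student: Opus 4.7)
The plan is to close the cycle of implications by handling the routine pieces first and then focusing all effort on the single nontrivial implication $(4)\Rightarrow(5)$. The implications $(1)\Rightarrow(2)$ and $(3)\Rightarrow(4)$ are immediate. Since $X$ is metrizable, $A(X)$ is stratifiable (by \cite{S1993}), hence each $A_{n}(X)$ is a stratifiable Tychonoff space, and every Tychonoff $k$-space is a $k_{R}$-space; this gives $(1)\Rightarrow(3)$ and $(2)\Rightarrow(4)$. Finally, $(5)\Rightarrow(1)$ is exactly Yamada's theorem in \cite{Y1993}. Thus the entire theorem reduces to showing $(4)\Rightarrow(5)$.

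To prove $(4)\Rightarrow(5)$, assume that $A_{4}(X)$ is a $k_{R}$-space. I will establish two forbidden-subspace conditions on $X$:

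\textbf{(a)} $X$ contains no closed copy of $\mathbf{G}_{1}$. Indeed, if $Y\subset X$ were such a copy, then since $X$ is metrizable, Uspenskij's theorem \cite{U1991} identifies $A(Y)$ with a closed subgroup of $A(X)$; in particular $A_{4}(\mathbf{G}_{1})$ embeds as a closed subspace of $A_{4}(X)$. Since $A_{4}(X)$ is stratifiable and a $k_{R}$-space, \cite[Proposition~5.10]{BG2014} forces the closed subspace $A_{4}(\mathbf{G}_{1})$ to be a $k_{R}$-space as well, which contradicts Proposition~\ref{M3}.

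\textbf{(b)} $X$ contains no closed copy of $\mathbf{G}_{0}$. The argument is the same: a closed copy $Y\cong\mathbf{G}_{0}$ would yield (via \cite{U1991}) a closed embedding $A_{3}(\mathbf{G}_{0})\hookrightarrow A_{3}(X)\subset A_{4}(X)$, and hence $A_{3}(\mathbf{G}_{0})$ would be a $k_{R}$-space by \cite[Proposition~5.10]{BG2014}, contradicting Proposition~\ref{M2}.

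To finish, I appeal to Yamada's analysis in \cite{Y1993}, which shows that for a metrizable space $X$ the failure of condition (5) forces the existence of a closed copy of either $\mathbf{G}_{0}$ or $\mathbf{G}_{1}$ inside $X$ (one uses that $\mbox{NI}(X)$ is closed in the metrizable space $X$, and splits on whether $\mbox{NI}(X)$ is countable/separable and whether $X$ is locally compact at points of $\mbox{NI}(X)$; in each bad case a discrete family of non-isolated points plus convergent sequences built at them realizes $\mathbf{G}_{0}$ or $\mathbf{G}_{1}$ as a closed subspace). Combining this with (a) and (b) yields (5). The main obstacle is purely bookkeeping: tracking that Uspenskij's closed-subgroup theorem applies at level $A_{n}$ and that the stratifiability-based descent to closed subspaces via \cite[Proposition~5.10]{BG2014} is legitimate; once these are in hand, the heavy $k_{R}$-specific work has already been carried out in Propositions~\ref{M2} and \ref{M3}, and the remainder is a direct citation of Yamada's dichotomy.
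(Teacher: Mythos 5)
Your proposal is correct and takes essentially the same route as the paper: both reduce the theorem to the implication (4)$\Rightarrow$(5) (the rest being trivial implications plus Yamada's equivalence of (1), (2), (5)), and derive (4)$\Rightarrow$(5) from Propositions~\ref{M3} and~\ref{M2} together with Uspenskij's closed-embedding theorem \cite{U1991} and the fact that closed subspaces of stratifiable $k_{R}$-spaces are $k_{R}$ \cite[Proposition 5.10]{BG2014}. The only cosmetic difference is that you cite Yamada's analysis for the dichotomy that failure of (5) in a metrizable space yields a closed copy of $\mathbf{G}_{1}$ or $\mathbf{G}_{0}$, whereas the paper carries out these constructions itself (the $\mathbf{G}_{1}$/separability part via Claim~1 in the proof of Theorem~\ref{k-space-characterization}, and the $\mathbf{G}_{0}$ part explicitly).
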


\begin{proof}
The equivalences of (1), (2) and (5) were proved in \cite[Theorem 4.2]{Y1993}. Clearly, (1) $\Rightarrow$ (3) and (3) $\Rightarrow$ (4). It suffices to prove (4) $\Rightarrow$ (5).

Assume that $A_4(X)$ is a $k_{R}$-space. By Claim 1 of the proof in Theorem~\ref{k-space-characterization}, we see that $\mbox{NI}(X)$ is separable. Assume that $X$ is not locally compact and $\mbox{NI}(X)$ is not compact. Then we can take an infinite discrete sequence $\{c_{n}\in \mbox{NI}(X): n\in\mathbb{N}\}$ in $X$. For each $n\in\mathbb{N}$, since $c_{n}\in \mbox{NI}(X)$, there exists a convergent sequence $\{c(n, m): m\in\mathbb{N}\}$ in $X$ which converges to $c_{n}$, and put $$C_{n}=\{c(n, m): m\in\mathbb{N}\}\cup\{c_{n}\}.$$ Moreover, since $X$ is not locally compact, there exists a closed copy of $P_{0}$ in $X$. Let $$\{y_{0}\}\cup\{y(n, m): n, m\in\mathbb{N}\}$$ be a closed copy of $P_{0}$ in $X$, where the set $\{y(n, m): m\in\mathbb{N}\}$ is discrete and open in $\mathbf{G}_{0}$ for each $n\in\mathbb{N}$. Without loss of generality, we may assume that the collection $$\{\{y_{0}\}\cup\{y(n, m): n, m\in\mathbb{N}\}\}\cup\{C_{n}: n\in\mathbb{N}\}$$ is discrete in $X$. Let $$Y=\{y_{0}\}\cup\{y(n, m): n, m\in\mathbb{N}\}\cup\bigcup\{C_{n}: n\in\mathbb{N}\}.$$ Then $Y$ is homeomorphic to $\mathbf{G}_{0}$. Hence, by Proposition~\ref{M2}, $A_{3}(\mathbf{G}_{0})$ is not a $k_{R}$-space, which shows that $A_{3}(Y)$ is not a $k_{R}$-space. Since $Y$ is closed in $X$ and $X$ is metrizable, $A_{3}(Y)$ is embedded into $A_{3}(X)$ as a closed subspace. Since $A(X)$ is stratifiable, it follows from \cite[Proposition 5.10]{BG2014} that $A_{3}(X)$ is not a $k_{R}$-space. Then, by the same fact, $A_{4}(X)$ is not a $k_{R}$-space, which is a contradiction.
\end{proof}

By the proof of Theorem~\ref{A4}, we have the following theorem.

\begin{theorem}\label{A3}
If $X$ is a metrizable space, then the following statements are equivalent:
\begin{enumerate}
\item $A_{3}(X)$ is a $k$-space;
\item $A_{3}(X)$ is a $k_{R}$-space;
\item either $X$ is locally compact or $\mbox{NI}(X)$ is compact.
\end{enumerate}
\end{theorem}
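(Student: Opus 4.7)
The plan is to run the cyclic chain $(1) \Rightarrow (2) \Rightarrow (3) \Rightarrow (1)$. The implication $(1) \Rightarrow (2)$ is immediate, since every Tychonoff $k$-space is a $k_{R}$-space. The implication $(3) \Rightarrow (1)$ I intend to cite from K.~Yamada's work \cite{Y1993}, where the analogous $k$-space characterization of $A_{3}(X)$ for metrizable $X$ is essentially obtained; this parallels the way the equivalence of (1), (2) and (5) in Theorem~\ref{A4} is credited to \cite[Theorem~4.2]{Y1993}.

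The substantial step is $(2) \Rightarrow (3)$, which I would handle by contradiction, mimicking the construction already carried out in $(4) \Rightarrow (5)$ of Theorem~\ref{A4}. Suppose $A_{3}(X)$ is a $k_{R}$-space, yet $X$ is not locally compact and $\mbox{NI}(X)$ is not compact. Since $\mbox{NI}(X)$ is a closed, non-compact subset of the metric space $X$, it fails to be countably compact, so one can extract a closed discrete sequence $\{c_{n}:n\in\mathbb{N}\}\subset\mbox{NI}(X)$; for each $n$, pick a nontrivial sequence $\{c(n,m):m\in\mathbb{N}\}\subset X\setminus\{c_{n}\}$ converging to $c_{n}$, and set $C_{n}=\{c(n,m):m\in\mathbb{N}\}\cup\{c_{n}\}$. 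The failure of local compactness of $X$ at some point yields a closed copy $P=\{y_{0}\}\cup\{y(n,m):n,m\in\mathbb{N}\}$ of $P_{0}$ in $X$. Standard shrinking of open balls in a metric space allows one to arrange that the collection $\{P\}\cup\{C_{n}:n\in\mathbb{N}\}$ is pairwise disjoint and discrete in $X$, so that $Y=P\cup\bigcup_{n\in\mathbb{N}}C_{n}$ is a closed subspace of $X$ that is homeomorphic to $\mathbf{G}_{0}$.

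Now, since $X$ is metrizable and $Y$ is closed in $X$, the theorem of Uspenskii \cite{U1991} embeds $A_{3}(Y)$ as a closed subspace of $A_{3}(X)$. Because $X$ is metrizable, $A(X)$ is stratifiable by Sipacheva \cite{S1993}, and \cite[Proposition~5.10]{BG2014} then guarantees that closed subspaces of $A_{3}(X)$ inherit the $k_{R}$-property. Consequently $A_{3}(\mathbf{G}_{0})\cong A_{3}(Y)$ would be a $k_{R}$-space, in direct contradiction with Proposition~\ref{M2}. This gives $(2)\Rightarrow(3)$ and closes the cycle.

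The main technical point is the construction of a closed copy of $\mathbf{G}_{0}$ inside $X$ from the joint failure of local compactness and the non-compactness of $\mbox{NI}(X)$; however, this is a routine metric-topology argument already implicit in the proof of Theorem~\ref{A4}. Everything else is either bookkeeping (the trivial implication $(1)\Rightarrow(2)$), a citation (Yamada's theorem for $(3)\Rightarrow(1)$), or a direct appeal to Proposition~\ref{M2}, so no genuinely new obstacle is expected.
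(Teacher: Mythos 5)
Your proposal is correct and follows essentially the same route as the paper: the paper disposes of the $k$-space characterization by citing Yamada \cite[Theorem 4.9]{Y1993}, notes (1) $\Rightarrow$ (2) is obvious, and obtains (2) $\Rightarrow$ (3) exactly by the construction you describe, i.e.\ by the argument of Theorem~\ref{A4} (building a closed copy of $\mathbf{G}_{0}$ from the failure of (3), embedding $A_{3}(\mathbf{G}_{0})$ as a closed subspace, and contradicting Proposition~\ref{M2} via stratifiability and \cite[Proposition 5.10]{BG2014}). Your write-up merely makes explicit what the paper compresses into ``the proof of Theorem~\ref{A4} implies (2) $\Rightarrow$ (3)''.
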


\begin{proof}
The equivalence of (1) and (3) was proved in \cite[Theorem 4.9]{Y1993}, and (1) $\Rightarrow$ (2) is obvious. The proof of Theorem~\ref{A4} implies (2) $\Rightarrow$ (3).
\end{proof}

For each space $X$, the subspace $A_{3}(X)$ contains a closed copy of $X\times X$ by Lemma~\ref{c1}, hence it follows from Theorems~\ref{La} and ~\ref{A3} that we have the following corollary.

\begin{corollary}\label{c11}
Let $X$ be a stratifiable $k$-space with a compact-countable $k$-network. If $A_{3}(X)$ is a $k_{R}$-space, then $X$ satisfies one of the following (a)-(c):

(a) $X$ is a locally compact metrizable space;

(b) $X$ is a metrizable space with the set of all non-isolated points being compact;

(c) $X$ is the topological sum of $k_\omega$-subspaces.
\end{corollary}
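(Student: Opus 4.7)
The plan is to chain together Lemma~\ref{c1}, Theorem~\ref{La}, and Theorem~\ref{A3}, with the stratifiability of $A(X)$ serving as the glue that transports the $k_R$-property between relevant subspaces.

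First, I would observe that since $X$ is stratifiable, the free Abelian topological group $A(X)$ is stratifiable as well (cited earlier via \cite{S1993}), so its closed subspace $A_{3}(X)$ is stratifiable. Then, by Lemma~\ref{c1} applied with $n=2$, the product $X\times X$ embeds as a closed subspace of $A_{3}(X)$. Because the $k_{R}$-property is inherited by closed subspaces of stratifiable $k_{R}$-spaces (via \cite[Proposition 5.10]{BG2014}, as already used repeatedly in the paper), $X^{2}$ is a $k_{R}$-space.

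Next, since $X$ is assumed to be a stratifiable $k$-space with a compact-countable $k$-network and $X^{2}$ is a $k_{R}$-space, Theorem~\ref{La}(1) applies directly, forcing the dichotomy that either $X$ is metrizable, or $X$ is the topological sum of $k_{\omega}$-subspaces. In the latter situation, conclusion (c) holds and we are done. In the former situation, $X$ is a metrizable space for which $A_{3}(X)$ is a $k_{R}$-space, so Theorem~\ref{A3} yields that $X$ is locally compact or $\mathrm{NI}(X)$ is compact, giving conclusion (a) or (b) respectively.

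There is no serious obstacle here: the argument is essentially a bookkeeping exercise collecting the previously established tools. The only point one needs to be a little careful about is justifying that $X^{2}$ inherits the $k_{R}$-property from $A_{3}(X)$, which relies on the stratifiability of $A(X)$ and hence of its closed subspace $A_{3}(X)$, so that \cite[Proposition 5.10]{BG2014} is legitimately applicable to the closed embedding provided by Lemma~\ref{c1}.
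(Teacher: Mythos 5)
Your proposal is correct and follows essentially the same route as the paper: Lemma~\ref{c1} (with $n=2$) gives a closed copy of $X^{2}$ in $A_{3}(X)$, stratifiability of $A(X)$ plus \cite[Proposition 5.10]{BG2014} transfers the $k_{R}$-property to $X^{2}$, and then Theorem~\ref{La} and Theorem~\ref{A3} yield the trichotomy (a)--(c). Your write-up merely makes explicit the closed-subspace inheritance step that the paper leaves implicit.
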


\begin{theorem}\label{bw1}
Assume $\mathfrak b=\omega_1$. Let $X$ be a stratifiable $k$-space with a compact-countable $k$-network. If $A_{3}(X)$ is a $k_{R}$-space, then $A_{3}(X)$ is a $k$-space.
\end{theorem}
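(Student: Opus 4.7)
The plan is to leverage Corollary~\ref{c11}, which applies since $A_3(X)$ is a $k_R$-space and $X$ is a stratifiable $k$-space with a compact-countable $k$-network: $X$ must fall into one of three possibilities---(a) a locally compact metrizable space, (b) a metrizable space with compact $\mbox{NI}(X)$, or (c) a topological sum of $k_\omega$-subspaces. In cases (a) and (b) the space $X$ is metrizable and satisfies condition~(3) of Theorem~\ref{A3}, so that theorem directly gives that $A_3(X)$ is a $k$-space; the set-theoretic assumption $\mathfrak b=\omega_1$ is not needed here, and all of the real work lies in case (c).

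In case (c), write $X=\bigoplus_{\alpha\in A}X_\alpha$ with each $X_\alpha$ a $k_\omega$-subspace. Because every metrizable $k_\omega$-space is locally compact separable metrizable, a topological sum of metrizable $k_\omega$-spaces is already locally compact and metrizable; so if all $X_\alpha$ were metrizable, $X$ would be in case (a). Thus some $X_{\alpha_0}$ is non-metrizable. The strategy is to prove that at most countably many $X_\alpha$ are non-discrete. Granting this, one has $X=Y_0\oplus D$ where $Y_0$ is a countable topological sum of $k_\omega$-spaces (hence itself a $k_\omega$-space) and $D$ is a discrete space; exactly as in (5)$\Rightarrow$(1) of Theorem~\ref{k-space-characterization}, $A(X)\cong A(Y_0)\times A(D)$, with $A(Y_0)$ a $k_\omega$-space (by \cite[Theorem~7.4.1]{AT2008}) and $A(D)$ discrete, so $A(X)$ is a $k$-space and the closed subspace $A_3(X)$ is also a $k$-space.

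To prove the countability, suppose for contradiction that $\{X_{\alpha_\xi}:\xi<\omega_1\}$ is an uncountable family of non-discrete summands. In each $X_{\alpha_\xi}$ pick, by sequentiality, a non-trivial sequence $C_{\alpha_\xi}=\{c(\alpha_\xi,n)\}_{n\in\omega}$ converging to some non-isolated $x_{\alpha_\xi}\in X_{\alpha_\xi}$. Moreover, by non-metrizability of $X_{\alpha_0}$, the standard dichotomy used in the proof of Theorem~\ref{La} (via \cite[Lemma~8]{YanLin1999} and \cite[Corollary~3.10]{Lin1997}) produces a closed copy of $S_\omega=\{a_0\}\cup\{a(n,m):n,m\in\omega\}$ in $X_{\alpha_0}$. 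Using $\mathfrak b=\omega_1$, fix an unbounded family $\{f_\xi\}_{\xi<\omega_1}\subseteq{}^\omega\omega$ as in the proof of Theorem~\ref{BF}. The plan is to translate the convergent sequences $C_{\alpha_\xi}$ to the neutral element and assemble, inside $A_3(X)$, the family
\[
G_\xi=\bigl\{a(n,m)+c(\alpha_\xi,m)-x_{\alpha_\xi}:m\le f_\xi(n),\ n\in\omega\bigr\}\subseteq A_3(X)\setminus A_2(X),
\]
so that each $c(\alpha_\xi,m)-x_{\alpha_\xi}\to 0$ as $m\to\infty$ in the universal uniformity on $X$, and each $G_\xi$ is a clopen discrete subset of the open set $A_3(X)\setminus A_2(X)$. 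One then verifies, along the lines of Propositions~\ref{M3} and~\ref{M2} (invoking normality of $A_3(X)$ inherited from stratifiability, and the fact that the support of a compact subset of $A_3(X)$ is compact in $X$), that $\{G_\xi\}_{\xi<\omega_1}$ is a strict $\mathrm{Cld}^\omega$-fan in $A_3(X)$ accumulating at $a_0$. By Lemma~\ref{strict 1}, this contradicts $A_3(X)$ being a $k_R$-space.

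The main obstacle is verifying that $\{G_\xi\}$ is a strict $\mathrm{Cld}^\omega$-fan, in particular that it is compact-finite (which should follow from compactness of supports of compact subsets of $A_3(X)$, giving only finitely many $C_{\alpha_\xi}$ meeting a given compact set) and not locally finite at $a_0$ (which is where $\mathfrak b=\omega_1$ is essential, via the unboundedness of $\{f_\xi\}$ combined with the uniform convergence $c(\alpha_\xi,m)-x_{\alpha_\xi}\to 0$, exactly mirroring the argument for $(a_0,\infty)\in\overline{\bigcup G_\alpha}$ in the proof of Theorem~\ref{BF}). The delicate point is to convert the $\mathbf{G}_1$-structure $\{C_{\alpha_\xi}\cup\{x_{\alpha_\xi}\}\}$, whose limit points are distinct in $X$, into an $S_{\omega_1}$-like structure with a single accumulation point in $A_3(X)$; this is precisely what the free-group operation of subtracting $x_{\alpha_\xi}$ accomplishes. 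Once the fan construction is verified, Lemma~\ref{strict 1} closes the contradiction and the decomposition from the second paragraph finishes the proof.
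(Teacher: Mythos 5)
Your overall skeleton matches the paper's: dispose of the metrizable case by Theorem~\ref{A3}, reduce the remaining case (a sum of $k_\omega$-subspaces) to showing that only countably many summands are non-discrete (equivalently, that $\mbox{NI}(X)$ is $\omega_1$-compact), and derive a contradiction from an uncountable family of convergent sequences by building a strict $\mathrm{Cld}$-fan in $A_3$ of a closed subspace of the form (copy of a fan-like space)$\,\oplus\,\mathbf{G}_1$, using $\mathfrak b=\omega_1$ exactly as in Theorem~\ref{BF}; the trick of subtracting the limit points $x_{\alpha_\xi}$ to glue the sequences at $0$ is also the paper's.

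The genuine gap is your claim that non-metrizability of $X_{\alpha_0}$ produces a closed copy of $S_\omega$. The dichotomy you invoke (via \cite[Lemma 8]{YanLin1999} and \cite[Corollary 3.10]{Lin1997}) only yields a closed copy of $S_\omega$ \emph{or} of the Arens space $S_2$, since $X$ is merely a $k$-space, not Fr\'echet--Urysohn; and the $S_2$ case cannot be reduced to yours, because $S_2$ contains no copy of $S_\omega$ at all (every sequence converging to the vertex of $S_2$ is eventually in the first level, so the union of the legs of any would-be $S_\omega$ centered there would cluster at the vertex). This is precisely the paper's Case 1, which is the technical heart of the proof: there the fan $F_{n,m}=\{x_n(m)+c(\alpha,n)-c_\alpha: m\le f_\alpha(n)\}$ is built from the two-level structure of $S_2$, and the failure of local finiteness must be verified at the $S_2$-vertex $\infty$ (not at $0$ or at a point like $a_0$) by writing down an explicit base of the universal uniformity on $X_1\oplus\mathbf{G}_1$ and analyzing the neighborhoods $(\infty+U)\cap A_3(X_2)$; none of this is covered by your construction, and the quotient $S_2\to S_\omega$ does not help since you need the fan inside $A_3$ of a closed subspace of $X$ itself. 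A secondary, repairable difference: your fan is $\omega_1$-indexed (one set $G_\xi$ per $\alpha_\xi$, accumulating at $a_0$), whereas the paper uses a countable fan indexed by pairs $(n,m)$ with each set ranging over all $\alpha$; your version can likely be made to work in the $S_\omega$ case (after, e.g., replacing each $f_\xi$ by an increasing majorant so the index bookkeeping in the non-local-finiteness argument goes through), but closedness of each $G_\xi$ and strictness are only asserted, while the paper proves these via supports and the embedding $A(Z)\hookrightarrow A(X)$ for closed $Z$. As it stands, the missing $S_2$ case leaves the proof incomplete.
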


\begin{proof}
Since $A_{3}(X)$ is a $k_{R}$-space, it follows from Lemma~\ref{c1} that $X^{2}$ is a $k_{R}$-space. By Theorem~\ref{La}, either $X$ is metrizable or $X$ is the topological sum of $k_\omega$-subspaces. If $X$ is a metrizable space, then it follows from Theorem~\ref{A3} that $A_{3}(X)$ is a $k$-space. Now we may assume that $X$ is a non-metrizable space being the topological sum of $k_\omega$-subspaces. Moreover, by the assumption of $\mathfrak b=\omega_1$, there exists a collection $\{f_\alpha\in ^\omega\omega: \alpha<\omega_1\}$ such that if $f\in ^{\omega}\omega$, then there exists $\alpha<\omega_1$ with $f_\alpha(n)>f(n)$ for infinitely may $n\in \omega$. Now we shall prove that $A_{3}(X)$ is a $k$-space.

Indeed, it suffices to prove that NI($X$) is $\omega_{1}$-compact. Assume to the contrary that the subspace $\mbox{NI}(X)$ is not $\omega_1$-compact. Since $X$ is sequential, we can see that $X$ contains a closed copy of $\mathbf{G}_{1}=\bigoplus\{C_\alpha: \alpha<\omega_1\}$, where for each $\alpha\in\omega_1$ the set $$C_\alpha=\{c(\alpha, n): n\in \omega\}\cup \{c_\alpha\}$$ and $c(\alpha, n)\to c_\alpha$ as $n\rightarrow\infty$. Next we divide the proof into the following two cases.

\smallskip
{\bf Case 1} The space $X$ contains no closed copy of $S_{\omega}$.

\smallskip
If $X$ contains no closed copy of $S_{2}$. Then it follows from \cite[Corollaries 2.13 and 3.10]{Lin1997} and \cite[Lemma 8]{YanLin1999} that $X$ has a point-countable base, thus $X$ is metrizable since a stratifiable space with a point-countable base is metrizable \cite{Gr1984}, which is a contradiction. Therefore, we may assume that $X$ contains a closed copy of $S_{2}$. Put
$$X_{1}=\{\infty\}\cup\bigcup_{n\in\omega}D_{n},$$where $D_{n}=\{x_{n}\}\cup\{x_{n}(m): m\in
\omega\}$ for each $n\in\omega$. For each $n, k\in\omega$, put $$D_{n}^{k}=\{x_{n}\}\cup\{x_{n}(m): m>k\}.$$  We endow $X_{1}$ with a topology as follows: each
$x_{n}(m)$ is isolated; the family $\{D_{n}^{k}\}$ is a neighborhood base at the point $x_{n}$ for each $n\in\omega$;
a basic neighborhood of $\infty$ is $$N(f, F)=\{\infty\}\cup \bigcup\{D_{n}^{f(n)}: n\in\omega-F\},$$ where $f\in ^{\omega}\omega$ and $F\in\mathscr{F}$. Then $X_{1}$ is a closed copy of $S_{2}$. Moreover, without loss of generality, we may assume that $X_{1}\subset X$ and $X_{1} \cap \mathbf{G}_{1}=\emptyset$. Let $X_{2}=X_{1}\cup \mathbf{G}_{1}$.

For arbitrary $n, m\in\omega$ and $\beta\in\omega_{1}$, let $$F_{n, m}=\{x_{n}(m)+c(\alpha, n)-c_\alpha: m\leq f_\alpha(n), \alpha\in \omega_{1}\},$$ $$V_{n, m}=\{x_{n}(m)+c(\alpha, n)-x: m\leq f_\alpha(n), \alpha\in \omega_{1}, x\in C_{\alpha}^{n}\},$$and $$C_{\beta}^{n}=C_{\beta}\setminus\{c(\beta, k): k\leq n\}.$$

Since $X$ is stratifiable, it follows from \cite{S1993} that $A(X)$ is stratifiable. Then it follows from
\cite[Proposition 5.10]{BG2014} and \cite{U1991} that $A_{3}(X_{2})$ is a $k_{R}$-subspace. However, we shall claim that the family $\{F_{n, m}\}$ is a  Cld$^{\omega}$-fan in $A_{3}(X_{2})$ and the family $\{V_{n, m}\}$ is compact-finite in $A_{3}(X_{2})$; then since $A_{3}(X_{2})$ is normal, it follows that the family $\{F_{n, m}\}$ is a strict Cld$^{\omega}$-fan in $A_{3}(X_{2})$, which is a contradiction. We divide the proof into the following three statements.

\smallskip
(1) Each $F_{n, m}$ is closed in $A_{3}(X_{2})$.

\smallskip
Fix arbitrary $n, m\in\omega$, and let $X_{n, m}=\mbox{supp}(F_{n, m})$. Obviously, $X_{n, m}$ is a closed discrete subspace of $X_{2}$. By an argument similar to the proof of (2) in Proposition~\ref{M3}, $F_{n, m}$ is closed in $A_{3}(X_{2})$.

\smallskip
(2) The family $\{V_{n, m}\}$ is compact-finite in $A_{3}(X_{2})$.

\smallskip
Since the proof is similar to (1) in Proposition~\ref{M3}, we omit it.

\smallskip
(3) The family $\{F_{n, m}\}$ is not locally finite in $A_{3}(X_{2})$.

\smallskip
It suffices to prove that the family $\{F_{n, m}\}$ is not locally finite at the point $\infty$ in $A_{3}(X_{2})$. We give a uniform base $\mathcal{U}$ of the universal uniformity on $X_{2}$ as follows.
For each $\alpha<\omega_1$ and $n, k\in\omega$, let $$W_{k, n}=(D_{n}^{k}\times D_{n}^{k})\cup\Delta_{x_{n}} \mbox{and}\ U_{k, \alpha}=(C_{\alpha}^{k}\times C_{\alpha}^{k})\cup\Delta_\alpha,$$ where $\Delta_{x_{n}}$ and $\Delta_\alpha$ are the diagonals of $D_n\times D_n$ and $C_\alpha\times C_\alpha$ respectively. For each $f\in ^\omega\omega$, $g\in ^{\omega_1}\omega$ and $F\in\mathscr{F}$, let $$U(g, f, F) =\bigcup\{U_{g(\alpha), \alpha}: \alpha<\omega_1\}\cup (N(f, F)\times N(f, F))\cup (\bigcup_{n\in\omega}W_{f(n), n}\times W_{f(n), n})\cup \Delta_{X_{2}}.$$ Put $$\mathscr{U}=\{U(g, f, F): g\in ^{\omega_1}\omega, f\in ^\omega\omega, F\in\mathscr{F}\}.$$
Then the family $\mathscr{U}$ is a uniform base of the universal uniformity on the space $X_{2}$. Put $$\mathcal{W}=\{W(P): P\in\mathscr{U}^{\omega}\}.$$ Then it follows from \cite{Y1993} that $\mathcal{W}$ is a neighborhood base at 0 in $A(X_{2})$.

Next we prove that $\infty\in\overline{H}\setminus H$ in $A_{3}(X_{2})$, where $H=\bigcup_{n, m\in\omega}F_{n, m}$. Obviously, the family $\{(\infty+U)\cap A_3(X_{2}): U\in \mathcal{W}\}$ is a neighborhood base at $\infty$ in $A_3(X_{2})$.
We shall prove $(\infty+U)\cap A_3(X_{2})\cap H\neq \emptyset$ for each $U\in \mathcal{W}$, which implies $\infty\in \overline{H}\setminus H$ in $A_3(X_{2})$. Fix an $U\in\mathcal{W}$.  Then there exist a sequence $\{h_{i}\}_{i\in\omega}$ in $^{\omega_1}\omega$, a sequence $\{g_{i}\}_{i\in\omega}$ in $^{\omega}\omega$  and a sequence $\{F_{i}\}_{i\in\omega}$ in $\mathscr{F}$ such that $$U=\{x_1-y_1+x_2-y_2+...+x_n-y_n: (x_i, y_i)\in U(h_i, g_i, F_{i}), i\leq n, n\in \omega\}.$$ Let $$B=\{x'-\infty+x''-y'': (x', \infty)\in N(g_1, F_{1})\times N(g_1, F_{1}), (x'', y'')\in U_{h_{1}(\alpha), \alpha}, \alpha<\omega_1\}.$$ Then $B\subset U$. By the assumption, there exists $\alpha< \omega_1$ such that $f_\alpha (k)\geq g_1(k)$ for infinitely many $k$. Pick a $k'>h_{1}(\alpha)$ such that $k'\not\in F_{1}$ and $(x_{k'} (f_\alpha(k')), \infty)\in N(g_1, F_{1})\times N(g_1, F_{1})$. Then $$x_{k'} (f_\alpha(k'))-\infty+c(\alpha, k')-c_\alpha\in U,$$ hence
\begin{eqnarray}
\infty+x_{k'} (f_\alpha(k'))-\infty+c(\alpha, k')-c_{\alpha}&=&x_{k'} (f_\alpha(k'))+c(\alpha, k')-c_\alpha\nonumber\\
&\in&((\infty+U)\cap A_3(Z))\cap F_{k', f_\alpha(k')}\nonumber\\
&\subset&F_{k', f_\alpha(k')}.\nonumber
\end{eqnarray}

\smallskip
{\bf Case 2} The space $X$ contains a closed copy of $S_{\omega}$.

\smallskip
Then $X$ contains a closed subspace $Y$ which is homeomorphic to $S_\omega$. Moreover, without loss of generality, we may assume that $Y \cap \mathbf{G}_{1}=\emptyset$. Let $Z=Y\cup \mathbf{G}_{1}$.
For arbitrary $n, m\in\omega$, let $$H_{n, m}=\{a(n, m)+c(\alpha, n)-c_\alpha: m\leq f_\alpha(n), \alpha\in \omega_{1}\}.$$

By an argument similar to the proof of Case 1, we can prove that the family $\{H_{n, m}\}$ is a strict Cld$^{\omega}$-fan in the $k_{R}$-subspace $A_{3}(Z)$, which is a contradiction.
\end{proof}

\begin{note}
 By the above proof, it follows that if the space $X$ in Theorem~\ref{bw1} is non-metrizable then $A(X)$ is a $k$-space.
 \end{note}
\maketitle
\section{Open questions}
In this section, we pose some open questions about $k_{R}$-spaces.

In \cite{GMT1}, the authors proved that each countably compact $k$-space with a point-countable $k$-network is compact and metrizable. Therefore, we have the following question.

\begin{question}
Is each countably compact $k_{R}$-space with a point-countable $k$-network metrizable?
\end{question}

In \cite{BG2014}, the authors proved that each closed subspace of a stratifiable $k_{R}$-space is a $k_{R}$-subspace. However, the following question is still open. First, we recall a concept.

 \begin{definition}
A topological space $X$ is a {\it $k$-semistratifiable space} if for each open subset $U$ in $X$, one can assign a sequence $\{U_{n}\}_{n=1}^{\infty}$ of closed subsets in $X$ such that

(a) $U=\bigcup_{n\in\mathbb{N}}U_{n}$;

(b) for each compact subset $K$ with $K\subset U$ there exists $n\in\mathbb{N}$ such that $K\subset U_{n}$;

(c) $U_{n}\subset V_{n}$ whenever $U\subset V$.
\end{definition}

\begin{question}
Is each closed subspace of $k$-semistratifiable $k_{R}$-space a $k_{R}$-subspace?
\end{question}

Moreover, we have the following question:

\begin{question}
Is each closed subgroup of a $k_{R}$-topological group (free Abelian topological group) $k_{R}$?
\end{question}

In \cite{B2016}, the author posed the following question:

\begin{problem}\cite[Problem 3.5.5]{B2016}
Assume that a Tychonoff space contains no Cld-fan. Is $X$ a $k$-space?
\end{problem}

Indeed, the following question is still open.

\begin{question}
Assume that a topological group $G$ contains no Cld-fan. Is $G$ a $k$-space?
\end{question}

Furthermore we have the following question.

\begin{question}
Assume that a topological group $G$ contains no strict Cld-fan. Is $G$ a $k_{R}$-space?
\end{question}

By Theorem~\ref{kr charac}, we have the following question.

\begin{question}
Can we replace ``$k$-space'' with ``$k_{R}$-space'' in the conditions (1), (2) and (3) in Theorem~\ref{kr charac}?
\end{question}

We conjecture the following question is positive.

\begin{question}\label{q2}
Let $X$ and $Y$ be two sequential spaces. If $X\times Y$ is a $k_{R}$-space, is $X\times Y$ a $k$-space?
\end{question}

By Theorem~\ref{bw1}, it is natural to pose the following question.

\begin{question}
Assume $\mathfrak b>\omega_1$. Let $X$ be a stratifiable $k$-space with a compact-countable $k$-network. If $A_{3}(X)$ is a $k_{R}$-space, then is $A_{3}(X)$ a $k$-space?
\end{question}

It is well-known that neither $(S_{2})^{\omega}$ nor $(S_{\omega})^{\omega}$ are $k$-spaces. Hence it is natural to pose the following question:

\begin{question}
Are $(S_{2})^{\omega}$ and $(S_{\omega})^{\omega}$ $k_{R}$-spaces?
\end{question}

{\bf Acknowledgements}. The authors are thankful to the
anonymous referees, as well as to professor Jiling Cao, for valuable remarks and corrections and all other sort of help related to the content of this article.
In particular, the anonymous referees point-out an incorrect proof in Lemma~\ref{M1} in the preliminary version of our paper. 

  %%%%%%%%%%%%%%%%%%%%%%%%%%%%

  \end{document}